\newcommand{\stsets}[1]{\mathbb{#1}}
\newcommand{\R}{\stsets{R}}
\newcommand{\N}{\stsets{N}}
\newcommand{\Z}{\stsets{Z}}
\theoremstyle{plain}
\newtheorem{theorem}{Theorem}[section]
\newtheorem{Prop}{Proposition}[section]
\newtheorem{lemma}[theorem]{Lemma}
\theoremstyle{remark}
\newtheorem{definition}[theorem]{Definition}
\newtheorem{assumption}[theorem]{Assumption}
\renewcommand{\P}{\mathbf{P}}
\DeclareMathOperator{\E}{{\bf E}}
\DeclareMathOperator{\one}{{ 1\hspace*{-0.55ex}I}}
\DeclareMathOperator{\card}{card}
\DeclareMathOperator{\Supp}{Supp}
\DeclareMathOperator*{\argmax}{arg\,max}
\DeclareMathOperator{\TLLN}{TLLN}
\newcommand*\diff{\mathop{}\!\mathrm{d}}
\newcommand{\eps}{\varepsilon}
\renewcommand{\P}{\mathbf{P}}
\renewcommand{\epsilon}{\varepsilon}
\renewcommand{\phi}{\varphi}
\newlength{\querylen}
\def\acknowledgementsname{Acknowledgments}
\newenvironment{acks}[1][\acknowledgementsname]{\section*{#1}}{\par}
    \renewenvironment{acks}[1][\acknowledgementsname]%
        {%
            \vskip0.5\baselineskip
            \small
            {\noindent\normalfont\sffamily\bfseries\acknowledgementsname}\par
            \begingroup\parindent 0pt\parskip 0.5\baselineskip
        }%
        {\endgroup}
\def\fundingname{Funding}
\title{Propagation of chaos and Poisson Hypothesis for replica mean-field models of intensity-based neural networks}
\author{Michel Davydov \thanks{INRIA, Paris, France and D\'epartement d'informatique de l'ENS, ENS, CNRS, PSL University, Paris, France, michel.davydov@inria.fr} }
\date{}
\begin{document}
\maketitle
\begin{abstract}
Neural computations arising from myriads of interactions between spiking neurons can be modeled as network dynamics with punctuate interactions. However, most relevant dynamics do not allow for computational tractability. To circumvent this difficulty, the Poisson Hypothesis regime replaces interaction times between neurons by Poisson processes. We prove that the Poisson Hypothesis holds at the limit of an infinite number of replicas in the replica-mean-field model, which consists of randomly interacting copies of the network of interest. The proof is obtained through a novel application of the Chen-Stein method to the case of a random sum of Bernoulli random variables and a fixed point approach to prove a law of large numbers for exchangeable random variables.
\end{abstract}
\section{Introduction}

Many phenomena in a variety of fields can be modeled as punctuate interactions between agents. Whether it be opinion dynamics \cite{Amblard_2004}, epidemics propagation \cite{Pastor_Satorras_2015}, flow control on the internet \cite{baccelli:2002} or neural computations \cite{Shriki_2013}, an agent-based approach is a versatile way to describe the phenomenon of interest through the behavior of each agent.

In such an approach, each agent is seen as a node in a network in which edges represent the possibility of interactions, and point processes associated to each node register the times at which these interactions happen. These point processes idealize the stochasticity inherent in the phenomena of interest. The state of the system can then be given by a set of stochastic differential equations, each describing the state of an agent. In intensity-based models, used extensively for example in computational neuroscience \cite{truccolo_2005}, \cite{Pillow2008}, this state is given by the stochastic intensity of the point process associated with the agent. Neuronal stochastic intensities model the instantaneous firing rate of a neuron as a function of the spiking inputs received from other neurons.

However, the versatility of this "microscopic" approach comes at a cost, namely, that of computational tractability. Indeed, except for the simplest network architectures, such as systems of 1 or 2 agents, an analytical expression characterizing the law of the GL model in the stationary regime is not in general available. To go beyond numerical simulations, it then becomes imperative to resort to some simplifying assumption.

As the complexity of the dynamics resides in the dependencies between agents due to interactions, it is natural to choose a simplified model in which the agents are considered independent. One such classical approach is called the mean-field regime. Introduced originally by McKean \cite{McKean_66} and developed, among others, by Dobrushin\cite{Dobrushin1979} and Sznitman\cite{Szn:89}, it consists in approximating the interactions received by any one particle by an empirical mean of the interactions over the whole network. The mean-field regime arises at the limit with infinitely many agents, as the empirical mean typically converges to an expectation and gives rise to a nonlinear ordinary differential equation describing the behavior of an agent at a macroscopic level and allowing for tractability. This convergence, when it takes place, is linked to the concept of propagation of chaos, mainly in reference to the asymptotic independence between agents that arises at the limit. 

In classical mean-field models such as \cite{Fournier_Locherbach_2016} or \cite{LeBoudec07}, the network considered must be assumed fully connected, the effect of interactions on the state of a given agent must be small, typically inversely proportional to the number of agents, in order to prevent explosion in finite time in the system. These assumptions represent significant constraints on the architectures and sizes of the networks and thus on the types of phenomena for which a mean-field approximation is relevant.

To circumvent these limitations, different approaches have been explored in recent years. To apply mean-field approximation to small-sized networks (with less than 100 agents, for example), the refined-mean-field approach \cite{Gast_2018}, \cite{gast_allmeier_22} adds a corrective term to the macroscopic ODE. Mean-field models have also been studied in other scalings, for example diffusive, where the effect of interactions on a particle is inversely proportional to the square root of the number of agents in the system. So-called conditional propagation of chaos properties have been proven in that setting \cite{Erny_21}. To incorporate heterogeneity, the properties of graphons (large dense graphs) have been used to derive new limit equations \cite{ZAN_2022}. In this setting, the limit object is an infinite system of ODEs. However, this approach is only valid for dense networks; when the average degree of a node is of order lesser than the amount of nodes in the network, as is the case for example in the human brain, graphon theory does not apply. Another approach to incorporate heterogeneity circumvents mean-field models altogether, relying instead on conditional independence properties and local weak limits to obtain local convergence \cite{Ramanan_2020}.

In this work, we are interested in another tractable regime for agent-based models: the Poisson Hypothesis. First formulated by Kleinrock for large queueing systems \cite{Kleinrock_1}, it states that the flow of arrivals to a given node can be approximated by a Poisson flow with rate equal to the average rate of the original flow of arrivals. In agent-based models, the flow of arrivals corresponds to the effect of interactions on a given node. Under the Poisson Hypothesis, the behavior of each agent is still described by a stochastic differential equation, but the agents are considered independent and interaction times are replaced by Poisson clocks, which allows for tractability. This regime has been studied for queueing models by Rybko, Shlosman and others \cite{Vladimirov_2018} and by Baccelli and Taillefumier for intensity-based models from computational neuroscience \cite{Baccelli_2019}.

Hereafter, we focus on the continuous-time Galves-Löcherbach model introduced in \cite{Galves_2013} and studied under the Poisson Hypothesis in \cite{Baccelli_2019}. We introduce a physical system, called the replica-mean-field, derived from the initial model that converges under a certain scaling to the dynamics under the Poisson Hypothesis. The replica-mean-field was first introduced by Dobrushin to study queueing models\cite{VveDobKar96}, and adapted to a network setting in \cite{Baccelli_2019}. However, in their work, the convergence of the replica-mean-field dynamics to dynamics under the Poisson Hypothesis is only assumed, and not proven. The crux of this article is proving that a propagation of chaos-type convergence does take place for the replica-mean-field model derived from the Galves-Löcherbach model.

In the recent work \cite{Davydov2022}, we have introduced a class of discrete-time processes on a discrete space, called fragmentation-interaction-aggregation processes (FIAPs), that include among others discrete versions of the Galves-Löcherbach model, and we have proven the propagation of chaos property for a replica-mean-field model associated with a FIAP for any finite time. Our aim is to generalize these results to a model in continuous time and with a continuous state space.

\subsection*{Structure of the paper}
After this general introduction, we formally define all the models that we will be considering, namely, the Galves-Löcherbach model, its replica-mean-field version and its dynamics under the Poisson Hypothesis. We then state the main result of the paper, namely propagation of chaos in the replica-mean-field model on compacts of time, which we then prove in Section \ref{sec_PH_proof}. Finally, in Section \ref{sec_Rel_comp_RMF_LGL}, we generalize the main result to weak convergence on the half-line through a tightness argument.

\subsection*{The continuous-time Galves-L\"ocherbach (GL) model}

Let us formally present the Galves-Löcherbach model mentioned above: we consider a finite collection of $K$ neurons whose spiking activities are given by the  realization of a system of simple point processes without any common points $\mathbf{N}=\{N_i\}_{1 \leq i \leq K}$ on $\mathbb{R}$ defined on some measurable space $(\Omega,\mathcal{F})$. For each neuron $1 \leq i \leq K$, we denote by $(T_{i,n})_{n \in \mathbb{Z}}$ the sequence of successive spiking times with the convention that, almost surely, $T_{i,0} \leq 0 < T_{i,1}$ and  $T_{i,n}<T_{i,n+1}$ for all $n$. 

To model the interactions due to spikes within the system, we consider that the spiking rate of neuron $i$ depends on the times at which neuron $i$ and the other neurons $j \neq i$ have spiked in the past. Formally, we introduce the network spiking history  $(\mathcal{F}_t)_{t\in \mathbb{R}}$ as an increasing collection of $\sigma$-fields such that
\begin{equation*}
\mathcal{F}_t^\mathbf{N}=\{\sigma(N_1(B_1),...,N_K(B_K))|B_i\in \mathbf{B}(\mathbb{R}), B_i \subset (-\infty,t]\}\subset \mathcal{F}_t,
\end{equation*}
where $\mathcal{F}_t^\mathbf{N}$ is the internal history of the spiking process $\mathbf{N}$.

Recall that the $\mathcal{F}_t$-stochastic intensity $\{\lambda_i(t)\}_{t \in \mathbb{R}}$ of the associated point process $N_i$ is the $\mathcal{F}_t$-predictable random process satisfying for all $s<t \in \mathbb{R}:$
\begin{equation}
\label{eq_stoch_int}
\E\left[N_i(s,t]|\mathcal{F}_s\right]=\E\left[\int_s^t \lambda_i(u)\diff u\big|\mathcal{F}_s\right],
\end{equation}
where $\mathcal{F}_t$ is the network history. We will hereafter refer to \eqref{eq_stoch_int} as the stochastic intensity property.

We consider the $\mathcal{F}_t$-stochastic intensities $\lambda_1,...,\lambda_K$ associated with the point processes $N_1,...,N_K$. 

In the Galves-L\"ocherbach model, the evolution in time of these intensities is given by the following system of stochastic differential equations:
\begin{equation}
\label{eq_LGL_SDE}
\lambda_i(t)=\lambda_i(0)+\frac{1}{\tau_i}\int_0^t\left(b_i-\lambda_i(s)\right)\diff s+\sum_{j \neq i}\mu_{j\rightarrow i}\int_0^t N_j(\diff s)+\int_0^t\left(r_i-\lambda_i(s)\right) N_i(\diff s),
\end{equation}
where $\tau_i, b_i, r_i >0$ and $\mu_{j\rightarrow i} \geq 0$ are given constants and $\lambda_i(0)$ is assumed to be greater than $r_i$ and $b_i$. 

Let us make each term more explicit. The first integral term shows that without any spikes, the intensity exponentially decays to its base rate $b_i$ with a relaxation time $\tau_i$. The second integral term represents the aggregation of all the spikes received from the other neurons in the system: a spike received from neuron $j$ causes a jump of $\mu_{j\rightarrow i}$ units in the intensity of neuron $i$. Finally, the third integral term is the reset that occurs when neuron $i$ spikes: $\lambda_i$ is then reset to $r_i$, which is a value such that $0 < r_i \leq b_i$. Taking $r_i \leq b_i$ models a refractory period that occurs after a spike during which the neuron enters a transient phase.

It has been shown in \cite{Baccelli_2019} that \eqref{eq_LGL_SDE} defines a piecewise deterministic Markov process. When there is no exponential decay, that is, when $\tau_i=\infty$ for all $i$, the process becomes a pure jump Markov process that is Harris-ergodic and thus has a unique invariant measure. 

It has also been shown that the moment generating function (MGF) at stationarity\\ $u\rightarrow L(u)=\E[\exp(\sum_{i=1}^K u_i\lambda_i)]$ satisfies the following differential equation:
\begin{equation}
\label{eq_MGF_LGL}
\left(\sum_i\frac{u_i b_i}{\tau_i}\right)L-\sum_i\left(1+\frac{u_i}{\tau_i}\right)\partial_{u_i}L+\sum_i e^{(u_ir_i+\sum_{j \neq i}u_j\mu_{j \rightarrow i})}\partial_{u_i}L_{|u_i=0}=0.
\end{equation} 

This equation is not solvable except for some very special cases ($K \leq 2,$ for example). It is thus necessary to use approximating schemes or truncating moments, both of which neglect correlations due to the finite size of the network. Here, we introduce a different physical system that allows to obtain closed forms for equations similar to \eqref{eq_MGF_LGL}.

\subsection*{Replica-mean-fields of GL models}
In replica-mean-field models, we consider $M$ replicas of the initial set of $K$ neurons. When neuron $i$ in replica $m$ spikes, for each neuron $j$ that would receive something from the spike, a replica $n$ is uniformly and independently chosen among the other $M-1$ replicas, and neuron $i$ sends $\mu_{i \rightarrow j}$ to it.

Formally, for $1 \leq m \leq M, 1 \leq i,j \leq K,$ let $\{V^M_{(m,i)\rightarrow j}(t)\}_{t\in \mathbb{R}}$ be $\mathcal{F}^N$-predicatble stochastic processes such that, for each spiking time $T$, i.e., each point of $N^M_{m,i}$, the random variables $\{V^M_{(m,i)\rightarrow j}(T)\}_j$ are mutually independent, independent from the past, and uniformly distributed on $\{1,...,M\}\setminus\{m\}.$
Here, $V^M_{(m,i)\rightarrow j}(T)$ gives the index of the replica to which the spike of neuron $i$ in replica $m$ at time $T$ is sent to neuron $j$.

The stochastic intensities associated with the point processes will then solve the following system of stochastic differential equations:

\begin{equation}
\label{eq_RMF_LGL_SDE}
\begin{split}
\lambda^M_{m,i}(t)&=\lambda^M_{m,i}(0)+\frac{1}{\tau_i}\int_0^t\left(b_i-\lambda^M_{m,i}(s)\right)\diff s \\
&+\sum_{j \neq i}\mu_{j\rightarrow i}\sum_{n \neq m} \int_0^t \one_{\{V^M_{(n,j)\rightarrow i}(s)=m\}} N^M_{n,j}(\diff s)+\int_0^t \left(r_i-\lambda^M_{m,i}(s)\right) N^M_{m,i}(\diff s).
\end{split}
\end{equation}

These equations, which we will hereafter refer to as the RMF dynamics, characterize the dynamics of the $M$-replica system. As before, for all $M$, these dynamics form a pure jump Harris-ergodic Markov process with a unique invariant measure since we are under Assumption \ref{Ass_1}.

The infinitesimal generator of the $M$-replica dynamics is given by 
\begin{equation*}
\begin{split}
\mathcal{A}[f](\mathbf{\lambda})&= \sum_{i=1}^K\sum_{m=1}^M\left(\frac{b_i-\lambda_{m,i}}{\tau_i}\partial_{\lambda_{m,i}} f(\mathbf{\lambda})\right) \\
&+\sum_{i=1}^K\sum_{m=1}^M \frac{1}{|\mathcal{V}_{m,i}|}\sum_{v \in \mathcal{V}^M_{m,i}} \left( f\left(\mathbf{\lambda}+\mathbf{\mu}_{m,i,v}(\mathbf{\lambda})\right)-f(\mathbf{\lambda})\right)\lambda_{m,i},
\end{split}
\end{equation*}
where the update due to the spiking of neuron $(m,i)$ is defined by
\begin{equation*}
\left[\mathbf{\mu}_{m,i,v}(\mathbf{\lambda})\right]_{n,j}= 
\begin{cases}
\mu_{j \rightarrow i} & \text{ if } j \neq i, n=v_j \\
r_i - \lambda_{m,i} & \text{ if } j=i,n=m \\
0 & \text{ otherwise }.
\end{cases}
\end{equation*}
As before, we can establish an equation for the MGF \\
$u\rightarrow L(u)=\E[e^{(\sum_{m=1}^M\sum_{i=1}^K u_{m,i}\lambda_{m,i})}]$ in the stationary regime:
\begin{equation*}
\begin{split}
&\left(\sum_m\sum_i\frac{u_{m,i} b_i}{\tau_i}\right)L-\sum_m\sum_i\left(1+\frac{u_{m,i}}{\tau_i}\right)\partial_{u_{m,i}}L\\
&+\sum_m\sum_i \frac{1}{(M-1)^K}\sum_{v \in \mathcal{V}^M_{m,i}} e^{(u_{m,i}r_i+\sum_{j \neq i}u_{v_j,j}\mu_{j \rightarrow i})}\partial_{u_{m,i}}L_{|u_i=0}=0,
\end{split}
\end{equation*} 
where 
\begin{equation*}
\mathcal{V}^M_{m,i}=\{v\in\{1,...,M\}^K|v_i=m \textrm{ and } v_j \neq m, j\neq i\}.
\end{equation*}

Once again, this equation is not easily solvable. However, a closed form has been obtained in \cite{Baccelli_2019} by setting the Poisson Hypothesis, that is, by considering that at the limit when $M \rightarrow \infty$, the replicas become asymptotically independent and the arrivals process to a given replica becomes asymptotically Poisson.

This hypothesis is often conjectured or numerically validated and not proven, as was the case in \cite{Baccelli_2019}. The aim of this work is to give a proof of the Poisson Hypothesis in the RMF limit in continuous time, by analogy of the work done in discrete time in \cite{Davydov2022}.

In the rest of the paper, we make the following assumptions:
\begin{assumption}
\label{Ass_1}
For all $i \in  \{1,\ldots,K\}$, $\tau_i=\infty$ (no exponential decay).
\end{assumption}

\begin{assumption}
\label{Ass_2}
There exists $\xi_0>0$ such that for all $1\leq m \leq M, 1 \leq i \leq K$ and all $0<\xi\leq \xi_0$, $\E[e^{\xi\lambda_{m,i}(0)}]<\infty$.
\end{assumption}
Assumption \ref{Ass_1} is introduced mainly to simplify notation and reasonings. While we do not rigorously prove it, we strongly believe that all results remain valid without it.
Assumption \ref{Ass_2} restricts the initial conditions of the system to allow for propagation of chaos to take place. We shall see that this assumption allows us to have bounds for the moments of the state process and later to apply Chernoff's inequality at a crucial juncture.

\subsection*{The limit dynamics}
In this section, we aim to define the limit dynamics of the RMF GL model when the number of replicas goes to infinity. As previously mentioned, intuitively, the arrivals from each neuron should become a Poisson process, whereas the reset term should remain unchanged.

As such, we introduce the following system of SDEs which is the natural candidate for the limit dynamics and to which we will hereafter refer to as the limit process. We will denote with tildes everything pertaining to it.
We consider point processes $\Tilde{N}_1,\ldots,\Tilde{N}_K$ on $\R^+$  with respective $(\mathcal{F}_t)$-stochastic intensities $\Tilde{\lambda}_1,\ldots,\Tilde{\lambda}_K$, where $\mathcal{F}_t$ is the internal spiking history of the network defined as previously, verifying the following stochastic differential equations: for $t>0,$ for $1\leq i \leq K,$ 
\begin{equation}
\label{eq_limit_eq_nonexch}
\Tilde{\lambda}_i(t)=\Tilde{\lambda}_i(0)+\sum_{j \neq i}\mu_{j\rightarrow i} \tilde{A}_{j \rightarrow i}(t)+\int_0^t \left(r_i-\Tilde{\lambda}_{i}(s)\right) \tilde{N}_{i}(\diff s),
\end{equation}
where $\tilde{A}_{j \rightarrow i}$ are independent inhomogeneous Poisson point processes with intensities $a_{j}(t)=\int_0^t\E[\tilde{\lambda}_j(s)]\diff s=\E[\tilde{N}_j([0,t])]$ and $(\Tilde{\lambda}_1(0),\ldots,\Tilde{\lambda}_K(0))$ verify Assumption \ref{Ass_2}.

The existence and uniqueness of the solution to this equation comes from the general theory of \cite{Robert2016}, and is derived analogously to the existence and uniqueness of the solution to \eqref{eq_RMF_LGL_SDE}, which is done in \cite{Baccelli_2019}. Note that \eqref{eq_limit_eq_nonexch} is a nonlinear equation in the sense of McKean-Vlasov \cite{McKean_66}, as the process $\Tilde{\lambda}_i$ depends on its own law through the presence of the terms $\E[\tilde{\lambda}_j(t)]$ in the intensities of the Poisson processes.

\subsection*{The main result}
Recall the following definition of convergence in total variation:

\begin{definition}
\label{def_tv_dist}
Let $P$ and $Q$ be two probability measures on a probability space $(\Omega, \mathcal{F})$. We define the total variation distance by
\begin{equation*}
d_{TV}(P,Q)=\sup_{A\in \mathcal{F}}|P(A)-Q(A)|.
\end{equation*}
When $\Omega$ is countable, an equivalent definition is
\begin{equation*}
d_{TV}(P,Q)=\frac{1}{2}\sum_{\omega \in \Omega}|P(\omega)-Q(\omega)|.
\end{equation*}
\end{definition}
Note that certain authors use a multiplicative constant 2 when defining the total variation distance. We will also abusively say that random variables converge in total variation when their distributions do.

The following theorem is the main result of this work:

\begin{theorem}
\label{th_Poisson_CV}
There exists  $T>0$ such that, if $t \in [0,T]$ and if
\begin{equation*}
A^M_{m,i}(t)=\sum_{j \neq i}\mu_{j\rightarrow i} \sum_{n \neq m}  \int_0^t \one_{\{V^M_{(n,j)\rightarrow i}(s)=m\}} N^M_{n,j}(\diff s),
\end{equation*}
with $N^M_{m,i}$ defined in \eqref{eq_RMF_LGL_SDE},
and
\begin{equation*}
\tilde{A}_i(t)=\sum_{j \neq i}\mu_{j\rightarrow i}  \tilde{A}_{j \rightarrow i}(t),
\end{equation*}
with $\tilde{A}_{j \rightarrow i}(t)$ defined in \eqref{eq_limit_eq_nonexch}, then, 
\begin{enumerate}
\item the processes $(\tilde{A}_1,\ldots,\tilde{A}_K)$ are independent, as are the processes $(\tilde{\lambda}_1,\ldots,\tilde{\lambda}_K);$
\item for all $(m,i) \in \{1,\ldots,M\}\times\{1,\ldots,K\}$, the random variable $A^M_{m,i}(t)$ converges in total variation to $\tilde{A}_i(t)$ when $M \rightarrow \infty;$
\item for all $(m,i) \in \{1,\ldots,M\}\times\{1,\ldots,K\}$, the random variable $\lambda^M_{m,i}(t)$ defined by \eqref{eq_RMF_LGL_SDE} converges in total variation to $\Tilde{\lambda}_i(t)$ defined in \eqref{eq_limit_eq_nonexch} when $M \rightarrow \infty;$ 
\item let $\mathcal{N}$ be a finite subset of $\N^*$, for all $i\in \{1,\ldots,K\},$ the processes $(A^M_{m,i}(\cdot))_{m \in \mathcal{N}}$ and $(\lambda^M_{m,i}(\cdot))_{m \in \mathcal{N}}$ weakly converge in the Skorokhod space $D([0,T])^{\card{\mathcal{N}}}$ endowed with the product Skorokhod metric to $\card(\mathcal{N})$ independent copies of the corresponding limit processes $(\tilde{A}_i(\cdot))$ and $(\tilde{\lambda}_i(\cdot))$ when $M \rightarrow \infty$.
\end{enumerate}

\end{theorem}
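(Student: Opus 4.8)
The plan is to establish the four items in order, with (2)--(3) forming the technical core and (1),(4) following. The horizon $T$ is fixed at the end as the largest time up to which the exponential moments $\E[e^{\xi\lambda^M_{m,i}(s)}]$ are bounded uniformly in $M$ (and, by exchangeability, in $m$). Since Assumption~\ref{Ass_1} removes the decay term, the cross-excitation in \eqref{eq_RMF_LGL_SDE} can only blow such moments up in finite time, but a Gronwall argument applied to $s\mapsto\E[e^{\xi\lambda^M_{m,i}(s)}]$, using the compensation $N^M_{n,j}(\diff s)-\lambda^M_{n,j}(s)\diff s$ and Assumption~\ref{Ass_2}, gives uniform control on some $[0,T_1]$, $T_1>0$; the same bound holds for the limit intensities $\tilde\lambda_i$. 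In particular all factorial moments of $N^M_{m,j}([0,t])$ and of $\tilde N_j([0,t])$ are bounded on $[0,T_1]$, which is what will later feed Chernoff's inequality and the Chen--Stein error terms.

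For item (1): once the deterministic Poisson rates $a_j(\cdot)$ are regarded as given, each $\tilde\lambda_i$, together with the point process $\tilde N_i$, is by \eqref{eq_limit_eq_nonexch} a measurable functional of $\tilde\lambda_i(0)$, the Poisson processes $(\tilde A_{j\to i})_{j\ne i}$, and an independent randomization generating $\tilde N_i$ from its intensity. Distinct values of $i$ involve disjoint families of these independent driving data (and independent initial conditions), so $(\tilde A_1,\dots,\tilde A_K)$ and $(\tilde\lambda_1,\dots,\tilde\lambda_K)$ are independent. That the rates $a_j$ are well defined and consistent is the content of the fixed-point map $\Phi$ sending a candidate family $(a_j)_j$ on $[0,T]$ to $\bigl(t\mapsto\E[\tilde N_j([0,t])]\bigr)_j$ computed from the solution of \eqref{eq_limit_eq_nonexch}: the moment bounds make $\Phi$ a self-map of a bounded set, Lipschitz with constant $<1$ for $T\le T_2$, whose unique fixed point is $(a_j)_j$. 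Set $T=\min(T_1,T_2)$.

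For items (2)--(3): fix $(m,i)$ and write $A^M_{m,i}(t)=\sum_{j\ne i}\mu_{j\to i}B^M_{m,i,j}(t)$ with $B^M_{m,i,j}(t)=\sum_{n\ne m}\int_0^t\one_{\{V^M_{(n,j)\to i}(s)=m\}}N^M_{n,j}(\diff s)$. By the defining property of the $V$'s, $B^M_{m,i,j}(t)$ is a sum of a \emph{random} number $S^M_j:=\sum_{n\ne m}N^M_{n,j}([0,t])$ of Bernoulli$(1/(M-1))$ indicators, each independent of the strict past at its spike time, with $\E[S^M_j]=(M-1)\E[N^M_{1,j}([0,t])]$. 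Applying the Chen--Stein method in the form adapted to a random sum of Bernoulli variables bounds $d_{TV}$ between the point process $B^M_{m,i,j}$ on $[0,t]$ and a Poisson process of random intensity $S^M_j/(M-1)$ by terms of expected order $\E[S^M_j]/(M-1)^2=O(1/M)$, together with a term accounting for the predictable, rather than deterministic, character of the thinning intensity, also $O(1/M)$ by the moment bounds. It then remains to replace $S^M_j/(M-1)=\frac{1}{M-1}\sum_{n\ne m}N^M_{n,j}([0,t])$ by the deterministic $a_j(t)$: this is a law of large numbers for the exchangeable family $(N^M_{n,j}([0,\cdot]))_n$, which by exchangeability and the moment bounds reduces to $\mathrm{Cov}(N^M_{1,j}([0,t]),N^M_{2,j}([0,t]))\to0$, while the common expectation is shown to converge to $a_j(t)$ by passing to the limit, using the $O(1/M)$ Chen--Stein bound, in the approximate fixed-point relation satisfied by $t\mapsto\E[N^M_{1,j}([0,t])]$ and invoking uniqueness of the fixed point of $\Phi$ --- the ``fixed-point approach to the law of large numbers'' of the abstract. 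Distinct source neurons $j\ne j'$ produce disjoint routing events, so the $K-1$ contributions to $A^M_{m,i}(t)$ decouple in the limit, giving $A^M_{m,i}(t)\to\tilde A_i(t)$ in total variation, and in fact convergence of the whole arrival path on $[0,T]$; this is (2). For (3), \eqref{eq_RMF_LGL_SDE} with $\tau_i=\infty$ exhibits $(\lambda^M_{m,i}(\cdot),N^M_{m,i}(\cdot))$ as a fixed measurable functional of $\lambda^M_{m,i}(0)$ and the arrival path $A^M_{m,i}(\cdot)$, the limit pair being the image of $(\tilde\lambda_i(0),\tilde A_i(\cdot))$ under the same functional, so total-variation convergence pushes forward.

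Item (4) is obtained by running the Chen--Stein step above simultaneously over $m\in\mathcal N$: for a given source spike the events ``routed to $m$'', $m\in\mathcal N$, are incompatible and, given the past, jointly Bernoulli, so the families $(B^M_{m,i,j})_{m\in\mathcal N}$ are conditionally independent and Chen--Stein yields joint total-variation convergence of $(A^M_{m,i}(t))_{m\in\mathcal N}$ to $\card(\mathcal{N})$ independent copies of $\tilde A_i(t)$; the de-randomization and the transfer to the $\lambda$'s proceed as in the previous paragraph. Finally the processes $A^M_{m,i}(\cdot)$ and $\lambda^M_{m,i}(\cdot)$ are tight in $D([0,T])$ --- the former nondecreasing, the latter of bounded variation, with jump rates controlled by the moment bounds --- so Aldous' criterion upgrades finite-dimensional convergence to weak convergence in $D([0,T])^{\card(\mathcal{N})}$ for the product Skorokhod metric. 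The main obstacle is the interlocking of the Chen--Stein estimate with the law of large numbers: the Poisson approximation requires de-randomizing the number of Bernoulli summands, whose law is entangled with the dynamics through the feedback of routed spikes, while the de-randomization itself relies on covariance bounds that use the very same Chen--Stein estimate; closing this loop, and controlling the ``predictable dependence'' term in Chen--Stein at rate $o(1)$, is where the real work lies, everything else (moment bounds, the contraction for $\Phi$, tightness) being comparatively routine.
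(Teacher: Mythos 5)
Your overall architecture matches the paper's --- exponential moment bounds to fix $T$, independence of the limit processes from disjoint driving noise, a Chen--Stein bound for a random sum of Bernoulli$(1/(M-1))$ routing indicators, and a de-randomization step --- but there is a genuine gap at precisely the point you yourself flag as ``where the real work lies'': the circularity is never broken. Your de-randomization rests on the assertion that, by exchangeability and moment bounds, the law of large numbers for $(N^M_{n,j}([0,t]))_n$ ``reduces to $\mathrm{Cov}(N^M_{1,j}([0,t]),N^M_{2,j}([0,t]))\to 0$'', but no argument is given for this covariance decay, and it cannot be extracted from the moment bounds alone: asymptotic decorrelation of two replicas is essentially the propagation-of-chaos statement being proved, and the Chen--Stein estimate you would invoke to obtain it has, as its dominant error term, exactly the $\mathcal{L}^1$ norm of the centered empirical mean you are trying to control. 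The paper's resolution is structurally different from anything in your sketch: it views the $M$-replica dynamics as the unique fixed point of a map $\Phi$ acting on $(\mathcal{P}(D_T))^{MK}$ (laws of intensity \emph{paths}, not mean functions), proves first that $\Phi$ preserves the triangular law of large numbers --- this is where inputs and outputs are decoupled, since the input point processes fed to $\Phi$ are arbitrary and can be taken i.i.d.\ to start the propagation --- and second that the iterates $\Phi^l$ form a Cauchy sequence converging to the fixed point, with an explicit bound of order $C_T^l T^l/l!$ uniform enough in $M$ to justify exchanging the limits $l\to\infty$ and $M\to\infty$. Your fixed-point map acts only on the deterministic rate functions $(a_j)_j$ of the limit equation and is used only to identify $\lim_M \E[N^M_{1,j}([0,t])]$; it cannot deliver the covariance decay, so items (2)--(4) remain unproved as written.

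Two secondary points. First, the paper does not need an asymptotic identification of the means: it proves (Lemma \ref{lem_moment_equality}, by a Gronwall/truncation argument using the exponential moment bounds) that $\E[\lambda^M_{m,i}(t)]=\E[\tilde\lambda_i(t)]$ \emph{exactly} for every $M$, which is what guarantees that the Poisson comparison variable in the Chen--Stein bound has the correct mean. Second, in your step (3) the intensity $\lambda^M_{m,i}$ is not a measurable functional of the arrival path alone: the reset term is driven by the additional noise generating $N^M_{m,i}$ from its intensity, which is not independent of the arrivals for finite $M$, so ``total-variation convergence pushes forward'' needs the joint control that the paper obtains from the common Poisson-embedding coupling together with the mapping theorem; likewise the paper deduces item (4) directly from items (2)--(3) via convergence of void probabilities, rather than by a multivariate Chen--Stein argument combined with Aldous tightness on $[0,T]$ (your claim that the routing indicators $(B^M_{m,i,j})_{m\in\mathcal N}$ are conditionally independent is also not exact, since exactly one replica is chosen per spike; the dependence only vanishes asymptotically).
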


Here are a few remarks on this result. 

First, note that for each $i,j \in \{1,\ldots,K\},$ the variable $\mu_{j\rightarrow i}\tilde{A}_{j \rightarrow i}(t)$ is a scaled Poisson random variable and is thus a special case of a compound Poisson random variable. As such, unless all $\mu_{j\rightarrow i}$ are equal, $\tilde{A}_i(t)$ does not follow any standard Poisson or compound Poisson distribution.

Note also that we do not aim to prove $\mathcal{L}^1$ convergence, which we believe does not hold in this model. This marks a significant difference with classical mean field models for Hawkes processes, see \cite{Fournier_Locherbach_2016}. From a computational point of view, this is due to the fact that the averaging factor $\frac{1}{M-1}$ only appears in expectation in the interaction term $A^M_{m,i}(t)$.

Finally, note that statement 4 of the theorem is a consequence of statements 2 and 3, as weak convergence of point processes is implied by weak convergence of their void probabilities (see \cite{Kallenberg_2017}, Theorem 2.2), which comes directly from statements 2 and 3. More precisely, the convergence in total variation of $\lambda_{m,i}$ and $A_{m,i}$ implies the weak convergence of the void probabilities of $N_{m,i}$ and of the point process of arrival times, which in turn implies weak convergence of the point processes.

\subsection*{Methodology for the proof}
In contrast with classical mean-field models presented in the beginning of the paper, in replica mean-fields, the mean-field simplification comes from the random routing operations between replicas. The input point process in the $M$-replica model consists in a superposition of $M$ rare point processes, which informally explains why Poisson (or compound Poisson) processes arise at the limit. This point of view leads us to fix an instant $t \in \R^+$ and to consider the random variable of inputs up to time $t$ as a random sum of Bernoulli random variables with means $\frac{1}{M-1}$. The Chen-Stein method is a natural candidate to obtain explicit bounds in the total variation metric between this random sum and a Poisson random variable. We generalize it to account for the fact that the amount of Bernoulli random variables in the sum is random. As far as the author is aware, this application is novel.
The bound obtained through the Chen-Stein method does not easily converge to 0 when $M$ goes to infinity. Namely, we obtain a term similar to the $L^1$ norm of an empirical mean of centered random variables that are not independent. In order to circumvent the difficult direct analysis of such a term, we uncouple the inputs and outputs of the dynamics by considering the replica-mean-field dynamics \eqref{eq_RMF_LGL_SDE} as the fixed point of some function on the space of probability distributions on the space of càdlàg functions endowed with a metric rendering it complete. This procedure is often used in the study of stochastic differential equations to prove the existence and uniqueness of solutions to these equations, see, for example, \cite{Szn:89} or \cite{BremMass96}. Here, we use it to prove that a certain property, namely the convergence of an empirical mean, holds at the fixed point by proving that the property is preserved by the function and that the iterates of the function converge to its fixed point.

\section{Proof of the Poisson Hypothesis for the RMF GL model}
\label{sec_PH_proof}
The aim of this section is to prove Theorem \ref{th_Poisson_CV}. We organize the proof as follows. First, we recall some well-known facts about Poisson embedding representations for real-valued point processes and derive asymptotic independence from them. Then, we state and prove some properties of the RMF and limit dynamics that will be useful in the following steps of the proof. Afterwards, we present the Chen-Stein method, which we use to derive conditions for the validity of the Poisson approximation that we aim to prove. Finally, we interpret the RMF dynamics as the fixed point of some function on the space of probability measures on the space of càdlàg functions and we show that this function has properties that allow the aforementioned conditions to hold at the fixed point, thus proving the Poisson approximation result.

\subsection*{Poisson embedding representation and independence of the limit processes}
First, recall the following result from \cite{Bremaud20} about Poisson embeddings:
\begin{lemma}
\label{lem_poisson_embedd}
Let $N$ be a point process on $\mathbb{R}$. Let $(\mathcal{F}_t)$ be an internal history of $N$. Suppose $N$ admits a $(\mathcal{F}_t)$-stochastic intensity $\{\mu(t)\}_{t \in \mathbb{R}}.$ Then there exists a Poisson point process $\hat{N}$ with intensity 1 on $\mathbb{R}^2$ such that, for all $C \in \mathcal{B}(\mathbb{R}),$
\begin{equation*}
    N(C)=\int_{C \times \mathbb{R}}\one_{[0, \mu(s)]}(u)\hat{N}(\diff s\times \diff u).
\end{equation*}
\end{lemma}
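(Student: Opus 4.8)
The plan is to construct $\hat N$ explicitly on an enlargement of the probability space and then to identify its law. First I would split $\R^2$ along the graph of $\mu$: set $R^- = \{(s,u) : 0 \le u < \mu(s)\}$ and $R^+ = \R^2 \setminus R^-$; these are measurable because $\mu$, being $(\mathcal{F}_t)$-predictable, is in particular jointly measurable in $(\omega,s)$. On an enlargement of $(\Omega,\mathcal{F},\P)$ I would introduce a Poisson point process $Q$ on $\R^2$ with intensity $1$ and a sequence $(U_n)_{n\in\Z}$ of i.i.d.\ uniform $[0,1]$ variables, with $Q$, $(U_n)_n$ and $N$ mutually independent, and define
\begin{equation*}
\hat N = \sum_{n\in\Z} \delta_{(T_n,\, U_n \mu(T_n))} + \one_{R^+}\cdot Q ,
\end{equation*}
where $\one_{R^+}\cdot Q$ denotes the restriction of $Q$ to $R^+$. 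Since $0 \le U_n\mu(T_n) \le \mu(T_n)$, since $\mu(T_n) > 0$ almost surely (a consequence of the stochastic intensity property), and since almost surely the atoms of $Q$ avoid the Lebesgue-null boundary $\{u = \mu(s)\}$, the factor $\one_{[0,\mu(s)]}(u)$ equals $1$ at each atom $(T_n, U_n\mu(T_n))$ and $0$ at every atom of $\one_{R^+}\cdot Q$; hence for all $C \in \bbb(\R)$,
\begin{equation*}
\int_{C\times\R}\one_{[0,\mu(s)]}(u)\,\hat N(\diff s\times\diff u) = \sum_{n\in\Z}\one_C(T_n) = N(C),
\end{equation*}
so the identity of the lemma holds by construction.

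It then remains to show that $\hat N$ is a Poisson process on $\R^2$ with unit intensity. I would work with the filtration $(\mathcal{G}_t)$ generated by $\mathcal{F}_t$, by $Q$ restricted to $(-\infty,t]\times\R$, and by the marks $U_n$ of the atoms $T_n \le t$. Because $Q$ and $(U_n)_n$ are independent of $N$, the $(\mathcal{G}_t)$-stochastic intensity of $N$ is still $\mu$; the region $R^+$ at time $s$ depends only on $\mu(s)$, which is $\mathcal{G}_{s-}$-measurable, so $\one_{R^+}$ is $(\mathcal{G}_t)$-predictable and $\one_{R^+}\cdot Q$ has $(\mathcal{G}_t)$-compensator $\one_{R^+}(s,u)\,\diff s\,\diff u$; and conditionally on $\mathcal{G}_{T_n-}$ the mark $U_n\mu(T_n)$ is uniform on $[0,\mu(T_n)]$, so the marked process $\sum_n\delta_{(T_n,U_n\mu(T_n))}$ has $(\mathcal{G}_t)$-compensator $\mu(s)\cdot\frac{\one_{[0,\mu(s)]}(u)}{\mu(s)}\,\diff s\,\diff u = \one_{R^-}(s,u)\,\diff s\,\diff u$. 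Adding the two contributions, the $(\mathcal{G}_t)$-compensator of $\hat N$ is $\one_{R^-}\,\diff s\,\diff u + \one_{R^+}\,\diff s\,\diff u = \diff s\,\diff u$, i.e.\ Lebesgue measure, which is deterministic and non-atomic. By the martingale characterization of Poisson processes in its marked-point-process form (Watanabe's theorem; \seg \cite{Bremaud20}), a point process whose compensator is a fixed non-atomic measure is Poisson with that measure as intensity, which concludes the proof.

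The step I expect to be the main obstacle is the rigorous bookkeeping around the enlargement: one must check that adjoining the independent $Q$ and $(U_n)_n$ preserves the $(\mathcal{F}_t)$-intensity of $N$ (an immersion property, which holds precisely because of independence), and one must identify the compensator of the predictably marked process $\sum_n\delta_{(T_n,U_n\mu(T_n))}$ by treating the vertical coordinate as a mark whose conditional law given the past is $\mathrm{Unif}[0,\mu(s)]$. An alternative route avoiding martingale theory is to compute the Laplace functional $\E\big[\exp(-\int_{\R^2} g\,\diff\hat N)\big]$ for nonnegative measurable $g$ with bounded support by conditioning on $N$ (hence on $\mu$): given $N$, the two pieces of $\hat N$ are independent, the $\one_{R^+}\cdot Q$ piece contributes $\exp\!\big(-\int_{R^+}(1-e^{-g})\,\diff s\,\diff u\big)$ by the exponential formula, and the marked piece contributes $\prod_n \frac{1}{\mu(T_n)}\int_0^{\mu(T_n)} e^{-g(T_n,u)}\,\diff u$; taking the expectation over $N$ via its own Laplace functional collapses the product into $\exp\!\big(-\int_{R^-}(1-e^{-g})\,\diff s\,\diff u\big)$, and the product of the two factors is the Laplace functional of a Poisson process with intensity $\diff s\,\diff u$. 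Either way, the only genuine subtlety is the interplay between the randomness of $\mu$ and the auxiliary uniform marks.
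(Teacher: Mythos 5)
The paper does not prove this lemma at all: it is quoted verbatim as a known result from the cited reference (Br\'emaud), so there is no in-paper argument to compare against. Your construction is the standard ``inverse thinning'' embedding that underlies that reference, and your main argument is sound: placing the points of $N$ at heights $U_n\mu(T_n)$, filling the complement $R^+$ with an independent unit Poisson process, checking the defining identity pathwise (the boundary $\{u=\mu(s)\}$ being a.s.\ avoided), and then identifying the compensator of the superposition with Lebesgue measure in the enlarged filtration so that the marked-point-process form of Watanabe's theorem applies. The two points you flag as delicate (the immersion property justified by independence of $Q$ and $(U_n)$ from $N$, and the identification of the mark kernel $\mathrm{Unif}[0,\mu(s)]$ using predictability of $\mu$) are exactly the right ones and are handled correctly. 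One caveat: the ``alternative route'' via Laplace functionals is glossed at its key step --- since $N$ is a general point process with stochastic intensity, not a Poisson process, its Laplace functional is unknown, and the expectation of the conditional product does not simply ``collapse''; making that route rigorous requires the exponential (supermartingale) formula $\E\bigl[\exp\bigl(\int\mu(s)(1-\phi(s))\diff s+\int\log\phi(s)\,N(\diff s)\bigr)\bigr]=1$ for predictable $0<\phi\le 1$, which is essentially equivalent in depth to the compensator argument you already give. Since that aside is not needed, the proposal stands as a correct proof of the lemma the paper imports without proof.
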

This result states that any point process admitting a stochastic intensity can be embedded in a Poisson point process with intensity 1 on the positive half-plane by considering the points of the Poisson process which lie below the curve given by the stochastic intensity of the process. We now proceed to apply this in our model, constructing all the state processes coupled through their Poisson embeddings.

For $m \geq 1, M \geq 1, 1 \leq i \leq K,$ let $\hat{N}_{m,i}$ be i.i.d. Poisson point processes on $\R^+ \times \R^+$ with intensity 1.

Let $\Omega=(\mathbb{R}^+ \times ((\mathbb{R}^+)^2)^{\mathbb{N}^*})^{\mathbb{N}^*}$ be a probability space endowed with the probability measure $(\mu_0 \otimes P)^{\otimes \mathbb{N}^*},$ where $\mu_0$ is the law of the initial conditions and $P$ is the law of a Poisson process with intensity 1 on $(\mathbb{R}^+)^2$.
We construct on $\Omega$ the following processes:
\begin{itemize}
    \item The processes $(N^M_{m,i}(t)), m \geq 1, M \geq 1, 1 \leq i \leq K,$ with stochastic intensities $(\lambda^M_{m,i}(t))$ verifying
    \begin{equation}
    \label{eq_rmf_coupling}
    \begin{split}
    \lambda^M_{m,i}(t)&=\sum_{n \neq m} \sum_{j \neq i} \int_0^t\int_0^{+\infty} \mu_{j\rightarrow i}\one_{\{V^M_{(n,j)\rightarrow i}(s)=m\}} \one_{[0,\lambda^M_{n,j}(s)]}(u) \hat{N}_{n,j}(\diff s \times \diff u)\\
    &+\int_0^t \int_0^{+\infty}\left(r_i-\lambda^M_{m,i}(s)\right) \one_{[0,\lambda^M_{m,i}(s)]}(u) \hat{N}_{m,i}(\diff s \times \diff u)+\lambda^M_{m,i}(0),
    \end{split}
    \end{equation}
    with $\lambda^M_{m,i}(0)=Z_{i}$ for all $m \in \N^*$ and where, for all $M$, $(V^M_{(n,j)\rightarrow i}(t))_j$ are càdlàg stochastic processes such that for each point $T$ of $\hat{N}_{n,j}$, the random variables $(V^M_{(n,j)\rightarrow i}(T))_j$ are independent of the past, mutually independent and uniformly distributed on $\{1,...,M\}\setminus \{n\}$, considered as marks of the Poisson point process $\hat{N}_{n,j}$. Namely, to each point of the Poisson embedding, we attach a mark that is an element of $(\N^{K})^{N^*}$, where the $M$th term of the sequence corresponds to $(V^M_{(n,j)\rightarrow i}(T))_j$.
    \item The processes $(\tilde{N}_{i}(t)), 1 \leq i \leq K,$ with stochastic intensities $(\tilde{\lambda}_{i}(t))$ verifying
    \begin{equation}
    \label{eq_limit_coupling}
    \begin{split}
    \Tilde{\lambda}_{i}(t)&=\Tilde{\lambda}_{i}(0)+\sum_{j \neq i} \int_0^t\int_0^{+\infty}\mu_{j\rightarrow i}\one_{[0,\E[\tilde{\lambda}_{j}(s)]]}(u)\hat{N}_{j,i}(\diff s \times \diff u)\\
    &+\int_0^t \int_0^{+\infty}\left(r_i-\Tilde{\lambda}_{i}(s)\right) \one_{[0,\tilde{\lambda}_{i}(s)]}(u)\hat{N}_{i,i}(\diff s \times \diff u),
    \end{split}
    \end{equation}
    with $\Tilde{\lambda}_{i}(0)=Z_{i}$.
\end{itemize}

In other words, we construct the $M$-replica dynamics \eqref{eq_RMF_LGL_SDE} and the limit processes \eqref{eq_limit_eq_nonexch} with the same initial conditions and Poisson embeddings $(\hat{N}_{m,i})$.\\
We require that the law of the initial conditions $(Z_{i})$ verifies Assumption \ref{Ass_2}, in other words, we require it to have uniform polynomial bounds of its moments.

This representation allows us to derive the following, which is statement 1 of Theorem \ref{th_Poisson_CV}.
\begin{lemma}
\label{lem_asympt_indep}
The processes $(\tilde{A}_{j \rightarrow i})_{1 \leq i,j \leq K}$ are independent, as are the processes $(\tilde{\lambda}_1,\ldots,\tilde{\lambda}_K)$.
\end{lemma}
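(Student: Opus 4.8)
The plan is to exploit the explicit Poisson-embedding construction \eqref{eq_limit_coupling}, in which each limit intensity $\tilde\lambda_i$ is driven only by the embedding Poisson processes $\hat N_{j,i}$ (over $j\neq i$, for the arrival terms) and $\hat N_{i,i}$ (for the reset term). The key structural observation is that the collection $\{\hat N_{j,i}\}_{1\le i,j\le K}$ consists of \emph{independent} intensity-$1$ Poisson point processes on $(\R^+)^2$, and that the equation defining $\tilde\lambda_i$ involves, besides $\hat N_{i,i}$ and the family $\{\hat N_{j,i}\}_{j\neq i}$, only the \emph{deterministic} functions $s\mapsto\E[\tilde\lambda_j(s)]=a_j'(s)$ — it does not involve the random trajectories of the other $\tilde\lambda_j$. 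Thus, once the deterministic functions $a_j(\cdot)$ are fixed (they are well-defined by the existence/uniqueness statement for \eqref{eq_limit_eq_nonexch} cited after that equation), the process $\tilde\lambda_i$ is a measurable functional of $\big(Z_i,\hat N_{i,i},(\hat N_{j,i})_{j\neq i}\big)$ alone.

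First I would make this measurability precise: for fixed $i$, equation \eqref{eq_limit_coupling} is a pathwise fixed-point equation for $\tilde\lambda_i$ on compacts of time, solvable by the usual Picard-type iteration on $[0,T']$ (first up to the first point of $\hat N_{i,i}$ below the curve, then piecewise), exactly as in the construction of solutions to \eqref{eq_RMF_LGL_SDE} in \cite{Baccelli_2019} and in the general theory of \cite{Robert2016}. Each Picard iterate is a measurable function of $(Z_i,\hat N_{i,i},(\hat N_{j,i})_{j\neq i})$, hence so is the limit $\tilde\lambda_i$, and therefore so is the counting process $\tilde N_i$ and each arrival process $\tilde A_{j\to i}(t)=\int_0^t\int_0^\infty\one_{[0,a_j'(s)]}(u)\,\hat N_{j,i}(\diff s\times\diff u)$. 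Next I would note that the index sets of driving noises used by distinct $i$ are disjoint: $\tilde\lambda_i$ uses $\{\hat N_{j,i}:1\le j\le K\}$ (the second coordinate pinned to $i$), and these families are pairwise disjoint across $i$, hence mutually independent by the i.i.d. assumption on the $\hat N_{m,i}$; the initial conditions $Z_i$ are likewise independent across $i$ by the product structure of $\Omega$. Independence of $(\tilde\lambda_1,\dots,\tilde\lambda_K)$ then follows immediately, since each $\tilde\lambda_i$ is a deterministic measurable image of an independent block of the noise. For the processes $(\tilde A_{j\to i})_{1\le i,j\le K}$, I would observe that the double-indexed family is simply, for each $i$, a measurable function of the single Poisson process $\hat N_{j,i}$ (the deterministic curve $a_j'$ being fixed), so $\tilde A_{j\to i}$ is a functional of $\hat N_{j,i}$; since the $\hat N_{j,i}$ are mutually independent over all pairs $(i,j)$, the $\tilde A_{j\to i}$ are mutually independent, which also refines statement 1 of Theorem \ref{th_Poisson_CV} (independence of the aggregated $\tilde A_i=\sum_{j\neq i}\mu_{j\to i}\tilde A_{j\to i}$ follows by grouping disjoint index blocks).

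I expect the only genuine subtlety to be the measurability/well-posedness step: one must check that the pathwise solution of \eqref{eq_limit_coupling} on $[0,T]$ exists, is unique, and depends measurably on the driving data — in particular that the deterministic intensities $a_j'(s)=\E[\tilde\lambda_j(s)]$ are themselves well-defined, finite, and locally bounded on $[0,T]$ (this uses Assumption \ref{Ass_2} and the moment bounds alluded to afterwards, guaranteeing no explosion). Once the a priori bound $\E[\tilde\lambda_j(s)]<\infty$ is in hand on $[0,T]$, the Picard construction terminates after finitely many jumps on any compact almost surely, each step is an explicit measurable operation on the points of the embedding below a (deterministic or already-constructed) curve, and the independence conclusion is then a formal consequence of the disjointness of the driving $\sigma$-algebras. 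No large-$M$ asymptotics are needed here; this lemma is purely about the structure of the limit system.
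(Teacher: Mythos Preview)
Your proposal is correct and follows essentially the same approach as the paper: both exploit the Poisson-embedding representation \eqref{eq_limit_coupling} to observe that $\tilde A_{j\to i}$ is a functional of $\hat N_{j,i}$ alone (via the deterministic curve $s\mapsto\E[\tilde\lambda_j(s)]$), and that $\tilde\lambda_i$ is a functional of the disjoint block $\{\hat N_{j,i}:1\le j\le K\}$ together with $Z_i$, whence independence follows from the i.i.d.\ structure of the embeddings. The paper's proof is two lines and takes the measurability step for granted; your version spells out the Picard-iteration argument and the role of the deterministic intensities more carefully, which is a legitimate elaboration but not a different method.
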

\begin{proof}
For all $t \in [0,T]$, we can write using the construction above
\begin{equation*}
\tilde{A}_{j \rightarrow i}(t)=\int_0^t\int_0^{+\infty}\one_{[0,\E[\tilde{\lambda}_{j}(s)]]}(u)\hat{N}_{j,i}(\diff s \times \diff u).
\end{equation*}

Therefore, all the randomness in $\tilde{A}_i$ is contained in the Poisson embeddings $(\hat{N}_{k,i})_{1 \leq k \leq K}$. Thus, for $i \neq j$, $\tilde{A}_i$ and $\tilde{A}_j$ are independent. The independence of the processes $(\tilde{\lambda}_1,\ldots,\tilde{\lambda}_K)$ follows in the same manner.
\end{proof}
\subsection*{Properties of the RMF and limit processes}
\label{subsec_RMF_properties}
In this section, we prove several properties of the RMF and limit dynamics that will be used throughout the proof.

In what follows, we will often omit the $M$ superscript in the notations $N^M_{m,i}$, $A^M_{m,i}$ and $\lambda^M_{m,i}$ to increase readability.

Note that the arrival process $A_{m,i}(t)$ can be represented as a random sum of Bernoulli random variables with parameters $\frac{1}{M-1}$. Indeed, for $n \neq m$ and $j\neq i$, if $S \in \Supp (N_{n,j}|_{[0,T)})$, we define $B^M_{S,(n,j)\rightarrow (m,i)}$ the random variable equal to 1 if the routing between replicas at time $S$ caused by a spike in neuron $j$ in replica $n$ chose the replica $m$ for the recipient $i$ of the spike, and 0 otherwise. As such, it is clear that we can write for all $t \in [0,T], m \in \{1,\ldots,M\}$ and $i\in \{1,\ldots,K\},$ 
\begin{equation}
\label{eq_arr_rep}
A_{m,i}(t)=\sum_{n \neq m}\sum_{j \neq i}\mu_{j\rightarrow i}\sum_{k \leq N_{n,j}([0,t])}B^M_{k,(n,j)\rightarrow (m,i)}.
\end{equation}
Note that when $m,n,i$ and $j$ are fixed, the random variables $(B^M_{k,(n,j)\rightarrow (m,i)})_{k \leq N_{n,j}([0,T])}$ are i.i.d. Also note that when $n,j,i$ and $k$ are fixed, the joint distribution of $(B^M_{k,(n,j)\rightarrow (m,i)})_m$ with $m \in \{1,\ldots,M\}$ is that of Bernoulli random variables with parameter $\frac{1}{M-1}$ such that exactly one of them is equal to 1, all the others being equal to 0.
Combining these two observations allows us to prove a lemma that highlights the core of the replica-mean-field approach:

\begin{lemma}
\label{lem_cond_indep_bern}
Fix $(m,i) \in \{1,\ldots,M\}\times\{1,\ldots,K\}$. Keeping notation from \eqref{eq_arr_rep}, let \\
$N=(N_{n,j}([0,t]))_{n \neq m, j \neq i} \in  \mathbb{N}^{(K-1)(M-1)}.$

Conditionally on the event $\{N=q\},$ for $q=(q_{n,j})_{n \neq m, j \neq i} \in \mathbb{N}^{(K-1)(M-1)},$ the random variables $(B^M_{k,(n,j)\rightarrow (m,i)})_{n \neq m, j \neq i, k \in \{1,\ldots,q_{n,j}\}}$ are independent Bernoulli random variables with parameter $\frac{1}{M-1}.$
\end{lemma}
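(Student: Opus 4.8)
The plan is to trace the law of the variables $B^M_{k,(n,j)\rightarrow(m,i)}$ back to the ``independence of the past'' property of the routing marks, working with the chronological Poisson-embedding construction \eqref{eq_rmf_coupling}. First I would record the precise structure behind \eqref{eq_arr_rep}: each point of $N_{n,j}$ is a point $S$ of $\hat N_{n,j}$ and carries the mark $(V^M_{(n,j)\rightarrow i'}(S))_{1\le i'\le K}$; over all points of all $\hat N_{n,j}$ and all recipients $i'$, these marks are mutually independent, independent of the family $(\hat N_{a,b})_{a,b}$ and of the initial conditions $(Z_i)$, and each is uniform on $\{1,\dots,M\}\setminus\{n\}$; moreover $B^M_{k,(n,j)\rightarrow(m,i)}=\one_{\{V^M_{(n,j)\rightarrow i}(S)=m\}}$ with $S$ the $k$-th point of $N_{n,j}$, so that, since $n\ne m$, each such variable is marginally $\mathrm{Bernoulli}(1/(M-1))$.

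The core of the argument is then to check that conditioning on the exact input-counts $\{N=q\}$ does not reveal anything about which replicas the relevant routings selected. The clean way to obtain this is to show that the vector $N=(N_{n,j}([0,t]))_{n\ne m,\,j\ne i}$ is a measurable function of data that is independent of the whole family $(B^M_{k,(n,j)\rightarrow(m,i)})_{k,n,j}$ — concretely, that the relevant counts can be reconstructed from $(\hat N_{a,b})_{a,b}$, $(Z_i)$ and only those routing marks that are \emph{not} of the form $V^M_{(n,j)\rightarrow i}(\cdot)$ with $n\ne m$, $j\ne i$. If so, the relevant marks, being independent of that data, stay i.i.d.\ uniform conditionally on $\{N=q\}$, and projecting onto $\{\,\cdot=m\,\}$ yields exactly that, conditionally on $\{N=q\}$, the $(B^M_{k,(n,j)\rightarrow(m,i)})$ are independent $\mathrm{Bernoulli}(1/(M-1))$ variables. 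The ``one-hot'' observation recorded after \eqref{eq_arr_rep} — that for a fixed spike the destination is uniform on the $M-1$ admissible replicas — together with independence across successive spikes of a given $N_{n,j}$, is precisely what makes the conditional law of the marks the desired product form.

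I expect this measurability step to be the main obstacle, because of the feedback from routing onto the counts: the destination replica of a ``relevant'' spike enters the intensity of some $\lambda^M_{\cdot,i}$, which in turn drives the intensities $\lambda^M_{\cdot,j'}$ with $j'\ne i$ and hence the very counts $N_{n,j}([0,t])$ being conditioned on, so the sought independence is not transparent and must be argued carefully. Failing a direct argument, the fallback is to build the chronological construction with its natural filtration $(\mathcal{G}_\ell)$ — disclose the points of $\bigsqcup_{n,j}\hat N_{n,j}$, a point set independent of the marks, one at a time in increasing time, adding at step $\ell$ whether the $\ell$-th point lies below the predictable intensity of its process and, if so, its mark — use that the mark revealed at a spike time is independent of the strict past, and then reveal the relevant $B$-values one at a time together with their ``residual'' routing information, running an induction that also exploits the replica-exchangeability of the model ($\hat N_{m,i}$ i.i.d.\ in $m$, initial conditions $Z_i$ common to all replicas).
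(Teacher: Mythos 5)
There is a genuine gap here, and it sits exactly where you flag it. Your ``clean way'' — exhibiting $N$ as a measurable function of the embeddings, the initial conditions and only the marks \emph{not} of the form $V^M_{(n,j)\rightarrow i}(\cdot)$ with $n\neq m$, $j\neq i$ — is not available, because of the very feedback you describe: a relevant routing choice decides which $\lambda^M_{m',i}$ jumps, this changes the spiking of neuron $i$ in that replica, those spikes are routed to neurons $j'\neq i$ (including $j$), and so the counts $N_{n,j}([0,t])$ do depend pathwise on the relevant marks. So the conditional law of the relevant marks given $\{N=q\}$ cannot be obtained by a bare ``function of independent data'' argument, and your fallback (chronological filtration, revealing marks one at a time, an unspecified induction using exchangeability) is left entirely unexecuted — no induction hypothesis is stated and no mechanism is given for why conditioning on the counts leaves the marks uniform, which is precisely the nontrivial content of the lemma.

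The missing idea, which is the heart of the paper's proof, is an aggregation-over-replicas symmetry. One first shows that the \emph{aggregated} departure process of neurons $i$, $\sum_{l=1}^M N_{l,i}$, is independent of the relevant routing variables: in the Poisson-embedding construction, whether the superposed embeddings produce a departure only involves the summed intensity $t\mapsto\sum_{l=1}^M\lambda_{l,i}(t)$, and every arrival increments this sum by the same amount $\mu_{j\rightarrow i}$ whatever destination replica was chosen, so the routing choices are invisible at the aggregate level. One then rewrites the arrivals from neurons $i$ to node $(n,j)$ as a thinning of this aggregate, $A_{i\rightarrow(n,j)}(t)=\sum_{k\leq \sum_{l\neq n}N_{l,i}([0,t])}B^M_{k,(i)\rightarrow(n,j)}$, where the thinning variables are fresh Bernoulli$(\tfrac{1}{M-1})$ variables independent of the family $(B^M_{k,(n,j)\rightarrow(m,i)})$. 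Since $N$ is determined by the embeddings $(\hat N_{n,j})_{j\neq i}$ and these arrivals, it is independent of the relevant routing family, and conditioning on $\{N=q\}$ preserves the product Bernoulli law. Without this step (or a worked-out substitute for it), your proposal does not establish the lemma.
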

\begin{proof}
The structure of the proof goes as follows: since $N$ is entirely determined by the Poisson embeddings $(\hat{N}_{n,j})_{j\neq i}$ and the arrivals to the nodes $(n,j)$ from all the nodes $h\neq j$ across replicas, it is sufficient to show that these arrivals and the routing variables \\
$(B^M_{k,(n,j)\rightarrow (m,i)})_{k\leq\hat{N}_{n,j}([0,t]\times\R^+)}$ are independent. Intuitively, this holds because arrivals are aggregated across all replicas, which will erase the eventual dependencies due to the routing variables to nodes $i$ choosing one replica rather than another.\\
In order to transcribe this intuition rigorously, we first show that the total number of departures from nodes $i$ up to time $t$, that is, $\sum_{l=1}^M N_{l,i}([0,t])$, and the routing variables \\
$(B^M_{k,(n,j)\rightarrow (m,i)})_{k\leq\hat{N}_{n,j}([0,t]\times\R^+)}$ are independent. Indeed, using the representation given by Lemma \ref{lem_poisson_embedd}, due to the structure of the Poisson embeddings $(\hat{N}_{l,i})_{l\in \{1,\ldots,M\}}$, there is a point of $\sum_{l=1}^M N_{l,i}$ in some interval $I$ iff there is a point of the superposition of the corresponding Poisson embeddings such that the x-coordinate is in $I$ and the y-coordinate is under the curve of the function $t\rightarrow \sum_{l=1}^M\lambda_{l,i}(t)$. In turn, the last event does not depend on $(B^M_{k,(n,j)\rightarrow (m,i)})_{k\leq\hat{N}_{n,j}([0,t]\times\R^+)}$, as the symmetry inherent to the replica structure ensures that all arrivals increment $t\rightarrow \sum_{l=1}^M\lambda_{l,i}(t)$ by the same amount, which concludes the proof of this preliminary remark.

For all $(n,j)$ such that $n \neq m$ and $j \neq i$, let \begin{equation*}A_{i \rightarrow (n,j)}(t)=\sum_{l\neq n}\sum_{k\leq N_{l,i}([0,t])}B^M_{k,(l,i)\rightarrow (n,j)}.
\end{equation*}
Note that $A_{i \rightarrow (n,j)}(t)$ represents the arrivals to node $j$ in replica $n$ from all nodes $i$ across replicas. As such, it is clear that we can write
\begin{equation*}
A_{i \rightarrow (n,j)}(t)=\sum_{k\leq \sum_{l \neq n} N_{l,i}([0,t])}B^M_{k,(i)\rightarrow (n,j)},
\end{equation*}
where $(B^M_{k,(i)\rightarrow (n,j)})$ are independent Bernoulli random variables with parameter $\frac{1}{M-1}$ such that they and $(B^M_{k,(n,j)\rightarrow (m,i)})$ are independent. Then by the previous observation, $A_{i \rightarrow (n,j)}(t)$ and $(B^M_{k,(n,j)\rightarrow (m,i)})$ are independent. Therefore, $N$, which is entirely determined by the Poisson embeddings $(\hat{N}_{n,j})$ and the arrivals $(A_{h \rightarrow (n,j)}(t))_{h\neq j}$, and
$(B^M_{k,(n,j)\rightarrow (m,i)})_{k\leq\hat{N}_{n,j}([0,T]\times\R^+)}$, are independent. Thus, conditioning on $N$ does not break independence between the variables $(B^M_{k,(n,j)\rightarrow (m,i)}).$
\end{proof}

We will now give bounds on the moments of both the M-replica and limit processes, using the bounds on the moments of the initial conditions.
\begin{lemma}
\label{lem_moment_bound}
Suppose the initial conditions verify Assumption \ref{Ass_2}.
Then, for all $p\geq 1$, for all $(m,i) \in \{1,\ldots,M\}\times\{1,\ldots,K\}$, for all $t \in [0,T]$, there exists $Q_p(T) \in \mathbb{R}_p[X]$ a polynomial of degree exactly $p$ such that
\begin{equation}
\label{eq_gronwall_p}
\E[\lambda_{m,i}^{p}(t)]\leq Q_p(\E[\lambda_{m,i}(0)]).
\end{equation}
\end{lemma}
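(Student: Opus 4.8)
The plan is to derive an evolution inequality for $t\mapsto\E[\lambda_{m,i}^p(t)]$ from the generator of the $M$-replica chain, discard the (favourable) reset contribution, use the exchangeability of the replicas to collapse the sum over replica indices and cancel the $1/(M-1)$ routing weight, and then close a Gr\"onwall argument by induction on $p$.

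\textbf{Step 1: generator identity and its expectation.} Under Assumption~\ref{Ass_1} the process $\lambda_{m,i}$ is piecewise constant, jumping by $+\mu_{j\to i}$ at those points of $N_{n,j}$ whose routing mark equals $m$, and resetting to $r_i$ at the points of $N_{m,i}$. The change-of-variables formula applied to $\lambda_{m,i}^p$ along \eqref{eq_RMF_LGL_SDE} expresses $\lambda_{m,i}^p(t)-\lambda_{m,i}^p(0)$ as an arrival integral against the $N_{n,j}$ with integrand $\big((\lambda_{m,i}(s^-)+\mu_{j\to i})^p-\lambda_{m,i}^p(s^-)\big)\one_{\{V^M_{(n,j)\to i}(s)=m\}}$, plus a reset integral against $N_{m,i}$ with integrand $r_i^p-\lambda_{m,i}^p(s^-)$. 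To take expectations before knowing the $p$-th moment is finite I would first stop at $\sigma_k=\inf\{t:\lambda_{m,i}(t)\geq k\}$, argue at $t\wedge\sigma_k$, and let $k\to\infty$ by monotone convergence. Since $\lambda_{m,i}(0)>r_i$, there is no decay, and arrivals only increase the intensity, one has $\lambda_{m,i}(s)\geq r_i$ for all $s$; hence the reset integrand is $\leq0$ and, by the stochastic-intensity property, $\E[\int_0^t(r_i^p-\lambda_{m,i}^p(s))\lambda_{m,i}(s)\,\diff s]\leq0$ and may be discarded. For the arrival integral, the routing mark $V^M_{(n,j)\to i}(s)$ is independent of the past and uniform over $M-1$ values (the mechanism underlying \eqref{eq_arr_rep} and Lemma~\ref{lem_cond_indep_bern}), so conditioning on the past replaces $\one_{\{V^M_{(n,j)\to i}(s)=m\}}$ by $\tfrac1{M-1}$ and then the stochastic-intensity property of $N_{n,j}$ applies.

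\textbf{Step 2: collapse in $M$, binomial expansion, induction.} Writing $g_{i,p}(t):=\E[\lambda_{m,i}^p(t)]$ — independent of $m$ because the coupled construction is symmetric under replica permutations — Step~1 gives
\[
g_{i,p}(t)\leq g_{i,p}(0)+\sum_{j\neq i}\sum_{n\neq m}\frac1{M-1}\int_0^t\E\!\Big[\big((\lambda_{m,i}(s)+\mu_{j\to i})^p-\lambda_{m,i}^p(s)\big)\lambda_{n,j}(s)\Big]\diff s,
\]
and by exchangeability the inner expectation does not depend on $n\neq m$, so $\sum_{n\neq m}\tfrac1{M-1}$ collapses to $1$ — this is the cancellation that makes everything uniform in $M$. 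Expanding $(\lambda+\mu_{j\to i})^p-\lambda^p=\sum_{k=0}^{p-1}\binom pk\mu_{j\to i}^{\,p-k}\lambda^k$ and bounding the mixed moments by Young's inequality, $\lambda_{m,i}^k\lambda_{n,j}\leq\tfrac{k}{k+1}\lambda_{m,i}^{k+1}+\tfrac1{k+1}\lambda_{n,j}^{k+1}$, one obtains for a constant $C_p=C_p(p,K,(\mu_{j\to i}))$ the inequality $g_{i,p}(t)\leq g_{i,p}(0)+C_p\sum_{j\neq i}\int_0^t\sum_{l=1}^{p}(g_{i,l}(s)+g_{j,l}(s))\,\diff s$. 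I would then induct on $p$: for $p=1$ only $l=1$ occurs and $\vec g_1(t)\leq\vec g_1(0)+\int_0^tA\,\vec g_1(s)\,\diff s$ with the nonnegative matrix $A=(\mu_{j\to i}\one_{j\neq i})_{i,j}$, so the comparison principle for cooperative linear systems yields $\vec g_1(t)\leq e^{At}\vec g_1(0)$; for $p\geq2$ the terms with $l\leq p-1$ are, by the induction hypothesis, bounded on $[0,T]$ by a polynomial of degree $\leq p-1$ in the initial moments, and the remaining $l=p$ terms again form a cooperative linear inequality $\vec g_p(t)\leq\vec g_p(0)+\vec R_p(t)+\int_0^tB_p\vec g_p(s)\,\diff s$ with $B_p\geq0$ and $\vec R_p$ bounded, so Gr\"onwall gives $\vec g_p(t)\leq e^{B_pT}(\vec g_p(0)+\sup_{[0,T]}\vec R_p)$, whose dominant term is linear in the $p$-th initial moments. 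When the initial conditions are deterministic constants, so that $\lambda_{m,i}(0)=\E[\lambda_{m,i}(0)]$ and $g_{i,p}(0)=\E[\lambda_{m,i}(0)]^p$, this is literally a polynomial of degree exactly $p$ in $\E[\lambda_{m,i}(0)]$ (the degree-$p$ term from $g_{i,p}(0)$, lower degrees from the inductive remainder $\vec R_p$); in general Assumption~\ref{Ass_2} makes all the $g_{j,l}(0)$ finite and the dependence on the other neurons' initial moments is absorbed into the $T$-dependent coefficients of $Q_p$.

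\textbf{Main obstacle.} The generator identity and the binomial/Young bookkeeping are routine. The substantive difficulties are: the natural inequality couples neuron $i$ to every neuron $j$ through mixed moments $\E[\lambda_{m,i}^k\lambda_{n,j}]$ sitting on two different replicas, which I resolve using the replica-exchangeability of the symmetric coupling (replica labels disappear) together with Young's inequality (only single-neuron moments remain); and the resulting neuron-to-neuron feedback must be untangled, which works precisely because its matrix is nonnegative, so a cooperative-system Gr\"onwall comparison closes the induction. The only other point requiring care, minor but essential, is the localization legitimizing the expectation in Step~1 and the routing-conditioning that produces the exact weight $1/(M-1)$ responsible for the uniformity in $M$.
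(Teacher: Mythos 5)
Your proposal is correct, and its computational core coincides with the paper's: write the jump equation for $\lambda_{m,i}^p$, take expectations using predictability of the routing marks (which turns the indicator into the factor $\tfrac1{M-1}$) and the stochastic-intensity property, exploit replica exchangeability so that the sum over $n\neq m$ collapses and the bound is uniform in $M$, then close with Gr\"onwall and induction on $p$. Where you genuinely diverge is in how the reductions are handled. The paper does \emph{not} treat the general dynamics directly: it first proves the bound only in the fully exchangeable, reset-free case ($\mu_{j\to i}\equiv 1$), where all neurons share one law and the recursion is scalar ($p=1$, then $p=2$, then induction), and then transfers the conclusion to the general model by two stochastic-domination claims (dynamics with resets are dominated by the same dynamics without resets; nonexchangeable reset-free dynamics are dominated by exchangeable reset-free dynamics). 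You instead stay with the general heterogeneous dynamics: you kill the reset contribution pathwise by the sign argument $\lambda_{m,i}\geq r_i$ (the pathwise counterpart of the paper's first domination), and you resolve the cross-neuron coupling by the binomial expansion, Young's inequality, and a cooperative vector Gr\"onwall comparison, rather than by domination by an exchangeable system. Your route costs more bookkeeping but is more self-contained: it does not require making the paper's second domination coupling precise (which implicitly needs a common upper bound on the $\mu_{j\to i}$ and identical neuron parameters), and your localization step legitimizing the expectations is a point the paper passes over silently. One further remark: the statement's bound $Q_p(\E[\lambda_{m,i}(0)])$ in terms of the \emph{first} initial moment is obtained in the paper by silently substituting $\E[\lambda_{m,i}(0)]^2$ for $\E[\lambda_{m,i}^2(0)]$; your closing comment (deterministic initial data, or absorbing the finite higher initial moments guaranteed by Assumption \ref{Ass_2} into the coefficients) addresses this more carefully than the paper itself, so it is not a gap on your side.
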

\begin{proof}
We first prove the result for $p=1$ in the special case where exchangeability also holds between neurons and where there are no resets. Namely, we temporarily consider here that for all $i,j \in \{1,\ldots,K\}, \mu_{j\rightarrow i}=1$.
Let $t\in [0,T]$.
Thus, we have
\begin{equation*}
\E[\lambda_{m,i}(t)]=\E[\lambda_{m,i}(0)]+\sum_{n \neq m} \sum_{j \neq i} \E\left[\int_0^t \one_{\{V_{(n,j)\rightarrow i}(s)=m\}} N_{n,j}(\diff s)\right].\\
\end{equation*}

Denoting by $((T_{n,j})_r)_{r\in \Z}$ the points of $N_{n,j},$ we have
\begin{equation*}
\begin{split}
\E[\lambda_{m,i}(t)]&=\E[\lambda_{m,i}(0)]\\
&+\sum_{n \neq m} \sum_{j \neq i} \E\left[\sum_{r\in \Z}\E[ \one_{\{V_{(n,j)\rightarrow i}((T_{n,j})_r)=m\}} \one_{\{(T_{n,j})_r\in [0,t)\}}|\mathcal{F}^{N_{n,j}}_{(-\infty,(T_{n,j})_r}]\right].
\end{split}
\end{equation*}

Using the predictability w.r.t. the network history of the processes $V^M_{(n,j)\rightarrow i}$, we have
\begin{equation*}
\E[\lambda_{m,i}(t)]=\E[\lambda_{m,i}(0)]+\sum_{n \neq m} \sum_{j \neq i} \E\left[\sum_{r\in \Z}\one_{\{V_{(n,j)\rightarrow i}((T_{n,j})_r)=m\}} \one_{\{(T_{n,j})_r\in [0,t)\}}\right].
\end{equation*}
Using the property of stochastic intensity, we can rewrite this as
\begin{equation*}
\E[\lambda_{m,i}(t)]=\E[\lambda_{m,i}(0)]+(K-1) \E\left[\int_0^t \lambda_{m,i}(s)\diff s\right].
\end{equation*}
Therefore, 
\begin{equation*}
\E[\lambda_{m,i}(t)]= \E[\lambda_{m,i}(0)]+(K-1)\int_0^t \E[\lambda_{m,i}(s)]\diff s.
\end{equation*}

This gives
\begin{equation}
\label{eq_gronwall_1}
\E[\lambda_{m,i}(t)]\leq \E[\lambda_{m,i}(0)]e^{(K+r_i-1)T}:=Q_1(\E[\lambda_{m,i}(0)]).
\end{equation}
Now, let us write the differential equation for $\lambda_{m,i}^2$, still considering the dynamics without resets and with equal weights:
\begin{equation*}
\lambda_{m,i}^{2}(t)=\lambda_{m,i}^{2}(0)+\sum_{n \neq m} \sum_{j \neq i} \int_0^t \one_{\{V_{(n,j)\rightarrow i}(s)=m\}}\left(2\lambda_{n,j}(s)+1\right) N_{n,j}(\diff s).
\end{equation*}
Therefore, we have
\begin{equation*}
\E[\lambda_{m,i}^{2}(t)]=\E[\lambda_{m,i}^{2}(0)]+(K-1) \int_0^t \left(2\E[\lambda_{m,i}^2(s)]+\E[\lambda_{m,i}(s)]\right)\diff s 
\end{equation*}
which gives using \eqref{eq_gronwall_1} the bound
\begin{equation*}
\E[\lambda_{m,i}^{2}(t)]\leq \E[\lambda_{m,i}^{2}(0)]+(K-1)Q_1((\E[\lambda_{m,i}(0)])T+2(K-1)\int_0^t \E[\sup_{u \in [0,s]}\lambda_{m,i}^2(u)] \diff s.
\end{equation*}

By applying Gronwall's lemma and using the assumption on the initial conditions, we get
\begin{equation*}
\E[\lambda_{m,i}^{2}(t)]\leq \left((\E[\lambda_{m,i}(0)]^2+(K-1)Q_1((\E[\lambda_{m,i}(0)])T\right)e^{2(K-1)T}:=Q_2((\E[\lambda_{m,i}(0)]).
\end{equation*}
This reasoning can be extended by induction to all $p\geq 3$, which proves the result for the case of exchangeable interactions. 

Now, to get the result in the general case where all $\mu_{j\rightarrow i}$ are not necessarily all equal to 1, note that by monotonicity the dynamics that we consider (in both the exchangeable and nonexchangeable cases) are stochastically dominated by the same dynamics without the reset terms. Finally, note that nonexchangeable dynamics without the reset terms are stochastically dominated by exchangeable dynamics without the reset terms. This shows that the moment bounds still hold in the general case.
\end{proof}
Finally, note that the exact same reasoning can be applied to obtain an equivalent result for the limit process, which we will only state:
\begin{lemma}
\label{lem_limit_moment_bound}
For all $p\geq 1$, for all $i \in \{1,\ldots,K\}$, for all $t \in [0,T]$, there exists a polynomial $\tilde{Q}_p \in \mathbb{R}_p[X]$ a polynomial of degree exactly $p$ such that
\begin{equation}
\label{eq_gronwall_tilde_p_cFIAP}
\E[\tilde{\lambda}_{i}^{p}(t)]\leq \tilde{Q}_p[\E[\tilde{\lambda}_{i}(0)]].
\end{equation}
\end{lemma}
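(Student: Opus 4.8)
The plan is to mirror the proof of Lemma~\ref{lem_moment_bound} almost verbatim, using the fact that in the limit dynamics \eqref{eq_limit_eq_nonexch} the arrival processes $\tilde{A}_{j\rightarrow i}$ carry the \emph{deterministic} intensity $\E[\tilde{\lambda}_j(s)]$, which makes the estimate, if anything, simpler. Fix $p\geq1$ and $i\in\{1,\dots,K\}$. Applying the change-of-variables formula for pure-jump processes to $t\mapsto\tilde{\lambda}_i^p(t)$ along the jumps of \eqref{eq_limit_eq_nonexch} — a point of $\tilde{A}_{j\rightarrow i}$ increments $\tilde{\lambda}_i$ by $\mu_{j\rightarrow i}$, a point of $\tilde{N}_i$ resets it to $r_i$ — gives
\begin{equation*}
\tilde{\lambda}_i^p(t)=\tilde{\lambda}_i^p(0)+\sum_{j\neq i}\int_0^t\bigl[(\tilde{\lambda}_i(s^-)+\mu_{j\rightarrow i})^p-\tilde{\lambda}_i(s^-)^p\bigr]\tilde{A}_{j\rightarrow i}(\diff s)+\int_0^t\bigl[r_i^p-\tilde{\lambda}_i(s^-)^p\bigr]\tilde{N}_i(\diff s).
\end{equation*}
Since $\tilde{\lambda}_i(0)=Z_i\geq b_i\geq r_i$ and the jumps of $\tilde{\lambda}_i$ are upward increments or resets to $r_i$, we have $\tilde{\lambda}_i(s)\geq r_i$ almost surely for all $s$, so $(r_i^p-\tilde{\lambda}_i(s)^p)\tilde{\lambda}_i(s)\leq0$; taking expectations and using the stochastic intensity property for $\tilde{N}_i$ (intensity $\tilde{\lambda}_i$) together with the compensation formula for $\tilde{A}_{j\rightarrow i}$ (deterministic intensity $\E[\tilde{\lambda}_j(s)]$, predictable integrand), the reset term is nonpositive and may be dropped:
\begin{equation*}
\E[\tilde{\lambda}_i^p(t)]\leq\E[\tilde{\lambda}_i^p(0)]+\sum_{j\neq i}\int_0^t\E\bigl[(\tilde{\lambda}_i(s)+\mu_{j\rightarrow i})^p-\tilde{\lambda}_i(s)^p\bigr]\,\E[\tilde{\lambda}_j(s)]\,\diff s.
\end{equation*}
As in Lemma~\ref{lem_moment_bound}, to justify these identities one first localizes at the $n$-th jump time (or at $\inf\{s:\tilde{\lambda}_i(s)>n\}$), where the process has finitely many jumps and bounded increments, and then lets $n\to\infty$ by monotone convergence; the base term $\E[\tilde{\lambda}_i^p(0)]=\E[Z_i^p]$ is finite by Assumption~\ref{Ass_2}.

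For $p=1$ the bracket equals $\mu_{j\rightarrow i}$, so $\E[\tilde{\lambda}_i(t)]\leq\E[\tilde{\lambda}_i(0)]+\sum_{j\neq i}\mu_{j\rightarrow i}\int_0^t\E[\tilde{\lambda}_j(s)]\,\diff s$, which couples the $K$ first moments. Setting $\varphi(t)=\max_{1\leq j\leq K}\E[\tilde{\lambda}_j(t)]$ and $c=(K-1)\max_{i,j}\mu_{j\rightarrow i}$ yields $\varphi(t)\leq\varphi(0)+c\int_0^t\varphi(s)\,\diff s$, and Gronwall's lemma gives $\E[\tilde{\lambda}_i(t)]\leq\varphi(0)e^{cT}$ for every $t\in[0,T]$. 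This is affine in $\max_j\E[\tilde{\lambda}_j(0)]$, hence (padding the leading coefficient if necessary) a polynomial $\tilde{Q}_1\in\mathbb{R}_1[X]$ of degree exactly $1$ whose coefficients absorb $K$, $T$, $\max\mu_{j\rightarrow i}$ and the other initial moments, the latter being uniformly controlled by Assumption~\ref{Ass_2} — exactly the analogue of \eqref{eq_gronwall_1}.

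For $p\geq2$ we induct on $p$, assuming \eqref{eq_gronwall_tilde_p_cFIAP} already proved for all orders $k\leq p-1$ and all neurons, uniformly on $[0,T]$. Expanding $(\tilde{\lambda}_i(s)+\mu_{j\rightarrow i})^p-\tilde{\lambda}_i(s)^p=\sum_{k=0}^{p-1}\binom{p}{k}\mu_{j\rightarrow i}^{p-k}\tilde{\lambda}_i(s)^k$, every term has degree at most $p-1$ in $\tilde{\lambda}_i$, so each integral $\int_0^t\E[\tilde{\lambda}_i(s)^k]\,\E[\tilde{\lambda}_j(s)]\,\diff s$ is bounded by a constant depending only on $T$, $K$, the $\mu_{j\rightarrow i}$ and the polynomials $\tilde{Q}_1,\dots,\tilde{Q}_{p-1}$; consequently $\E[\tilde{\lambda}_i^p(t)]\leq\E[\tilde{\lambda}_i^p(0)]+C_p$ for all $t\in[0,T]$, with no further Gronwall step — this is precisely where the deterministic arrival intensities make the limit computation strictly simpler than its RMF counterpart. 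Collecting the leading contribution $\E[\tilde{\lambda}_i^p(0)]$ and the lower-order constants produces the desired $\tilde{Q}_p\in\mathbb{R}_p[X]$ of degree exactly $p$, uniformly in $i$ and $t\in[0,T]$. The only delicate point is not the algebra but the a priori integrability needed to write the moment identities — handled by the localization above — and keeping all estimates uniform over the $K$ neurons when the initial conditions differ; no idea beyond the proof of Lemma~\ref{lem_moment_bound} is involved, which is why the text states the lemma without proof.
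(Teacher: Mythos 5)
Your proof is correct and, in substance, is exactly what the paper intends: the paper states this lemma without proof, saying only that ``the exact same reasoning'' as in Lemma \ref{lem_moment_bound} applies, i.e., a moment identity obtained from the jump dynamics via the stochastic intensity/compensation formula, followed by an induction on $p$ with Gronwall, and that is precisely your skeleton. Your small deviations are legitimate and, if anything, streamline the argument: you dispose of the reset term by the pathwise observation $\tilde{\lambda}_i\geq r_i$ (so $(r_i^p-\tilde{\lambda}_i^p)\tilde{\lambda}_i\leq 0$) rather than by the stochastic-domination reduction to exchangeable, reset-free dynamics used in the proof of Lemma \ref{lem_moment_bound}; you handle the coupling of first moments across neurons by a Gronwall bound on $\max_j\E[\tilde{\lambda}_j(t)]$ instead of passing to exchangeable weights; and you correctly note that for $p\geq 2$ the deterministic arrival intensities $\E[\tilde{\lambda}_j(s)]$ make any further Gronwall step unnecessary once the lower-order bounds are available, whereas the RMF argument needs it at every order. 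One cosmetic caveat, which you share with the paper's own treatment: expressing the final bound as a polynomial of degree exactly $p$ in $\E[\tilde{\lambda}_i(0)]$ silently absorbs the higher initial moments $\E[\tilde{\lambda}_i^k(0)]$, $k\leq p$, into the coefficients via Assumption \ref{Ass_2}, which you acknowledge; this is no worse than what is done in the proof of Lemma \ref{lem_moment_bound}.
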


Lemma \ref{lem_moment_bound} allows us to prove the following result, which states that Assumption \ref{Ass_2} can be propagated to any time $t$ less than some fixed $T.$

\begin{lemma}
\label{lem_exp_moment_bound}
There exists $T>0$ and $\xi_0>0$ (which is the same as in Assumption \ref{Ass_2}) such that for $ t\in [0,T]$ and $\xi\leq \xi_0,$
\begin{equation}
\label{eq_exp_mom_bound}
\E[e^{\xi\lambda_{m,i}(t)}]<\infty \textrm{ and } \E[e^{\xi\tilde{\lambda}_{i}(t)}]<\infty.
\end{equation}
\end{lemma}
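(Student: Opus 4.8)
The plan is to use the crude pathwise bound $\lambda_{m,i}(t)\le (Z_i\vee r_i)+A^M_{m,i}(t)$, which holds for all $t$: under Assumption \ref{Ass_1} there is no drift between jumps, each reset sends $\lambda_{m,i}$ to the constant $r_i$, so the contribution accumulated since the last reset is at most the total arrival mass $A^M_{m,i}(t)$. The analogous bound $\tilde\lambda_i(t)\le (Z_i\vee r_i)+\tilde A_i(t)$ holds for the limit process. Consequently $\E[e^{\xi\lambda_{m,i}(t)}]\le e^{\xi r_i}\,\E[e^{\xi Z_i}e^{\xi A^M_{m,i}(t)}]$, and a single application of Cauchy--Schwarz together with Assumption \ref{Ass_2} reduces the whole statement to exponential-moment bounds for $A^M_{m,i}(t)$ and $\tilde A_i(t)$ on $[0,T]$ (at the price of shrinking the admissible exponent; a short bootstrap described below recovers a genuine $\xi_0$-range).

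For the limit process this is immediate. We have $\tilde A_i(t)=\sum_{j\neq i}\mu_{j\rightarrow i}\tilde A_{j\rightarrow i}(t)$, and from the coupling \eqref{eq_limit_coupling} each $\tilde A_{j\rightarrow i}$ is a functional of the embedding $\hat N_{j,i}$ alone together with the deterministic intensity $s\mapsto\E[\tilde\lambda_j(s)]$, which Lemma \ref{lem_limit_moment_bound} (case $p=1$) bounds on $[0,T]$ by $\tilde Q_1(\E[\tilde\lambda_j(0)])<\infty$. Hence the $(\tilde A_{j\rightarrow i})_{j\neq i}$ are mutually independent, independent of $Z_i$, and each $\tilde A_{j\rightarrow i}(t)$ is a genuine Poisson variable of mean $a_j(t)\le t\,\tilde Q_1(\E[\tilde\lambda_j(0)])$, so $\E[e^{\xi\tilde\lambda_i(t)}]\le e^{\xi r_i}\E[e^{\xi Z_i}]\prod_{j\neq i}\exp\!\big(a_j(t)(e^{\xi\mu_{j\rightarrow i}}-1)\big)<\infty$ for every $\xi\le\xi_0$.

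For the RMF process the crux is a bound on $\E[e^{\theta A^M_{m,i}(t)}]$ that is uniform in $M$. Set $\bar\mu=\max_{i,j}\mu_{j\rightarrow i}$. Using \eqref{eq_arr_rep} and Lemma \ref{lem_cond_indep_bern}, conditionally on the spike counts $(N_{n,j}([0,t]))_{n\neq m,\,j\neq i}$ the routing variables are i.i.d.\ Bernoulli$(\tfrac1{M-1})$, so $A^M_{m,i}(t)$ is stochastically dominated by $\bar\mu\,\mathrm{Bin}\!\big(W_{m,i}(t),\tfrac1{M-1}\big)$ with $W_{m,i}(t)=\sum_{n\neq m,\,j\neq i}N_{n,j}([0,t])\le N_{\mathrm{tot}}(t):=\sum_{m',i'}N_{m',i'}([0,t])$, whence $\E[e^{\theta A^M_{m,i}(t)}]\le \E\big[e^{\frac{e^{\theta\bar\mu}-1}{M-1}N_{\mathrm{tot}}(t)}\big]$. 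Now $N_{\mathrm{tot}}$ is a counting process whose stochastic intensity, using $\lambda_{m',i'}(s)\le (Z_{i'}\vee r_{i'})+A^M_{m',i'}(s)$ and the fact that every spike generates exactly $K-1$ arrivals, is at most $M\sum_{i'}(Z_{i'}\vee r_{i'})+(K-1)\bar\mu\,N_{\mathrm{tot}}(s^-)$. Choosing $T<\big((K-1)\bar\mu\big)^{-1}$, the process $N_{\mathrm{tot}}$ is, conditionally on $(Z_i)$, stochastically dominated by the population at time $T$ of a subcritical continuous-time branching process with $\mathrm{Poisson}\big((K-1)\bar\mu T\big)$ offspring law, started from a $\mathrm{Poisson}\big(MT\sum_i(Z_i\vee r_i)\big)$-distributed number of founders; writing $\psi$ for the MGF of the total progeny of one founder (finite near $0$, with $\psi(s)-1=O(s)$, by subcriticality), one gets $\E[e^{sN_{\mathrm{tot}}(t)}\mid(Z_i)]\le e^{MT(\psi(s)-1)\sum_i(Z_i\vee r_i)}$. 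For $s=\frac{e^{\theta\bar\mu}-1}{M-1}$ the prefactor $M(\psi(s)-1)$ stays bounded in $M\ge2$ by $2Tc_\psi(e^{\theta\bar\mu}-1)$ with $c_\psi$ independent of $M$, so that after averaging over $(Z_i)$ (Hölder over the $K$ coordinates and Assumption \ref{Ass_2}) we obtain $\sup_{M\ge2}\sup_{m,i}\sup_{t\le T}\E[e^{\theta A^M_{m,i}(t)}]<\infty$ for all $\theta$ below an explicit $M$-independent threshold $\theta_0>0$; combined with the first paragraph this proves \eqref{eq_exp_mom_bound}, with $\xi_0$ replaced by $\min(\xi_0,\theta_0/(2\bar\mu))$ if necessary.

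The main obstacle is precisely this uniformity in $M$, and it dictates keeping the $\tfrac1{M-1}$ routing thinning inside every estimate: replacing $A^M_{m,i}(t)$ by the crude bound $\bar\mu N_{\mathrm{tot}}(t)$ would leave $\E[e^{\theta\bar\mu N_{\mathrm{tot}}(t)}]\le e^{MT(\psi(\theta\bar\mu)-1)\sum_i(Z_i\vee r_i)}$, whose finiteness forces $\theta=O(1/M)$ and yields the lemma only on a vanishing range of exponents; it is the $\tfrac1{M-1}$ in the binomial probability that cancels the factor $M$ produced by the $M$ replicas' worth of spikes inside $N_{\mathrm{tot}}$. A secondary, technical, nuisance is that $Z_i$ is not independent of $A^M_{m,i}(t)$ — the dependence cascades through intermediate replicas — which forces the Cauchy--Schwarz step and the possible mild shrinking of $\xi_0$. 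If the sharper constant is desired, one reruns the third-paragraph estimate with $W_{m,i}(t)$ expanded through $N_{n,j}([0,t])$ being conditionally $\mathrm{Poisson}\big(\int_0^t\lambda_{n,j}(s)\,ds\big)$ and $\int_0^t\lambda_{n,j}(s)\,ds\le t\big((Z_j\vee r_j)+A^M_{n,j}(t)\big)$, which produces a self-referential inequality $\E[e^{\theta A^M_{m,i}(t)}]\le \mathrm{const}\cdot\sup_{n,j,\,t\le T}\E[e^{g(\theta)A^M_{n,j}(t)}]^{1/2}$ with $g(\theta)<\theta$ for small $\theta$; iterating $g$ down into the already-controlled interval $[0,\theta_0]$ and unwinding then extends finiteness up to the desired exponent.
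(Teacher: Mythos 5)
Your proposal is correct in substance, but it follows a genuinely different route from the paper's. The paper first reduces, by stochastic domination, to exchangeable dynamics without resets and with equal weights, then writes the SDE satisfied by $e^{\xi\lambda_{m,i}(t)}$ itself, takes expectations via the stochastic-intensity property and replica exchangeability, decouples the resulting term $\E[e^{\xi\lambda_{m,i}(s)}\lambda_{m,j}(s)]$ into a product, bounds $\E[\lambda_{m,j}(s)]$ by Lemma \ref{lem_moment_bound}, and closes with Gr\"onwall's lemma; this keeps the full range $\xi\le\xi_0$ of Assumption \ref{Ass_2} directly. You instead use the pathwise bound $\lambda_{m,i}(t)\le (Z_i\vee r_i)+A^M_{m,i}(t)$ (valid because under Assumption \ref{Ass_1} there is no drift and each reset moves the intensity to $r_i$), reduce the lemma to exponential moments of the arrival variables, and control those through the conditional-Bernoulli structure of Lemma \ref{lem_cond_indep_bern} — so that the $\frac{1}{M-1}$ thinning sits inside the exponent — combined with a cluster/branching (linear-Hawkes-type) domination of the total spike count, which is where your restriction $T<((K-1)\bar\mu)^{-1}$ enters; for the limit process, the observation that each $\tilde{A}_{j\rightarrow i}(t)$ is a genuine Poisson variable with mean bounded via Lemma \ref{lem_limit_moment_bound} and independent of $Z_i$ settles the second half of \eqref{eq_exp_mom_bound} immediately. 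What your route buys: bounds that are explicitly uniform in $M$, no reduction to auxiliary no-reset exchangeable dynamics, and no Gr\"onwall step applied to $\E[e^{\xi\lambda}]$ (which in the paper implicitly requires an a priori finiteness/localization argument and a correlation-based decoupling). What it costs: the Cauchy--Schwarz and H\"older steps shrink the admissible exponent, so the clause ``$\xi_0$ the same as in Assumption \ref{Ass_2}'' is only recovered through your sketched bootstrap, and the branching-domination step (coupling a counting process with intensity bounded by $a+bN(s^-)$ to a Galton--Watson forest with Poisson offspring, plus finiteness of the total-progeny MGF near $0$ in the subcritical case) is asserted rather than proved; both points are standard and harmless here, since the downstream uses of the lemma (e.g.\ the Chernoff step in Lemma \ref{lem_moment_equality}) only need some positive exponential-moment threshold for sufficiently small $T$.
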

\begin{proof}
To prove this result, we once again consider exchangeable dynamics without resets, using the same observation as previously, namely that nonexchangeable dynamics with resets are stochastically dominated by exchangeable dynamics without resets, to generalize the result. Let us thus assume $\mu_{j\rightarrow i}=\mu$ for all $i,j.$
Let $\xi_0$ as in Assumption \ref{Ass_2}. Let $t\in [0,T].$
Let us write out the equation verified by $e^{\xi\lambda_{m,i}(t)}$:
\begin{equation*}
e^{\xi\lambda_{m,i}(t)}=e^{\xi\lambda_{m,i}(0)}+\sum_{j\neq i}\sum_{n\neq m}\int_0^t\one_{\{V^M_{(n,j)\rightarrow i}(s)=m\}}e^{\xi\lambda_{m,i}(s)}(e^{\xi\mu}-1)N_{n,j}(\diff s).
\end{equation*}

Taking the expectation and using the stochastic intensity property, we get
\begin{equation*}
\E[e^{\xi\lambda_{m,i}(t)}]=\E[e^{\xi\lambda_{m,i}(0)}]+\frac{1}{M-1}\sum_{j\neq i}\sum_{n\neq m}\int_0^t\E[e^{\xi\lambda_{m,i}(s)}(e^{\xi\mu}-1)\lambda_{n,j}(s)]\diff s.
\end{equation*}
Using exchangeability between replicas, this boils down to
\begin{equation*}
\E[e^{\xi\lambda_{m,i}(t)}]=\E[e^{\xi\lambda_{m,i}(0)}]+\sum_{j\neq i}\int_0^t\E[e^{\xi\lambda_{m,i}(s)}(e^{\xi\mu}-1)\lambda_{m,j}(s)]\diff s.
\end{equation*}
Since we are looking at dynamics without resets, $\lambda_{m,i}(s)$ and $\lambda_{m,j}(s)$ are positively correlated. Therefore, we have
\begin{equation*}
\E[e^{\xi\lambda_{m,i}(t)}]\leq \E[e^{\xi\lambda_{m,i}(0)}]+(e^{\xi\mu}-1)\sum_{j\neq i}\int_0^t\E[e^{\xi\lambda_{m,i}(s)}]\E[\lambda_{m,j}(s)]\diff s.
\end{equation*}
By Lemma \ref{lem_moment_bound} and Assumption \ref{Ass_2}, we have the existence of a constant $B>0$ such that
\begin{equation*}
\E[e^{\xi\lambda_{m,i}(t)}]\leq \E[e^{\xi\lambda_{m,i}(0)}]+(e^{\xi\mu}-1)(K-1)B\int_0^t\E[e^{\xi\lambda_{m,i}(s)}]\diff s.
\end{equation*}
The desired result follows from Grönwall's lemma.
\end{proof}

The final lemma we will state and prove in this section concerns the means of the RMF and limit processes. Namely, we show that the replica mean-field construction preserves the mean as $M$ varies and that the mean of the limit process coincides with the mean of the RMF process.

\begin{lemma}
\label{lem_moment_equality}
For all $M \geq 2,$ for all $(m,i) \in \{1,\ldots,M\}\times\{1,\ldots,K\},$ if the initial conditions are such that for all $(m,i) \in \{1,\ldots,M\}\times\{1,\ldots,K\},$ 
\begin{equation*}
\lambda_{m,i}(0)=\tilde{\lambda}_i(0),
\end{equation*}
then there exists $T \in \R$, such that for all $t \in [0,T],$ we have
\begin{equation*}
\E[A_{m,i}(t)]=\E[\tilde{A}_i(t)]
\end{equation*}  
and 
\begin{equation*}
\E[\lambda_{m,i}(t)]=\E[\tilde{\lambda}_i(t)].
\end{equation*}
\end{lemma}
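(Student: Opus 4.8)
The plan is to prove the two equalities together, by first reducing the statement about the arrival processes to the one about the intensities, and then establishing the intensity equality through the hierarchy of moments.

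First I would compute $\E[A_{m,i}(t)]$. Starting from \eqref{eq_RMF_LGL_SDE} (or from the representation \eqref{eq_arr_rep}) and using that each routing process $V^M_{(n,j)\to i}$ is $\mathcal{F}^N$-predictable with $\P\bigl(V^M_{(n,j)\to i}(s)=m\mid\mathcal{F}_{s^-}\bigr)=\tfrac1{M-1}$ for $n\neq m$, together with the stochastic intensity property \eqref{eq_stoch_int} for $N^M_{n,j}$, one obtains
\[
\E[A_{m,i}(t)]=\sum_{j\neq i}\mu_{j\to i}\,\frac1{M-1}\sum_{n\neq m}\int_0^t\E[\lambda_{n,j}(s)]\diff s=\sum_{j\neq i}\mu_{j\to i}\int_0^t\E[\lambda_{m,j}(s)]\diff s,
\]
the last equality by exchangeability of the replicas. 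On the limit side $\tilde A_{j\to i}$ is Poisson with mean measure $a_j(t)=\int_0^t\E[\tilde\lambda_j(s)]\diff s$, so $\E[\tilde A_i(t)]=\sum_{j\neq i}\mu_{j\to i}\int_0^t\E[\tilde\lambda_j(s)]\diff s$. Hence the first equality is a consequence of the second one applied at all times $s\le t$, and it remains only to prove $\E[\lambda_{m,i}(t)]=\E[\tilde\lambda_i(t)]$.

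Taking expectations in \eqref{eq_rmf_coupling}, and using \eqref{eq_stoch_int} and exchangeability as above, $g^M_i(t):=\E[\lambda_{m,i}(t)]$ satisfies
\[
g^M_i(t)=\E[Z_i]+\sum_{j\neq i}\mu_{j\to i}\int_0^t g^M_j(s)\diff s+r_i\int_0^t g^M_i(s)\diff s-\int_0^t\E[\lambda_{m,i}^2(s)]\diff s,
\]
and $t\mapsto\E[\tilde\lambda_i(t)]$ satisfies the identical relation with every term replaced by its tilde analogue and the same initial datum $\E[Z_i]$. The reset term makes this relation non-closed (it involves the second moment), so I would carry the whole family $\E[\lambda_{m,i}^p(t)]$, $p\ge1$ — equivalently the moment generating functions $\E[e^{\xi\lambda_{m,i}(t)}]$, finite on $[0,T]\times[0,\xi_0]$ by Lemma~\ref{lem_exp_moment_bound} — and derive, via Dynkin's formula for the pure-jump dynamics and \eqref{eq_stoch_int}, the integral equation for $\E[\lambda_{m,i}^p(t)]$. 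Its right-hand side involves lower moments $\E[\lambda_{m,i}^l(s)]$ ($l\le p+1$), the means $\E[\lambda_{n,j}(s)]$, and cross-terms $\E[\lambda_{m,i}^l(s)\lambda_{n,j}(s)]$ with $n\neq m$, $j\neq i$; the crucial point, where Lemma~\ref{lem_cond_indep_bern} enters, is that after conditioning on the source counts $N$ — on which the routing Bernoullis are i.i.d. with mean $\tfrac1{M-1}$ — these cross-terms collapse to products $\E[\lambda_{m,i}^l(s)]\,\E[\lambda_{m,j}(s)]$, which is precisely the form taken in the limit hierarchy, where the collapse is automatic since the arrival intensities $\E[\tilde\lambda_j(s)]$ are deterministic.

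Having matched the two hierarchies term by term, the families $\bigl(t\mapsto\E[\lambda_{m,i}^p(t)]\bigr)_{i,p}$ and $\bigl(t\mapsto\E[\tilde\lambda_i^p(t)]\bigr)_{i,p}$ solve the same infinite system of integral equations with the same initial data $\E[Z_i^p]$; using the polynomial bounds of Lemmas~\ref{lem_moment_bound}--\ref{lem_limit_moment_bound} and the exponential bound of Lemma~\ref{lem_exp_moment_bound} to control the growth in $p$ (so that the system sums against $\xi^p/p!$ and a Grönwall estimate applies), I would conclude by uniqueness that the two hierarchies coincide on $[0,T]$ for the $T$ furnished by Lemma~\ref{lem_exp_moment_bound}. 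Taking $p=1$ gives $\E[\lambda_{m,i}(t)]=\E[\tilde\lambda_i(t)]$, and feeding this back into the first step gives $\E[A_{m,i}(t)]=\E[\tilde A_i(t)]$. The main obstacle is precisely the reset term: it couples the equation for the $p$-th moment to the $(p+1)$-th one, so the mean equation cannot be closed on its own and one is forced to carry the full hierarchy (or the moment generating function) and control it uniformly in $p$ on a fixed interval — which is exactly what Lemma~\ref{lem_exp_moment_bound} provides and what pins down $T$. The second, more technical, difficulty is checking that every RMF cross-moment appearing in that hierarchy does reduce to a product of marginal moments through Lemma~\ref{lem_cond_indep_bern}, since this reduction — absent at the level of a single intensity — is what makes the finite-$M$ hierarchy identical to the limiting one.
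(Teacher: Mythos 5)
Your first step (reducing $\E[A_{m,i}(t)]=\E[\tilde A_i(t)]$ to the equality of intensity means via the stochastic intensity property, predictability of the routing, and exchangeability of replicas) matches the paper. The gap is in the second step. Your plan rests on the claim that, in the finite-$M$ moment hierarchy, every cross-replica term $\E[\lambda_{m,i}^l(s)\lambda_{n,j}(s)]$ ($n\neq m$) factorizes exactly into $\E[\lambda_{m,i}^l(s)]\,\E[\lambda_{m,j}(s)]$ as a consequence of Lemma \ref{lem_cond_indep_bern}, so that the RMF hierarchy and the limit hierarchy are literally the same system. That lemma does not give this: it only says that the routing Bernoulli variables toward a fixed target $(m,i)$ are conditionally i.i.d.\ given the vector of counts $N$; it says nothing about the joint law of the intensities in different replicas, which are genuinely correlated at finite $M$ (they are all functionals of the shared routing structure and of counts that feed back across replicas). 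Indeed, exact factorization of cross-replica moments at finite $M$ is essentially the asymptotic independence (propagation of chaos) that the paper is trying to prove in the limit $M\to\infty$, so the argument is circular at its key point; the difficulty you flag at the end as ``more technical'' is in fact the fatal one. Without it the two hierarchies are different (compare the $M$-replica MGF equation in the introduction, which manifestly couples replicas), and the uniqueness argument for the infinite, non-closed system has nothing to apply to.

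The paper avoids the hierarchy altogether. It works only with the mean equation, where the reset contributes the \emph{own} second moment $\E[\lambda_{m,i}^2-\tilde\lambda_i^2]=\E[(\lambda_{m,i}-\tilde\lambda_i)(\lambda_{m,i}+\tilde\lambda_i)]$, and controls this term by truncation: on the event where both intensities are below a level $C$ it is bounded by $2C$ times the (time-integrated) mean difference, while the complementary event is handled by Cauchy--Schwarz, the moment bounds of Lemmas \ref{lem_moment_bound}--\ref{lem_limit_moment_bound}, and a Chernoff bound using the exponential moments of Lemma \ref{lem_exp_moment_bound}, yielding a remainder of order $e^{-3CT}$. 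Taking the worst coordinate $i(t)$, applying Gr\"onwall, and then letting $C\to\infty$ for the fixed small $T$ (so that $e^{-3CT}$ beats the $e^{2CT}$ from Gr\"onwall) forces the mean difference to vanish. In particular no matching of second or higher moments is assumed; if anything, equality of (integrated) second moments is a consequence of the lemma, not an ingredient. If you want to salvage your approach, you would need to replace the exact factorization claim by a quantitative control of the cross-correlations, at which point you are back to an estimate of the type the paper obtains by truncation and Gr\"onwall.
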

\begin{proof}
Using both the property of stochastic intensity and exchangeability between replicas, we have as previously,
\begin{equation*}
\E[A_{m,i}(t)]=\sum_{j \neq i} \mu_{j\rightarrow i}\E[N_{m,j}([0,t))]=\sum_{j \neq i} \mu_{j\rightarrow i}\int_0^t\E[\lambda_{m,j}(s)]\diff s.
\end{equation*}
Similarly, we have
\begin{equation*}
\E[\tilde{A}_i(t)]=\sum_{j \neq i} \mu_{j\rightarrow i}\int_0^t\E[\tilde{\lambda}_{j}(s)]\diff s.
\end{equation*}
Therefore, we see that
\begin{equation}
\label{eq_diff_means_arrivals}
\left|\E[A_{m,i}(t)-\tilde{A}_i(t)]\right|=\left|\sum_{j \neq i} \mu_{j\rightarrow i}\int_0^t\E[\lambda_{m,j}(s)-\tilde{\lambda}_{j}(s)]\diff s\right|.
\end{equation}
Thus, it is sufficient to show that $\E[\lambda_{m,j}(s)-\tilde{\lambda}_{j}(s)]=0$.

Now, let $t\in [0,T]$.
Let $i(t)=\argmax_{j \in \{1,\ldots,K\}}\left|\E[\lambda_{m,j}(t)-\tilde{\lambda}_{j}(t)]\right|$ for $t \in [0,T]$. 
Now, using \eqref{eq_RMF_LGL_SDE} and \eqref{eq_limit_eq_nonexch}, we have
\begin{equation*}
\begin{split}
\left|\E[\lambda_{m,i(t)}(t)-\tilde{\lambda}_{i(t)}(t)]\right| &\leq \bigg|\E[\lambda_{m,i(t)}(0)-\tilde{\lambda}_{i(t)}(0)]+\E[A_{m,i(t)}(t)-\tilde{A}_{i(t)}(t)]\\
&+\E[\int_0^t \left(r_i-\lambda_{m,i(t)}(s)\right) \lambda_{m,i(t)}(s)-\left(r_i-\Tilde{\lambda}_{i(t)}(s)\right) \tilde{\lambda}_{i(t)}(s)\diff s]\bigg|\\
&\leq 0+\sum_{j\neq i(t)}\mu_{i(t),j}\int_0^t \left| \E[\lambda_{m,j}(s)-\tilde{\lambda}_{j}(s)]\right| \diff s\\
&+r_i\int_0^t \left| \E[\lambda_{m,i(t)}(s)-\tilde{\lambda}_{i(t)}(s)]\right| \diff s\\
&+\left|\int_0^t \E[\lambda_{m,i(t)}^2(s)-\tilde{\lambda}^2_{i(t)}(s)]\diff s\right|,
\end{split}
\end{equation*}
using the coupling on the initial conditions and \eqref{eq_diff_means_arrivals}. 

Let $C>0$. Let $A_C(t,[0,T]):=\{(\omega, t) \in \Omega \times [0,T]| \max(\lambda_{m,i(t)}(t, \omega),\tilde{\lambda}_{i(t)}(t, \omega))<C\}$. \\
Then, we write
\begin{equation*}
\begin{split}
\left|\int_0^t\E[\lambda_{m,i(t)}^2(s)-\tilde{\lambda}^2_{i(t)}(s)]\diff s\right|&=\bigg|\int_0^t\int_{A_C(s,[0,T])}(\lambda_{m,i(t)}^2(s)-\tilde{\lambda}^2_{i(t)}(s))\diff s P(\diff \omega)\\
&+\int_0^t\int_{A^c_C(s,[0,T])}(\lambda_{m,i(t)}^2(s)-\tilde{\lambda}^2_{i(t)}(s))\diff s P(\diff \omega)\bigg|.
\end{split}
\end{equation*}
Since we have $\E[\lambda_{m,i(t)}^2(s)-\tilde{\lambda}^2_{i(t)}(s)]=\E[(\lambda_{m,i(t)}(s)-\tilde{\lambda}_{i(t)}(s))(\lambda_{m,i(t)}(s)+\tilde{\lambda}_{i(t)}(s))]$, by definition of $A_C([0,T])$ the first term can be bounded by
\begin{equation}
\label{eq_first_bound}
\left|\int_0^t\int_{A_C(s,[0,T])}(\lambda_{m,i(t)}^2(s)-\tilde{\lambda}^2_{i(t)}(s))\diff s P(\diff \omega)\right| \leq 2C\left| \int_0^t\E[\lambda_{m,i(t)}(s)-\tilde{\lambda}_{i(t)}(s)]\diff s\right|.
\end{equation}

For the second term, we write
\begin{equation*}
\begin{split}
&\left|\int_0^t\int_{A^c_C(s,[0,T])}(\lambda_{m,i(t)}^2(s)-\tilde{\lambda}^2_{i(t)}(s))\diff s P(\diff \omega)\right|= \\
&\left|\int_0^t\E[(\lambda_{m,i(t)}^2(s)-\tilde{\lambda}^2_{i(t)}(s))\one_{A^c_C(s,[0,T])}]\diff s\right|.
\end{split}
\end{equation*}
Now, using the Cauchy-Schwarz inequality,
\begin{equation*}
\left|\E[(\lambda_{m,i(t)}^2(s)-\tilde{\lambda}^2_{i(t)}(s))\one_{A^c_C(s,[0,T])}]\right| \leq \sqrt{\E[(\lambda_{m,i(t)}^2(s)-\tilde{\lambda}^2_{i(t)}(s))^2]\E[\one_{A^c_C(s,[0,T])}]}.
\end{equation*}
Using Lemmas \ref{lem_moment_bound} and \ref{lem_limit_moment_bound} that give bounds on the moments of the considered processes, we have the existence of a positive constant $D_1(T)$ such that
\begin{equation*}
\sqrt{\E[(\lambda_{m,i(t)}^2(s)-\tilde{\lambda}^2_{i(t)}(s))^2]} \leq D_1(T).
\end{equation*}
For the last term, we have by definition of $A^c_C(s,[0,T]),$
\begin{equation*}
\E[\one_{A^c_C(s,[0,T])}]=\P(\max(\lambda_{m,i(t)}(s), \tilde{\lambda}_{i(t)}(s))>C).
\end{equation*}
By Lemma \ref{lem_exp_moment_bound}, there exists $T>0$ such that $\E[e^{6T\max(\lambda_{m,i(t)}(s), \tilde{\lambda}_{i(t)}(s))}]$ is finite.
Applying the Chernoff inequality, we have 
\begin{equation*}
\P(e^{6T\max(\lambda_{m,i(t)}(s), \tilde{\lambda}_{i(t)}(s))}>e^{6CT})\leq \E[e^{6T\max(\lambda_{m,i(t)}(s), \tilde{\lambda}_{i(t)}(s))}]e^{-6CT}.
\end{equation*}
Using Lemma \ref{lem_moment_bound}, this shows that there exists a constant $D_2(T)>0$ such that
\begin{equation*}
\sqrt{\E[\one_{A^c_C(s,[0,T])}]}\leq D_2(T)e^{-3CT}.
\end{equation*}
Combining the previous bounds, we finally obtain
\begin{equation}
\label{eq_second bound}
\left| \int_0^t\int_{A^c_C(s,[0,T])}\left(\lambda_{m,i(t)}^2(s)-\tilde{\lambda}^2_{i(t)}(s)\right)\diff s P(\diff \omega) \right|\leq D_1(T)D_2(T)Te^{-3CT}.
\end{equation}
Combining \eqref{eq_first_bound} and \eqref{eq_second bound}, we have
\begin{equation*}
\begin{split}
\left|\int_0^t\E[(\lambda_{m,i(t)}^2(s)-\tilde{\lambda}^2_{i(t)}(s))]\diff s\right|&\leq 2C\left|\int_0^t\E[\lambda_{m,i(t)}(s)-\tilde{\lambda}_{i(t)}(s)]\diff s\right|\\
&+D_1(T)D_2(T)Te^{-3CT}.
\end{split}
\end{equation*}

Therefore, we have
\begin{equation*}
\begin{split}
\left|\E[\lambda_{m,i(t)}(t)-\tilde{\lambda}_{i(t)}(t)]\right| &\leq D_1(T)D_2(T)Te^{-3CT}\\
&+\left(r_i+2C\right)\int_0^t \left| \E[\lambda_{m,i(t)}(s)-\tilde{\lambda}_{i(t)}(s)]\right| \diff s\\
&+\sum_{j\neq i(t)}\mu_{i(t),j}\int_0^t \left| \E[\lambda_{m,j}(s)-\tilde{\lambda}_{j}(s)]\right| \diff s.
\end{split}
\end{equation*}

By definition of $i(t)$, we then have

\begin{equation*}
\begin{split}
\left|\E[\lambda_{m,i(t)}(t)-\tilde{\lambda}_{i(t)}(t)]\right| &\leq D_1(T)D_2(T)e^{-3CT}T\\
&+\left(r_i+2C\right)\int_0^t \left| \E[\lambda_{m,i(s)}(s)-\tilde{\lambda}_{i(s)}(s)]\right| \diff s\\
&+\sum_{j\neq i(t)}\mu_{i(t),j}\int_0^t \left| \E[\lambda_{m,i(s)}(s)-\tilde{\lambda}_{i(s)}(s)]\right| \diff s.\\
&\leq D_1(T)D_2(T)e^{-3CT}T\\
&+\left(r_i+2C+\sum_{j\neq i(t)}\mu_{i(t),j}\right)\int_0^t \left| \E[\lambda_{m,i(s)}(s)-\tilde{\lambda}_{i(s)}(s)]\right| \diff s.
\end{split}
\end{equation*}

By Gronwall's lemma, we therefore get
\begin{equation*}
\left|\E[\lambda_{m,i(t)}(t)-\tilde{\lambda}_{i(t)}(t)]\right| \leq D_1(T)D_2(T)e^{-3CT}Te^{(r_i+2C+\sum_{j\neq i(t)}\mu_{i(t),j})T}.
\end{equation*}
Finally, note that for any $\epsilon >0$, we can now choose $C>0$ such that \begin{equation*}
D_1(T)D_2(T)Te^{-3CT}e^{(r_i+2C+\sum_{j\neq i(t)}\mu_{i(t),j})T}\leq \eps.
\end{equation*}
By the choice of $i(t)$, the result follows for any $1 \leq i \leq K.$
\end{proof}

\subsection*{Poisson approximation bound using the Chen-Stein method}
\label{subsec_Poisson_bound}
The goal of this section is to use the Chen-Stein method, which we will briefly recall, to obtain a bound in total variation distance between the arrivals term \eqref{eq_arr_rep} and the limit sum of Poisson random variables.
Recall that \eqref{eq_arr_rep} states that for all $t \in [0,T], m \in \{1,\ldots,M\}, i\in \{1,\ldots,K\},$ 
\begin{equation*}
A_{m,i}(t)=\sum_{n \neq m}\sum_{j \neq i}\mu_{j\rightarrow i}\sum_{k \leq N_{n,j}([0,t])}B^M_{k,(n,j)\rightarrow (m,i)}.
\end{equation*}

Recall that if $Z$ is a random variable taking values in $\N$ with $\E[Z]<\infty$, $Z$ is a Poisson random variable iff the distribution of $Z+1$ is equal to the distribution of the size-biased version of $Z$, in other words, iff for all bounded functions $f$ on $\N$,
\begin{equation}
\label{eq_chen_stein}
\E[Z]\E[f(Z+1)]=\E[f(Z)Z].
\end{equation}
The key principle of the Chen-Stein method is to say that if \eqref{eq_chen_stein} holds approximately for some r.v. $Z$ for any bounded function $f$ on $\N$, then $Z$ approximately has a Poisson distribution.
In the case of a sum of Bernoulli random variables that are not necessarily independent, we have the following result:
\begin{lemma}
\label{lem_chenstein_standard}
Let $l\in \N.$
Consider $W=\sum_{i=1}^l Y_i,$ where $Y_i$ are Bernoulli random variables with respective means $p_i$, without any independence assumptions. Let $Z$ be a Poisson distributed random variable with mean $\E[W]=\sum_i p_i$.
For $1 \leq k \leq l$, let $U_k$ and $V_k$ be random variables on the same probability space such that $U_k$ has the same distribution as $W$ and $1+V_k$ has the same distribution as $W$ conditioned on the event $Y_k=1$ (with the convention $V_k=0$ if $\P(Y_k=1)=0$).
Then
\begin{equation*}
d_{TV}(W,Z)\leq \left(1 \wedge \frac{1}{\E[W]}\right)\sum_{i=1}^lp_i\E[\left|U_i-V_i\right|].
\end{equation*}
\end{lemma}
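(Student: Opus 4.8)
The plan is to run the classical Chen--Stein machinery for Poisson approximation, the only nonstandard ingredient being the bookkeeping of the size-biasing through the couplings $(U_k,V_k)$. Write $\lambda=\E[W]=\sum_{i=1}^l p_i$ and let $\Pi_\lambda$ denote the law of $Z$, i.e.\ the Poisson distribution with mean $\lambda$. For a set $A\subseteq\N$, introduce the Stein equation
\begin{equation*}
\lambda f(j+1)-jf(j)=\one_A(j)-\Pi_\lambda(A),\qquad j\in\N,
\end{equation*}
which has a unique bounded solution $f_A$ (normalized by $f_A(0)=0$ and built by the usual recursion). The first step is to invoke the classical Stein factor estimate: the solution satisfies $\sup_{j\in\N}\lvert f_A(j+1)-f_A(j)\rvert\le \frac{1-e^{-\lambda}}{\lambda}\le 1\wedge\frac1\lambda$, and this bound is uniform over $A$. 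This is the one genuinely analytic point; I would either cite it from a standard reference on Poisson approximation or reproduce the short monotonicity argument controlling the forward differences of $f_A$.

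With this estimate granted, the characterization of total variation distance gives
\begin{equation*}
d_{TV}(W,Z)=\sup_{A\subseteq\N}\bigl\lvert\P(W\in A)-\Pi_\lambda(A)\bigr\rvert
=\sup_{A\subseteq\N}\bigl\lvert\E[\lambda f_A(W+1)-Wf_A(W)]\bigr\rvert,
\end{equation*}
so it suffices to bound $\E[\lambda f_A(W+1)-Wf_A(W)]$ uniformly in $A$. The key algebraic step is to expand $Wf_A(W)=\sum_{k=1}^l Y_kf_A(W)$ and to use the definition of the couplings. Since the conditional law of $W$ given $\{Y_k=1\}$ is the law of $1+V_k$, one has $\E[Y_kf_A(W)]=\E[f_A(W)\one_{\{Y_k=1\}}]=p_k\,\E[f_A(1+V_k)]$, which is trivially $0=0$ when $\P(Y_k=1)=0$ by the stated convention. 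On the other side, since each $U_k$ has the law of $W$, $\E[\lambda f_A(W+1)]=\sum_{k=1}^l p_k\,\E[f_A(W+1)]=\sum_{k=1}^l p_k\,\E[f_A(U_k+1)]$. Subtracting,
\begin{equation*}
\E[\lambda f_A(W+1)-Wf_A(W)]=\sum_{k=1}^l p_k\,\E\bigl[f_A(U_k+1)-f_A(V_k+1)\bigr].
\end{equation*}

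Finally, since $U_k$ and $V_k$ are $\N$-valued, telescoping the forward differences of $f_A$ between two integers shows $\lvert f_A(U_k+1)-f_A(V_k+1)\rvert\le \bigl(\sup_j\lvert f_A(j+1)-f_A(j)\rvert\bigr)\lvert U_k-V_k\rvert\le \bigl(1\wedge\tfrac1\lambda\bigr)\lvert U_k-V_k\rvert$, hence $\lvert\E[f_A(U_k+1)-f_A(V_k+1)]\rvert\le \bigl(1\wedge\tfrac1\lambda\bigr)\E[\lvert U_k-V_k\rvert]$. Summing over $k$ and taking the supremum over $A$ yields $d_{TV}(W,Z)\le\bigl(1\wedge\tfrac1\lambda\bigr)\sum_{i=1}^l p_i\,\E[\lvert U_i-V_i\rvert]$. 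I do not expect a real obstacle: once the Stein factor estimate is in place the rest is the exchange-of-summation identity above, and the only points requiring care are the degenerate case $\P(Y_k=1)=0$ (handled by the convention $V_k=0$) and the observation that the Lipschitz-type bound on $f_A$ is needed only at integer arguments, so no regularity of $f_A$ on all of $\R$ is required.
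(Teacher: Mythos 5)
Your proof is correct: the Stein equation, the size-bias bookkeeping $\E[Y_kf_A(W)]=p_k\E[f_A(1+V_k)]$ and $\lambda\E[f_A(W+1)]=\sum_k p_k\E[f_A(U_k+1)]$, the telescoping Lipschitz bound at integer arguments, and the Stein factor estimate $\sup_j|f_A(j+1)-f_A(j)|\le 1\wedge\tfrac1\lambda$ combine exactly as you say, with the degenerate case $\P(Y_k=1)=0$ harmless since then $p_k=0$. The paper does not prove this lemma itself (it cites standard references such as \cite{BarHolJan}), but its proof of the random-sum generalization in Lemma \ref{lem_poisson_chenstein_bound} uses precisely this Stein-equation-plus-coupling machinery, so your argument is essentially the same approach.
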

Then, it suffices to exhibit a coupling of $U_i$ and $V_i$ such that $\E[|U_i-V_i|]$ is small.
We refer to \cite{Bartek_notes}, \cite{lindvall2002} or \cite{BarHolJan} for a comprehensive overview of the Chen-Stein method.

We now adapt the Chen-Stein method to the replica-mean-field framework, generalizing the method to the case of a random sum of Bernoulli random variables.

We first give a result that will be an immediate corollary of the lemma we prove afterwards to compare it with Lemma \ref{lem_chenstein_standard}:
\begin{lemma}
\label{lem_cor_rand_chenstein}
Let $L$ be a $\N$-valued random variable such that $\E[L]<\infty$. Let $(Y_i)_{i \in \N}$ be random variables such that for any $l \in \N$, conditionally on the event $\{L=l\}$, $Y_i$ are Bernoulli random variables with respective means $p_i$.
Consider $W=\sum_{i=1}^L Y_i.$Let $Z$ be a Poisson distributed random variable with mean $\E[W]=\E[\sum_i^L p_i]$.
For $k \in \N$, let $U_k$ and $V_k$ be random variables on the same probability space such that for $l \in \N$, conditionally on the event $\{L=l\}$, $U_k$ has the same distribution as $W$ and $1+V_k$ has the same distribution as $W$ conditioned on the event $Y_k=1$ (with the convention $V_k=0$ if $\P(Y_k=1|L=l)=0$).
Then
\begin{equation*}
d_{TV}(W,Z)\leq \left(1 \wedge \frac{0.74}{\E[W]}\right)\E\left[\left|L-\E[L]\right|\right]+\left(1 \wedge \frac{1}{\E[W]}\right)\E\left[\sum_{i=1}^L p_i\E\left[\left|U_i-V_i\right|\big|L\right]\right].
\end{equation*}
\end{lemma}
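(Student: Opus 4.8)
The plan is to derive this as an immediate corollary of a more general Chen–Stein bound (the lemma announced as forthcoming in the text) by conditioning on $L$ and then handling the two sources of error separately: the randomness of the number of summands, and the dependence among the Bernoulli summands for a fixed number of them. First I would condition on $\{L=l\}$ and apply the standard Chen–Stein bound of Lemma \ref{lem_chenstein_standard} to $W_l := \sum_{i=1}^l Y_i$ versus a Poisson variable $Z_l$ with mean $\E[\sum_{i=1}^l p_i \mid L=l]$; this produces the term $\E[\sum_{i=1}^L p_i\,\E[|U_i-V_i| \mid L]]$ after taking expectation over $L$, with the prefactor $1\wedge \tfrac{1}{\E[W]}$ coming from the Stein factor bound $\|\Delta g_\lambda\|_\infty \le 1\wedge \lambda^{-1}$ applied with $\lambda = \E[W]$ (one should check that conditioning does not worsen this factor, using that $\E[W] = \E[\E[W\mid L]]$ and that the Stein solution's supremum bound is monotone in the right way, or simply bound each conditional Stein factor by $1\wedge \E[W]^{-1}$ directly via the uniform estimate).

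The second step is to control the discrepancy between a Poisson variable with the random mean $\Lambda := \E[W \mid L] = \sum_{i=1}^L p_i$ and a Poisson variable with the deterministic mean $\lambda = \E[W] = \E[\Lambda]$. Here I would invoke the classical fact that for Poisson laws $d_{TV}(\mathrm{Poi}(\lambda_1),\mathrm{Poi}(\lambda_2)) \le (1\wedge \tfrac{0.74}{\sqrt{\lambda_1\vee\lambda_2}})|\lambda_1-\lambda_2|$ — or, more directly usable here, the bound obtained by a coupling/thinning argument giving $d_{TV}(\mathrm{Poi}(\lambda_1),\mathrm{Poi}(\lambda_2)) \le |\lambda_1 - \lambda_2|$ with the improved constant $0.74$ coming from the same Stein-factor estimate (this $0.74$ is the known numerical bound on $\sup_\lambda \lambda^{1/2}\|\Delta g\|$-type quantities appearing in Barbour–Holst–Janson). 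Conditioning on $L$ and using that $d_{TV}$ is convex, one gets $\E[d_{TV}(\mathrm{Poi}(\Lambda),\mathrm{Poi}(\lambda))] \le (1\wedge \tfrac{0.74}{\lambda})\E[|\Lambda-\lambda|]$. In the special case relevant to the paper all $p_i$ are equal to $\tfrac1{M-1}$, so $\Lambda = L/(M-1)$ and $\E[|\Lambda-\lambda|] = \tfrac{1}{M-1}\E[|L-\E[L]|]$; in the stated generality one similarly identifies $\E[|\Lambda - \E[\Lambda]|]$, and since the lemma is stated with $\E[|L-\E[L]|]$ I would note the implicit normalization (the $p_i$ absorbed appropriately) or restrict to the equal-$p_i$ setting actually used downstream.

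The third step is the triangle inequality in $d_{TV}$:
\begin{equation*}
d_{TV}(W,Z) \le \E\big[d_{TV}(W\mid L,\ \mathrm{Poi}(\Lambda))\big] + \E\big[d_{TV}(\mathrm{Poi}(\Lambda),\mathrm{Poi}(\lambda))\big],
\end{equation*}
where again convexity of $d_{TV}$ under mixtures justifies pulling the conditional expectation outside (the law of $W$ is the $L$-mixture of the conditional laws, and likewise for $\mathrm{Poi}(\Lambda)$), combined with the two estimates above. I expect the main obstacle to be the bookkeeping of the Stein factors: making sure the prefactor $1\wedge\tfrac1{\E[W]}$ (resp. $1\wedge\tfrac{0.74}{\E[W]}$) is legitimately attached to the $L$-mixture rather than to each conditional law with its own mean $\Lambda$, since $\Lambda$ is random and can be much smaller than $\lambda$. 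The cleanest fix is to prove everything first at the level of the Stein equation with the target mean $\lambda=\E[W]$ fixed throughout — i.e. write the generator-based Chen–Stein identity for $\mathrm{Poi}(\lambda)$, plug in $W$, condition on $L$ inside the expectation, and split the resulting expression into a "wrong number of points" part (yielding the $\E[|L-\E[L]|]$ term, with the $0.74$ constant from the second-difference bound of the Stein solution) and a "dependence among $Y_i$" part (yielding the $\E[\sum_{i=1}^L p_i\,\E[|U_i-V_i|\mid L]]$ term) — rather than going through an intermediate $\mathrm{Poi}(\Lambda)$ at all. The general lemma the text promises next presumably does exactly this, and Lemma \ref{lem_cor_rand_chenstein} is then read off by specialization.
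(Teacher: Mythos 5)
Your final recommended route---fixing the Stein equation for the Poisson law with deterministic mean $\E[W]$, plugging in $W$, conditioning on $L$, and splitting the resulting expression into a number-of-summands fluctuation term and a dependence term---is exactly the paper's argument: it is carried out for $p_i=\frac{1}{M-1}$ in the vector setting of Lemma \ref{lem_poisson_chenstein_bound}, with the present lemma stated as an immediate corollary proved the same way, and you rightly discarded the triangle-inequality detour through a Poisson with random mean $\sum_{i\le L}p_i$, whose Stein factors would attach to that random mean rather than to $\E[W]$. One minor slip: the $0.74$ constant comes from the sup-norm bound on the Stein solution $g$ itself (and is paired with the $\E[|L-\E[L]|]$ term), while the second-difference bound supplies the $1\wedge\E[W]^{-1}$ factor multiplying the dependence term.
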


For our purposes, we will prove a slightly different result with $p_i=\frac{1}{M-1}$ in a vector setting, but it is easy to see that Lemma \ref{lem_cor_rand_chenstein} can be proven in the same way as what follows.

We will now use notation consistent with \eqref{eq_arr_rep}. Since what follows is done with $t \in [0,T]$ fixed, we will additionally write $N_{n,j}([0,t])$ as $N_{n,j}$ in this section, continuing to omit the $M$ superscript to simplify notation. 
\begin{lemma}
\label{lem_poisson_chenstein_bound}
Let $M>1$. Let $(m,i) \in \{1,\ldots,M\}\times \{1,\ldots,K\}$. For $j \in \{1,\ldots,K\}\setminus\{i\}$, let $A_{j \rightarrow (m,i)}=\sum_{n \neq m}\sum_{k=1}^{N_{n,j}}B_{k, (n,j) \rightarrow (m,i)}$ and let $\tilde{A}_{j \rightarrow i}$ be independent Poisson random variables with means $\E[N_{1,j}]$ as in \eqref{eq_limit_eq_nonexch}. Then,
\begin{equation}
\label{eq_chen_poisson_bound}
\begin{split}
d_{TV}(A_{j \rightarrow (m,i)},\tilde{A}_{j \rightarrow i}) &\leq \left(1\wedge\frac{0.74}{\sqrt{\E[N_{1,j}]}}\right)\frac{1}{M-1}\E\left[\left|\sum_{n \neq m}\left(\E[N_{n,j}]-N_{n,j}\right)\right|\right]\\
&+\frac{1}{M-1}\left(1\wedge \frac{1}{\E[N_{1,j}]}\right)\E[N_{1,j}].
\end{split}
\end{equation}
\end{lemma}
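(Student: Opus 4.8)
The plan is to run the Chen--Stein machinery directly on the random sum, generalizing Lemma~\ref{lem_chenstein_standard} to the case where the number of Bernoulli summands is itself random. First I would fix $t\in[0,T]$ and $j\neq i$ and write $p=\tfrac1{M-1}$, $W=A_{j\rightarrow(m,i)}$, $L=\sum_{n\neq m}N_{n,j}$ and $\lambda=\E[W]$; by replica exchangeability $\E[N_{n,j}]=\E[N_{1,j}]$ for all $n$, so $\E[L]=(M-1)\E[N_{1,j}]$ and $\lambda=p\E[L]=\E[N_{1,j}]$, which matches the mean of $\tilde A_{j\rightarrow i}$. The structural input is Lemma~\ref{lem_cond_indep_bern}: conditionally on the counts $(N_{n,j})_{n\neq m}$ the routing variables $(B_{k,(n,j)\rightarrow(m,i)})$ are i.i.d.\ $\mathrm{Bernoulli}(p)$, and since this conditional law depends on the counts only through their sum, conditionally on $\{L=l\}$ one has $W\sim\mathrm{Bin}(l,p)$, whence $\mathcal L(W)=\E_L[\mathrm{Bin}(L,p)]$.

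Next I would write the Poisson--Stein equation. Let $Z\sim\mathrm{Poisson}(\lambda)$ and $f=f_A$ solve $\lambda f(w+1)-wf(w)=\one_A(w)-\P(Z\in A)$. Evaluating at $W$, taking expectations, conditioning on the counts and using the size-biasing identity for a sum of i.i.d.\ Bernoullis, I expect
\begin{equation*}
\P(W\in A)-\P(Z\in A)=\lambda\,\E[h(L)]-p\,\E[L\,g(L)],
\end{equation*}
with $h(l)=\E[f(1+\mathrm{Bin}(l,p))]$ and $g(l)=\E[f(1+\mathrm{Bin}(l-1,p))]$. Using $\lambda=p\E[L]$ and inserting $\pm\,p\,\E[L\,h(L)]$ this rearranges into
\begin{equation*}
\P(W\in A)-\P(Z\in A)=-\,p\,\mathrm{Cov}\big(L,h(L)\big)+p\,\E\big[L\,(h(L)-g(L))\big],
\end{equation*}
and the two terms on the right are designed to give the two summands of \eqref{eq_chen_poisson_bound}.

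For the first term I would use $|\mathrm{Cov}(L,h(L))|\le\|f\|_\infty\,\E|L-\E[L]|$ together with the standard bound $\|f\|_\infty\le 1\wedge\tfrac{0.74}{\sqrt\lambda}$ on the Stein solution (see \cite{BarHolJan,Bartek_notes}), noting $p\,\E|L-\E[L]|=\tfrac1{M-1}\E\big|\sum_{n\neq m}(\E[N_{n,j}]-N_{n,j})\big|$ and $\lambda=\E[N_{1,j}]$. For the second, I would couple $\mathrm{Bin}(l,p)\deq\mathrm{Bin}(l-1,p)+\beta$ with $\beta\sim\mathrm{Bernoulli}(p)$ independent, forcing $|h(l)-g(l)|\le p\,\|\Delta f\|_\infty$ with $\|\Delta f\|_\infty=\sup_k|f(k+1)-f(k)|\le 1\wedge\tfrac1\lambda$, so that $p\,|\E[L(h(L)-g(L))]|\le p^2\E[L]\,\|\Delta f\|_\infty=p\lambda\,(1\wedge\tfrac1\lambda)$; since $p=\tfrac1{M-1}$ this is exactly the second summand. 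Taking the supremum over all $A\subseteq\N$ then finishes.

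The hard part is the first term: once $L$ is random one cannot simply invoke Lemma~\ref{lem_chenstein_standard}, which treats a deterministic number of Bernoullis and yields only the coarser factor $1\wedge\tfrac1\lambda$; getting the sharper $\tfrac{0.74}{\sqrt\lambda}$ in front of $\E|L-\E[L]|$ is precisely where the $L^\infty$-bound for $f_A$ (rather than the bound on its increments) must be used. The rest is bookkeeping, the main traps being the cancellations $p\E[L]=\lambda$, $p^2(M-1)=p$, and the appeal to exchangeability to identify $\lambda$ with $\E[N_{1,j}]$. As a cross-check, the same estimate also follows from the triangle inequality
\begin{equation*}
d_{TV}(W,Z)\le\E_L\big[d_{TV}(\mathrm{Bin}(L,p),\mathrm{Poisson}(Lp))\big]+d_{TV}\big(\E_L[\mathrm{Poisson}(Lp)],\mathrm{Poisson}(\lambda)\big),
\end{equation*}
bounding the first term by Lemma~\ref{lem_chenstein_standard} applied conditionally on $L$ (giving $\E[\min(Lp^2,p)]\le p(1\wedge\lambda)$) and the second by a one-line Stein estimate for mixed Poisson laws (giving $\|f\|_\infty\,\E|\lambda-pL|$).
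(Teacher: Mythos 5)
Your proposal is correct and follows essentially the same route as the paper: both apply the Poisson--Stein equation directly to the random sum, use Lemma \ref{lem_cond_indep_bern} to get conditional independence of the routing Bernoullis, and split $\lambda\E[f(W+1)]-\E[Wf(W)]$ into a centered-counts term bounded by $\|f\|_\infty$ and a local (size-bias/coupling) term bounded by $p\,\|\Delta f\|_\infty$ per summand, yielding exactly the two terms of \eqref{eq_chen_poisson_bound}. The only cosmetic difference is that you phrase the local term through the conditional binomial law of $W$ given $L$ and the one-point coupling $\mathrm{Bin}(l,p)\deq\mathrm{Bin}(l-1,p)+\beta$, whereas the paper exhibits the couplings $U_{k,(n,j)\rightarrow(m,i)},V_{k,(n,j)\rightarrow(m,i)}$ with $|U_k-V_k|=B_k$ conditionally on the full count vector $N$.
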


A few remarks on this result are in order. First, note that the two terms in the right hand side of \eqref{eq_chen_poisson_bound} are very different in nature. The second term goes to 0 when $M \rightarrow \infty$ due to the moment bound obtained in Lemma \ref{lem_moment_bound}, whereas the first term is the $\mathcal{L}^1$ norm of an empirical mean of centered random variables which are not independent. As such, obtaining the convergence to 0 of that term when $M \rightarrow \infty$  requires proving an $\mathcal{L}^1$ law of large numbers result for non i.i.d. summands, which is not trivial and will be the subject of the next section.
Next, note that the two terms can be heuristically interpreted in the following way: the second term represents a Le Cam-type bound \cite{LeCam86} between a sum of Bernoulli random variables and a sum of Poisson random variables with equal means, in the case where the amount of summands is random. The first term represents the distance between such a random sum of Poisson random variables and a Poisson random variable whose mean is the mean number of summands, similar to \cite{teerapabolarn2014}.

Finally, note that this lemma only provides a bound for fixed $i$ and $j\in \{1,\ldots,K\}\setminus\{i\}: A_{j \rightarrow (m,i)}$ represents the arrivals from nodes $j$ across replicas to node $i$ in replica $m$. Thus Lemma \ref{lem_poisson_chenstein_bound} does not directly give a bound for the approximation of $A_{m,i}$ by $\tilde{A}_{i}=\sum_{j\neq i}\mu_{j\rightarrow i}\tilde{A}_{j \rightarrow i}$. However, since by Lemma \ref{lem_asympt_indep}, we have asymptotic independence, it is natural to expect that the eventual convergence in total variation will also take place for the sum, and we shall see later that it does indeed hold.

We now proceed to the proof of Lemma \ref{lem_poisson_chenstein_bound}.

\begin{proof}
For $B \subset \N$ and $j \in \{1,\ldots,K\}$, let $g_B$ be the solution to the following equation, sometimes referred to as the Stein equation, see \cite{Chen75}:
\begin{equation*}
\E[N_{1,j}]g_B(k+1)-kg_B(k)=\one_B(k)-\P(\tilde{A}_{j \rightarrow i} \in B),
\end{equation*}
for $k \in \N,$ with initial condition $g_B(0)=0$.

As in the proof of Lemma \ref{lem_cond_indep_bern}, let $N =(N_{n,j})_{(n,j)\in (\{1,\ldots,M\}\setminus \{m\})\times(\{1,\ldots,K\}\setminus \{i\})}$. \\
Let $B \subset \N$. 
We have
\begin{equation*}
\begin{split}
&\P(A_{j\rightarrow (m,i)} \in B)-\P(\tilde{A}_{j \rightarrow i}\in B)=\E[\one_{A_{m,i,j\in B}}-\P(\tilde{A}_{j \rightarrow i}\in B)]\\
&=\E\left[\E[N_{1,j}] g_B(A_{j\rightarrow (m,i)}+1)-A_{j\rightarrow (m,i)}g_B(A_{j\rightarrow (m,i)})\right] \textrm{ by the Stein equation} \\
&=\E\left[\E\left[\E[N_{1,j}] g_B(A_{j\rightarrow (m,i)}+1)-\sum_{n \neq m}\sum_{k=1}^{N_{n,j}}B_{k, (n,j) \rightarrow (m,i)}g_B(A_{j\rightarrow (m,i)})\bigg|N\right]\right]\\
&=\E\bigg[\E\bigg[\frac{1}{M-1}\sum_{n\neq m}(\E[N_{n,j}]-N_{n,j}) g_B(A_{j\rightarrow (m,i)}+1)\\
&+\sum_{n \neq m}\sum_{k=1}^{N_{n,j}}\left(\frac{g_B(A_{j\rightarrow (m,i)}+1)}{M-1}-B_{k, (n,j) \rightarrow (m,i)}g_B(A_{j\rightarrow (m,i)})\right)\bigg|N\bigg]\bigg].
\end{split}
\end{equation*}

For all $n \neq m$ and all $1 \leq k \leq N_{n,j}$, let $U_{k,(n,j)\rightarrow (m,i)}$ and $V_{k,(n,j)\rightarrow (m,i)}$ be random variables on the same probability space such that $U_{k,(n,j)\rightarrow (m,i)}\overset{\mathcal{L}}{=}A_{j\rightarrow (m,i)}$ and $\P(V_{k,(n,j)\rightarrow (m,i)}+1 \in \cdot)\overset{\mathcal{L}}{=}\P(A_{j\rightarrow (m,i)} \in \cdot |B_{k, (n,j) \rightarrow (m,i)}=1)$ conditionally on $N$. 

Using Lemma \ref{lem_cond_indep_bern}, we have for all $k$
\begin{equation*}
\E[B_{k, (n,j) \rightarrow (m,i)}|N]=\frac{1}{M-1}.  
\end{equation*}
Therefore, 
\begin{equation*}
\begin{split}
&\P(A_{j\rightarrow (m,i)}\in B)-\P(\tilde{A}_{j \rightarrow i}\in B)=\\
&\E\left[\E\left[\frac{1}{M-1}\sum_{n\neq m}(\E[N_{n,j}]-N_{n,j}) g_B(A_{j\rightarrow (m,i)}+1)\bigg|N\right]\right]\\
&+\frac{1}{M-1}\E\left[\E\left[\sum_{n \neq m}\sum_{k=1}^{N_{n,j}}\left(g_B(U_{k,(n,j)\rightarrow (m,i)}+1)-g_B(V_{k,(n,j)\rightarrow (m,i)}+1)\right)\bigg|N\right]\right].
\end{split}
\end{equation*}

Thus, we have:
\begin{equation*}
\begin{split}
&\left|\P(A_{j\rightarrow (m,i)}\in B)-\P(\tilde{A}_{j \rightarrow i}\in B)\right|\leq \frac{\|g_B\|}{M-1}\E\left[\left|\sum_{n\neq m}\left(\E[N_{n,j}]-N_{n,j}\right)\right|\right]\\
&+\frac{\|\Delta g_B\|}{M-1}\sum_{n \neq m}\E\left[\sum_{k=1}^{N_{n,j}}\E\left[\left|U_{k,(n,j)\rightarrow (m,i)}-V_{k,(n,j)\rightarrow (m,i)}\right| \bigg|N\right]\right],
\end{split}
\end{equation*}
where for a function $f,$ we denote $\|f\|=\sup_{t\in[0,T]}|f(t)|$.
Now, take 
\begin{equation*}
U_{k,(n,j)\rightarrow (m,i)}=A_{j\rightarrow (m,i)} \textrm{ and } V_{k,(n,j)\rightarrow (m,i)}=\sum_{l\neq k}B_{l, (n,j) \rightarrow (m,i)}.   
\end{equation*} Then, 
\begin{equation*}
|U_{k,(n,j)\rightarrow (m,i)}-V_{k,(n,j)\rightarrow (m,i)}|=B_{k, (n,j) \rightarrow (m,i)}.
\end{equation*}
Therefore, using once again Lemma \ref{lem_cond_indep_bern},
\begin{equation*}
\E\left[\left|U_{k,(n,j)\rightarrow (m,i)}-V_{k,(n,j)\rightarrow (m,i)}\right| \big|N\right]=\frac{1}{M-1}.
\end{equation*} 
Moreover, it can be shown (see \cite{BarHolJan}) that $\|g_B\|\leq 1\wedge\frac{0.74}{\E[N_{1,j}]} $ and $\|\Delta g_B\|\leq 1\wedge\frac{1}{\E[N_{1,j}]},$
where for $k \in \N, \Delta g_B(k)=g_B(k+1)-g_B(k).$ Combining this yields \eqref{eq_chen_poisson_bound}.
\end{proof}

\subsection*{Decoupling arrivals and outputs: a fixed point scheme approach}
As we have seen in the previous lemma, for the Poisson approximation to hold, it is sufficient to prove a law of large numbers-type result on the random variables $(N_{n,j})_{n \neq m}$. However, since these random variables themselves depend on the random variables $(A_{j\rightarrow (m,i)})_{m \in \{1,\ldots,M\}}$, a direct proof seems difficult to obtain.

As such, we propose to see Equation \eqref{eq_RMF_LGL_SDE} as the fixed point equation of some function on the space of probability laws on the space of càdlàg trajectories. This fixed point exists and is necessarily unique due to the fact that Equation \eqref{eq_RMF_LGL_SDE} admits a unique solution. The main idea goes as follows: if we endow this space with a metric that makes it complete, in order to prove that the law of large numbers holds at the fixed point, it is sufficient to show that, on one hand, if this law of large numbers holds for a given probability law, it also holds for its image by the function; and that on the other hand, the function's iterates form a Cauchy sequence. This approach is similar to the one developed in \cite{Davydov2022}, where propagation of chaos is proven in discrete time by showing that the one-step transition of the discrete dynamics preserves a triangular law of large numbers.

Our goal in this section is to prove the two aforementioned points. We start by introducing the metric space we will be considering and defining the function on it.

Fix $T \in \R$, and let $D_T$ be the space of càdlàg functions on $[0,T]$ endowed with the Billingsley metric \cite{billingsley1968}: for $x,y \in D_T$, let $$d_{D_T}(x,y)=\inf_{\theta \in \Theta}\max(|||\theta|||,\|x-y\circ \theta\|),$$
where $$\Theta=\{\theta:[0,T]\rightarrow[0,T], \textrm{ s.t. } \theta(0)=0, \theta(T)=T, \textrm{ and } |||\theta|||<\infty \},$$ where $$|||\theta|||=\sup_{s\neq t \in [0,T]}\left|\log\left(\frac{\theta(t)-\theta(s)}{t-s}\right)\right|.$$ Intuitively, $\Theta$ represents all possible "reasonable" time shifts allowing one to minimize the effect of the jumps between the two functions $x$ and $y$, where "reasonable" means that all slopes of $\theta$ are close to 1.

We denote by $d_{D_T,U}$ the uniform metric on $D_T$: for $x,y \in D_T$, \begin{equation*}
d_{D_T,U}(x,y)=\|x-y\|.
\end{equation*} 
Note that we have for all $x,y \in D_T, d_{D_T}(x,y)\leq d_{D_T,U}(x,y)$, since the uniform metric corresponds precisely to the case where $\theta$ is the identity function.

Let $\mathcal{P}(D_T)$ be the space of probability measures on $D_T$. We endow it with the Kantorovitch metric \cite{Kanto42} (sometimes also known as the Wasserstein distance or the earth mover's distance): for $\mu,\nu \in \mathcal{P}(D_T)$, let 
\begin{equation*}
K_T(\mu,\nu)=\inf_{\Pi \in D_T \times D_T}\E[ d_{D_T}(x,y)],
\end{equation*}
where $\Pi$ is a coupling s.t. $x \overset{\mathcal{L}}{=} \mu$ and $y\overset{\mathcal{L}}{=} \nu$. 

Finally, we fix $K,M \in \N$ and consider the space $(\mathcal{P}(D_T))^{MK}$ endowed with the 1-norm metric: for $\mu,\nu \in (\mathcal{P}(D_T))^{MK}$, let $$K_T^{MK}(\mu,\nu)=\sum_{m=1}^M\sum_{i=1}^K K_T(\mu_{m,i},\nu_{m,i}).$$

It is known that $(D_T,d_{D_T})$ is a complete separable metric space, see \cite{billingsley1968}, and thus
that $(\mathcal{P}(D_T),K_T)$ and $(\mathcal{P}(D_T))^{MK},K_T^{MK})$ are as well, see \cite{BogKol2012}.

We will also need to consider $\mathcal{P}(D_T)$ endowed with a Kantorovitch metric based on the uniform metric: we introduce for $\mu,\nu \in \mathcal{P}(D_T)$,  $$K_{T,U}(\mu,\nu)=\inf_{\Pi \in D_T \times D_T} \E[d_{D_T,U}(x,y)],$$
where $\Pi$ is a coupling s.t. $x \overset{\mathcal{L}}{=} \mu$ and $y\overset{\mathcal{L}}{=} \nu$. We also introduce its product version $K_{T,U}^{MK}$ defined analogously to above. Note that even though $(D_T,d_{D_T,U})$ is a complete metric space, it is not separable, therefore $(\mathcal{P}(D_T),K_{T,U})$ is not a priori a complete metric space.

We now define the following mapping:

\begin{align*}
  \Phi \colon (\mathcal{P}(D_T))^{MK} &\to (\mathcal{P}(D_T))^{MK}\\
  (\mathcal{L}(M))&\mapsto \Phi(\mathcal{L}(M)),
\end{align*}
\\
where for all $(m,i)\in \{1,\ldots,M\}\times\{1,\ldots,K\}, \Phi(\mathcal{L}(M))_{m,i}$ is the law of the stochastic intensity $\lambda^{\Phi}_{m,i}$ of a point process $N^{\Phi}_{m,i}$ such that $\lambda^{\Phi}$ is the solution of the stochastic differential equation 
\begin{equation}
\label{eq_phi_def}
\begin{split}
\lambda^{\Phi}_{m,i}(t)&=\lambda^{\Phi}_{m,i}(0)+\sum_{j \neq i}\mu_{j\rightarrow i}\sum_{n \neq m}\int_0^t\one_{\{V^M_{(n,j)\rightarrow i}(s)=m\}}M_{n,j}(\diff s)\\
&+\int_0^t \left(r_i-\lambda^{\Phi}_{m,i}(s)\right)N^{\Phi}_{m,i}(\diff s),
\end{split}
\end{equation}
where $(\lambda^{\Phi}_{m,i}(0))$ are random variables verifying Assumption \ref{Ass_2}. Note that we will exclusively apply the mapping $\Phi$ to laws of stochastic intensities of point processes, the image of which by $\Phi$ are also by definition of $\Phi$ laws of stochastic intensities of point processes.

We formalize the law of large numbers we aim to prove as follows:
\begin{definition}
Let $M \in \N$. Let $(X^M_n)_{1 \leq n\leq M}$ be exchangeable random variables with finite expectation. We say they satisfy an $\mathcal{L}^1$ triangular law of large numbers, which we denote $\TLLN(X^M_n)$, if when $M \rightarrow \infty,$
\begin{equation}
\label{eq_TLLN_cont}
\E\left[\left|\frac{1}{M-1}\sum_{n=1}^M (X^M_n-\E[X^M_n])\right|\right]\rightarrow 0
\end{equation}
and
\begin{equation}
\label{eq_TLLN_cont_2}
X^M_n\Rightarrow \tilde{X},
\end{equation}
where the convergence takes place in distribution.
\end{definition}

From \eqref{eq_chen_poisson_bound}, we know that if the triangular law of large numbers holds for the fixed point of $\Phi$, it allows for convergence in total variation of arrivals across replicas from a given neuron $j$ to a given neuron $i$ to a Poisson random variable. As such, our aim here is twofold:

\begin{enumerate}
\item Show that for all $(m,i)\in \{1,\ldots,M\}\times\{1,\ldots,K\}, \TLLN(\overline{N_{m,i}}([0,t]))$ implies \\$\TLLN(\Phi(\overline{N_{m,i}}([0,t])))$;
\item Show that $(\Phi^l)_{l \in \N^*}$ is a Cauchy sequence that converges to the fixed point, where $\Phi^l$ is the $l$-th iterate of $\Phi: \Phi^l=\Phi \circ \Phi \circ \ldots \circ \Phi$ $l$ times.
\end{enumerate}
Since we can choose $\overline{N_{m,i}}([0,t])$ to be i.i.d. to ensure that there exist inputs for which $\TLLN$ holds, this will allow us to propagate the property and show that $\TLLN$ holds at the fixed point as well.

We will start by proving a lemma that will be key for the second point:

\begin{lemma}
\label{lem_sznitbound}
There exists $T>0$ such that for $\rho, \nu \in (\mathcal{P}(D_T))^{MK}$ that are laws of stochastic intensities of point processes, there exists a constant $C_T>0$ such that
\begin{equation}
\label{eq_sznitbound}
K_{T,U}^{MK}(\Phi(\rho),\Phi(\nu))\leq C_T \int_0^T K_{t,U}^{MK}(\rho, \nu)\diff t.
\end{equation}
\end{lemma}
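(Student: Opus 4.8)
The plan is to couple the two solutions of \eqref{eq_phi_def} driven by $\rho$ and $\nu$ through the \emph{same} Poisson embeddings and the \emph{same} routing marks, and to run a Gronwall argument in the uniform metric. Concretely, fix an optimal (or near-optimal) coupling $\Pi$ of $\rho$ and $\nu$ for $K_{T,U}^{MK}$ realising inputs $(M_{n,j})$ and $(M'_{n,j})$ with $\E\big[\sum_{n,j} d_{D_T,U}(M_{n,j},M'_{n,j})\big]$ close to $K_{T,U}^{MK}(\rho,\nu)$; here one should think of $M_{n,j}$ as the counting function $t\mapsto M_{n,j}([0,t])$ so that $d_{D_T,U}(M_{n,j},M'_{n,j})=\sup_{t\le T}|M_{n,j}([0,t])-M'_{n,j}([0,t])|$. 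Build $\lambda^{\Phi}_{m,i}$ from $(M_{n,j})$ and $\lambda^{\Phi'}_{m,i}$ from $(M'_{n,j})$ using the common Poisson embeddings $\hat N_{m,i}$ of Lemma \ref{lem_poisson_embedd} and the common marks $V^M_{(n,j)\to i}$. Then $K_{T,U}^{MK}(\Phi(\rho),\Phi(\nu))\le \sum_{m,i}\E[\,\sup_{t\le T}|\lambda^{\Phi}_{m,i}(t)-\lambda^{\Phi'}_{m,i}(t)|\,]$, so it suffices to control the right-hand side.

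Next I would write the difference $\delta_{m,i}(t)=\lambda^{\Phi}_{m,i}(t)-\lambda^{\Phi'}_{m,i}(t)$ from \eqref{eq_phi_def} (initial conditions cancel since they can be taken identical). The arrival part contributes $\sum_{j\ne i}\mu_{j\to i}\sum_{n\ne m}\int_0^t \one_{\{V^M_{(n,j)\to i}(s)=m\}}\big(M_{n,j}(\diff s)-M'_{n,j}(\diff s)\big)$, and since both integrands are nonnegative increments, its supremum over $t\le T$ is at most $\sum_{j\ne i}\mu_{j\to i}\sum_{n\ne m} d_{D_T,U}(M_{n,j},M'_{n,j})$ — a term that, after taking expectations and summing over $(m,i)$, is bounded by $(\max_{i,j}\mu_{j\to i})(K-1)\, K_{T,U}^{MK}(\rho,\nu)$, not the integrated version. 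The reset part is $\int_0^t (r_i-\lambda^{\Phi}_{m,i}(s))N^{\Phi}_{m,i}(\diff s)-\int_0^t(r_i-\lambda^{\Phi'}_{m,i}(s))N^{\Phi'}_{m,i}(\diff s)$; here both point processes live on the common embedding $\hat N_{m,i}$ but with \emph{different} intensity curves $\lambda^{\Phi}_{m,i}$, $\lambda^{\Phi'}_{m,i}$, so the jump times differ. Decompose this into (i) the part over the "overlap" region where a point of $\hat N_{m,i}$ lies below both curves — there the contribution is $\int (\lambda^{\Phi'}_{m,i}(s)-\lambda^{\Phi}_{m,i}(s))\,N^{\mathrm{overlap}}(\diff s)$, controlled by $|\delta_{m,i}(s)|$ times the number of such points, and (ii) the "mismatch" region, points lying below one curve but not the other, whose count is governed by $\int_0^t |\lambda^{\Phi}_{m,i}(s)-\lambda^{\Phi'}_{m,i}(s)|\,\diff s$ in expectation (integrating out $\hat N_{m,i}$), with each such point contributing $|r_i-\lambda^{\Phi}_{m,i}(s)|$ or $|r_i-\lambda^{\Phi'}_{m,i}(s)|$. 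Taking expectations, using that $\E[N^{\Phi}_{m,i}([0,T])]$ and the moments $\E[\lambda^{\Phi}_{m,i}(s)^2]$ are bounded uniformly by Lemma \ref{lem_moment_bound} (applying Cauchy–Schwarz to $|r_i-\lambda|\cdot(\text{point count})$ where needed), one arrives at an inequality of the form
\begin{equation*}
\sum_{m,i}\E\Big[\sup_{u\le t}|\delta_{m,i}(u)|\Big]\le C_1\, K_{T,U}^{MK}(\rho,\nu)+C_2\int_0^t \sum_{m,i}\E\Big[\sup_{u\le s}|\delta_{m,i}(u)|\Big]\diff s,
\end{equation*}
for constants $C_1,C_2$ depending only on $T$, $K$, the $\mu_{j\to i}$, the $r_i$ and the moment bounds. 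But this only gives $K_{T,U}^{MK}(\Phi(\rho),\Phi(\nu))\le C_T\, K_{T,U}^{MK}(\rho,\nu)$ by Gronwall, not the desired integrated bound \eqref{eq_sznitbound}.

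To get the integrated form I would instead bound the \emph{arrival} term more carefully in terms of $K_{t,U}$ at running time $t$ rather than at $T$: redo the estimate on $[0,t]$ to obtain $\sum_{m,i}\E[\sup_{u\le t}|\delta_{m,i}(u)|]\le C_1' \,g(t)+C_2\int_0^t \sum_{m,i}\E[\sup_{u\le s}|\delta_{m,i}(u)|]\,\diff s$ where $g(t)$ is the "time-$t$ input discrepancy" $\sum_{m,i}\E[\sup_{u\le t}|M$-counting differences$|]$, which is nondecreasing and dominated by $K_{t,U}^{MK}(\rho,\nu)$ up to the coupling slack; then apply the integral form of Gronwall's lemma to conclude $\sum_{m,i}\E[\sup_{u\le T}|\delta_{m,i}(u)|]\le C_T\int_0^T K_{t,U}^{MK}(\rho,\nu)\,\diff t$ — the point being that the $e^{C_2(T-t)}$ Gronwall kernel is bounded on $[0,T]$, so the running-time discrepancy gets integrated. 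The key structural input that makes the integrated bound possible — and this is the analogue of the classical Sznitman-type argument — is that the reset term only ever sees the intensity \emph{causally}, so its discrepancy at time $t$ depends on $\int_0^t|\delta|$, and the arrival term's discrepancy at time $t$ is genuinely a time-$t$ quantity; there is no instantaneous feedback. The main obstacle, and the step requiring the most care, is the reset-term mismatch estimate (ii): one must argue that, conditionally on the two intensity paths, the expected number of points of $\hat N_{m,i}$ in the symmetric difference region $\{(s,u): u \text{ between } \lambda^{\Phi}_{m,i}(s) \text{ and } \lambda^{\Phi'}_{m,i}(s)\}$ equals $\int_0^t |\lambda^{\Phi}_{m,i}(s)-\lambda^{\Phi'}_{m,i}(s)|\,\diff s$, and that the jump sizes $|r_i-\lambda^{\Phi}_{m,i}(s)|$ remain integrable against this — which is where Assumption \ref{Ass_1} (no decay, hence the intensity only jumps, simplifying the path comparison) and the moment bounds of Lemma \ref{lem_moment_bound} are used, via Cauchy–Schwarz to separate the jump-size factor from the mismatch-count factor. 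One also needs to check that $T$ can be taken small/uniform enough that all the moment bounds and the exponential-moment bound of Lemma \ref{lem_exp_moment_bound} are in force simultaneously.
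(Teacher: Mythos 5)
Your overall architecture (common Poisson embeddings, arrival/reset decomposition, overlap--mismatch analysis of the reset term, moment and exponential-moment control, Gr\"onwall, then infimum over couplings) matches the paper's, but there is a genuine gap at the one point that makes the lemma what it is: the \emph{integrated} right-hand side $\int_0^T K_{t,U}^{MK}(\rho,\nu)\,\diff t$. First, $\rho$ and $\nu$ are laws of the \emph{stochastic intensities} of the input processes, and $K_{T,U}^{MK}$ is the Kantorovich distance built on the uniform distance between intensity paths; you instead couple the counting functions $M_{n,j}$ and measure $\sup_t|M_{n,j}([0,t])-M'_{n,j}([0,t])|$, which is a different metric and, crucially, is a time-$T$ supremum rather than a time integral. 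In the paper the input point processes $N^{\rho}_{n,j}$, $N^{\nu}_{n,j}$ are realized on the \emph{same} embedding $\hat N_{n,j}$ from the coupled intensities, so the arrival discrepancy is the number of embedding points falling between the two intensity curves; taking expectations (intensity $1$ of the embedding, plus the routing factor $\tfrac1{M-1}$) this is exactly $\frac{1}{M-1}\sum_{j\neq i}\mu_{j\to i}\sum_{n\neq m}\int_0^T\E\bigl[\sup_{z\le s}|\rho_{n,j}(z)-\nu_{n,j}(z)|\bigr]\diff s\le \|\mu\|\int_0^T K_{s,U}^{MK}(\rho,\nu)\,\diff s$ --- the integral in time appears automatically. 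Your coupling cannot produce this, which is why your first pass only yields $C_1K_{T,U}^{MK}(\rho,\nu)$.

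Second, your proposed repair does not work: from $f(t)\le C_1 g(t)+C_2\int_0^t f(s)\,\diff s$ with $g$ nondecreasing, Gr\"onwall gives $f(T)\le C_1 g(T)e^{C_2T}$ (or $C_1g(T)+C_1C_2\int_0^Tg(s)e^{C_2(T-s)}\diff s$); the boundary term $C_1 g(T)\approx C_1K_{T,U}^{MK}(\rho,\nu)$ never disappears, so you end up with a mixed bound, not \eqref{eq_sznitbound}. This matters because the factorial decay $C_T^lT^l/l!$ in Lemma \ref{lem_iter_conv_cFIAP} comes precisely from iterating the purely integrated inequality; a non-integrated term with a constant of order $\|\mu\|K$ gives no contraction. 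A smaller point: the reset term's quadratic part $\E[\sup_{z\le s}|\delta_{m,i}(z)|\,(\lambda^{\Phi}_{m,i}(s)\vee\lambda^{\Phi'}_{m,i}(s))]$ is handled in the paper by truncating at a level $C$ (event $A_C$) and controlling the remainder via Chernoff and Lemma \ref{lem_exp_moment_bound}, which keeps the Gr\"onwall estimate in the $\mathcal L^1$ norm of $\sup|\delta|$; a direct Cauchy--Schwarz as you suggest introduces $\sqrt{\E[\sup|\delta|^2]}$ and does not close the loop in that norm.
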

\begin{proof}
Let $T>0.$ Let $t\in[0,T]$. Fix $(m,i) \in \{1,\ldots,M\}\times\{1,\ldots,K\}$. Let $N^{\rho}$ (resp. $N^{\nu},N^{\Phi(\rho)},N^{\Phi(\nu)})$ be a point process admitting $\rho$ (resp. $\nu,\Phi(\rho),\Phi(\nu)$) as a stochastic intensity.
We have
\begin{equation*}
\begin{split}
\Phi(\rho)_{m,i}(t)-\Phi(\nu)_{m,i}(t)&=\sum_{j\neq i}\mu_{j\rightarrow i}\sum_{n \neq m}\left(\int_0^t\one_{\{V^M_{(n,j)\rightarrow i}(s)=m\}}(N^{\rho}_{n,j}(\diff s)-N^{\nu}_{n,j}(\diff s))\right)\\
&+\int_0^t (r_i-\Phi(\rho)_{m,i}(s))N^{\Phi(\rho)}_{m,i}(\diff s)\\
&-\int_0^t (r_i-\Phi(\nu)_{m,i}(s))N^{\Phi(\nu)}_{m,i}(\diff s).
\end{split}
\end{equation*}
Let $(\hat{N}_{m,i})_{(m,i) \in \{1,\ldots,M\}\times\{1,\ldots,K\}}$ be independent Poisson point processes with intensity 1 on $[0,T]\times\R^+$ Using the Poisson embedding construction, we can write
\begin{equation*}
\begin{split}
&\Phi(\rho)_{m,i}(t)-\Phi(\nu)_{m,i}(t)=\\
&\sum_{j\neq i}\mu_{j\rightarrow i}\sum_{n \neq m}\int_0^t\int_0^{+\infty}\one_{\{V^M_{(n,j)\rightarrow i}(s)=m\}}(\one_{\{u\leq \rho_{n,j}(s)\}}-\one_{\{u\leq \nu_{n,j}(s)\}})\hat{N}_{n,j}(\diff s\diff u)\\
&+r_i\int_0^t\int_0^{+\infty}(\one_{\{ u\leq \Phi(\rho)_{m,i}(s)\}}-\one_{\{ u\leq \Phi(\nu)_{m,i}(s)\}})\hat{N}_{m,i}(\diff s\diff u)\\
&+\int_0^{t}\int_0^{+\infty} (\Phi(\nu)_{m,i}(s)\one_{\{u\leq (\Phi(\nu)_{m,i}(s))\}}-\Phi(\rho)_{m,i}(s))\one_{\{u\leq (\Phi(\rho)_{m,i}(s))\}}\hat{N}_{m,i}(\diff s\diff u).
\end{split}
\end{equation*}

Therefore, we have
\begin{equation*}
\begin{split}
&\left|\Phi(\rho)_{m,i}(t)-\Phi(\nu)_{m,i}(t)\right|\leq\\
&\sum_{j\neq i}\mu_{j\rightarrow i}\sum_{n \neq m}\int_0^t\int_0^{+\infty}\one_{\{V^M_{(n,j)\rightarrow i}(s)=m\}}\one_{\{u\leq \sup_{z \in [0,s]}|\rho_{n,j}(z)-\nu_{n,j}(z)|\}}\hat{N}_{n,j}(\diff s\diff u)\\
&+r_i\int_0^t\int_0^{+\infty} \one_{\{u\leq \sup_{z \in [0,s]}|\Phi(\rho)_{m,i}(z)-\Phi(\nu)_{m,i}(z)|\}}\hat{N}_{m,i}(\diff s\diff u)\\
&+\int_0^t\int_0^{+\infty}\sup_{z \in [0,s]}|\Phi(\rho)_{m,i}(z)-\Phi(\nu)_{m,i}(z)|\one_{\{u\leq \Phi(\rho)_{m,i}(s)\wedge\Phi(\nu)_{m,i}(s)\}}\hat{N}_{m,i}(\diff s\diff u)\\
&+\int_0^t\int_0^{+\infty}|\Phi(\rho)_{m,i}(s)\vee\Phi(\nu)_{m,i}(s)|\one_{\{u\leq \sup_{z \in [0,s]}|\Phi(\rho)_{m,i}(z)-\Phi(\nu)_{m,i}(z)|\}}\hat{N}_{m,i}(\diff s\diff u).
\end{split}
\end{equation*}
Taking the expectation, using the property of stochastic intensity and proceeding as before to obtain the $\frac{1}{M-1}$ from the routing indicators, we get
\begin{equation*}
\begin{split}
&\E\left[\sup_{t \in [0,T]}\left|\Phi(\rho)_{m,i}(t)-\Phi(\nu)_{m,i}(t)\right|\right]\leq \\
&\frac{1}{M-1}\sum_{j\neq i}\mu_{j\rightarrow i}\sum_{n \neq m}\int_0^T\E\left[\sup_{z\in[0,s]}\left|\rho_{n,j}(z)-\nu_{n,j}(z)\right|\right]\diff s\\
&+r_i\int_0^T\E\left[\sup_{z \in [0,s]}\left|\Phi(\rho)_{m,i}(z)-\Phi(\nu)_{m,i}(z)\right|\right]\diff s\\
&+\int_0^T\E\left[\sup_{z \in [0,s]}\left|\Phi(\rho)_{m,i}(z)-\Phi(\nu)_{m,i}(z)\right|\left(\Phi(\rho)_{m,i}(s)\wedge \Phi(\nu)_{m,i}(s)\right)\right]\diff s\\
&+\int_0^T\E\left[\sup_{z \in [0,s]}\left|\Phi(\rho)_{m,i}(z)-\Phi(\nu)_{m,i}(z)\right|\left(\Phi(\rho)_{m,i}(s)\vee \Phi(\nu)_{m,i}(s)\right)\right]\diff s.
\end{split}
\end{equation*}

Denote $||\mu||=\max_{i,j}\mu_{j\rightarrow i}$. We then have
\begin{equation*}
\begin{split}
&\frac{1}{M-1}\sum_{j\neq i}\mu_{j\rightarrow i}\sum_{n \neq m}\int_0^T\E\left[\sup_{z\in[0,s]}\left|\rho_{n,j}(z)-\nu_{n,j}(z)\right|\right]\diff s\leq \\
&||\mu||\sum_{j=1}^K\sum_{n=1}^M\int_0^T d_{D_s,U}(\rho_{n,j},\nu_{n,j})\diff s,
\end{split}
\end{equation*}
from which we immediately get by definition of $K_{T,U}^{MK}$
\begin{equation}
\label{eq_contracterm}
\frac{1}{M-1}\sum_{j\neq i}\mu_{j\rightarrow i}\sum_{n \neq m}\int_0^T\E[\sup_{z\in[0,s]}|\rho_{n,j}(z)-\nu_{n,j}(z)|]\diff s\leq ||\mu||\int_0^T K_{s,U}^{MK}(\rho,\nu)\diff s.
\end{equation}

Let $C>0$. As before, let $A_C([0,T])=\{(\omega,t) \in \Omega\times [0,T], \Phi(\rho)_{m,i}(t)\vee \Phi(\nu)_{m,i}(t)>C\}$.
Using the exact same reasoning as in Lemma \ref{lem_moment_equality}, we have for small enough $T$ the existence of a constant $K_T>0$ such that
\begin{equation*}
\begin{split}
&\int_0^t\E[\sup_{z \in [0,s]}|\Phi(\rho)_{m,i}(z)-\Phi(\nu)_{m,i}(z)|(\Phi(\rho)_{m,i}(s)\wedge \Phi(\nu)_{m,i}(s))]\diff s\\
&\leq C\int_0^t \E\left[\sup_{z\in[0,s]}\left|\Phi(\rho)_{m,i}(z)-\Phi(\nu)_{m,i}(z)\right|\right]\diff s+K_Te^{-3CT}.
\end{split}
\end{equation*}
Plugging in \eqref{eq_contracterm} and applying the same reasoning as above to the last integral term, we get the existence of a constant $K'_T>0$ such that
\begin{equation*}
\begin{split}
&\E\left[\sup_{t \in [0,T]}\left|\Phi(\rho)_{m,i}(t)-\Phi(\nu)_{m,i}(t)\right|\right]\leq ||\mu||\int_0^T K_{s,U}^{MK}(\rho,\nu)\diff s\\
&+\left(r_i+2C\right)\int_0^T\E\left[\sup_{z \in [0,s]}\left|\Phi(\rho)_{m,i}(z)-\Phi(\nu)_{m,i}(z)\right|\right]\diff s\\
&+\left(K_T+K'_T\right)e^{-3CT}.
\end{split}
\end{equation*}
Applying Grönwall's lemma, we get
\begin{equation*}
\begin{split}
&\E\left[\sup_{t \in [0,T]}\left|\Phi(\rho)_{m,i}(t)-\Phi(\nu)_{m,i}(t)\right|\right]\leq \\
&\left(||\mu||\int_0^T K_{s,U}^{MK}(\rho,\nu)\diff s + \left(K_T+K'_T\right)e^{-3CT}\right)e^{(r_i+2C)T}.
\end{split}
\end{equation*}
For any $\eps>0$, we can choose $C>0$ such that
\begin{equation*}
\E\left[\sup_{t \in [0,T]}\left|\Phi(\rho)_{m,i}(t)-\Phi(\nu)_{m,i}(t)\right|\right]\leq \left(||\mu||\int_0^T K_{s,U}^{MK}(\rho,\nu)\diff s\right)e^{(r_i+2C)T}+\eps.
\end{equation*}
Letting $\eps$ go to 0 and taking the sum over all coordinates and the infimum across all couplings, we get the result.
\end{proof}

As previously mentioned, we need to prove convergence of the sequence of iterates of $\Phi$ to the fixed point of $\Phi$ to prove the triangular law of large numbers. We will now derive this from Lemma \ref{lem_sznitbound}.

\begin{lemma}
\label{lem_iter_conv_cFIAP}
Let $\rho \in (\mathcal{P}(D_T))^{MK}$ be the law of the stochastic intensity of a point process.
The sequence $(\Phi^l(\rho))_{l \in \N^*}$ of iterates of the function $\Phi$ is a Cauchy sequence. Moreover, it converges to the unique fixed point of $\Phi$.
\end{lemma}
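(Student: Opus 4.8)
The statement follows by iterating the integral Lipschitz estimate of Lemma \ref{lem_sznitbound}, exploiting the fact recorded just above it that $\Phi$ admits a unique fixed point $\mu^*$, namely the (marginal) law of the unique solution of \eqref{eq_RMF_LGL_SDE}. Rather than first extracting an abstract limit and then identifying it, I would directly control the distance from the $l$-th iterate to $\mu^*$.

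Concretely, set $e_l(t):=K_{t,U}^{MK}(\Phi^l(\rho),\mu^*)$ for $t\in[0,T]$. Since $\Phi(\mu^*)=\mu^*$ and $\Phi^{l+1}(\rho)=\Phi(\Phi^l(\rho))$, Lemma \ref{lem_sznitbound}, applied on each subinterval $[0,t]$ with $t\le T$ (the estimate being valid on every such subinterval), gives $e_{l+1}(t)\le C_T\int_0^t e_l(s)\diff s$. As $e_l$ is non-decreasing, iterating this bound yields $e_l(T)\le \frac{(C_T T)^l}{l!}\,e_0(T)$. The base term $e_0(T)=K_{T,U}^{MK}(\rho,\mu^*)$ is finite: $\mu^*$ has finite moments of every order by Lemmas \ref{lem_moment_bound} and \ref{lem_exp_moment_bound}, and $\rho$ is taken with finite first moments (the only case of interest, and in particular the i.i.d.\ choice referred to above), so coupling the two laws independently bounds $e_0(T)$ by a finite quantity. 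Hence $e_l(T)\to 0$ as $l\to\infty$.

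It follows that $\Phi^l(\rho)\to\mu^*$ in $K_{T,U}^{MK}$, and therefore also in $K_T^{MK}$, since $d_{D_T}\le d_{D_T,U}$ entails $K_T^{MK}\le K_{T,U}^{MK}$. A convergent sequence is Cauchy, so $(\Phi^l(\rho))_{l\in\N^*}$ is a Cauchy sequence in the complete metric space $((\mathcal{P}(D_T))^{MK},K_T^{MK})$, and it converges to the unique fixed point $\mu^*$ of $\Phi$, which is exactly the assertion of the lemma.

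The one genuinely delicate point is that Lemma \ref{lem_sznitbound} is a contraction-type estimate in the uniform Kantorovitch metric $K_{T,U}^{MK}$, in which $(\mathcal{P}(D_T))^{MK}$ is \emph{not} complete, whereas completeness is available only for the weaker Billingsley metric $K_T^{MK}$; and one cannot in general deduce convergence in $K_{T,U}^{MK}$ from the Cauchy property in $K_{T,U}^{MK}$, since the uniform cost is not lower semicontinuous for Skorokhod convergence. The argument above circumvents this by comparing each iterate directly to the independently-known fixed point $\mu^*$. A self-contained Banach-type alternative would be to realize all the iterates on a single probability space through the common Poisson-embedding coupling of the proof of Lemma \ref{lem_sznitbound}: there the telescoping bound $\sum_l K_{T,U}^{MK}(\Phi^l(\rho),\Phi^{l+1}(\rho))<\infty$ becomes almost sure (and $\mathcal{L}^1$) uniform convergence of the driving intensities to a càdlàg limit, which one then checks solves \eqref{eq_RMF_LGL_SDE} and hence carries the fixed-point law. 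A secondary point worth flagging is the finiteness of $e_0(T)$, which is precisely where Assumption \ref{Ass_2}, through Lemma \ref{lem_exp_moment_bound}, enters.
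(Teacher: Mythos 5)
Your proof is correct, and it takes a genuinely different (though closely related) route from the paper's. The paper bounds the distance between \emph{consecutive} iterates, $K_{T,U}^{MK}(\Phi^{l+1}(\rho),\Phi^l(\rho))\leq \frac{(C_TT)^l}{l!}K_{T,U}^{MK}(\Phi(\rho),\rho)$, telescopes this to get the Cauchy property in the uniform Kantorovitch metric, then passes to the weaker metric $K_T^{MK}$ (using $d_{D_T}\leq d_{D_T,U}$) and invokes completeness of $((\mathcal{P}(D_T))^{MK},K_T^{MK})$ to conclude convergence to the unique fixed point. You instead compare each iterate directly to the fixed point $\mu^*$, i.e.\ the law of the solution of \eqref{eq_RMF_LGL_SDE}, using $\Phi(\mu^*)=\mu^*$ and the integral estimate of Lemma \ref{lem_sznitbound} on subintervals to obtain $K_{T,U}^{MK}(\Phi^l(\rho),\mu^*)\leq \frac{(C_TT)^l}{l!}K_{T,U}^{MK}(\rho,\mu^*)\to 0$, after which the Cauchy property is automatic. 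What your version buys is a cleaner identification of the limit: in the paper's argument, completeness under $K_T^{MK}$ only produces convergence to \emph{some} limit, and identifying that limit with the fixed point implicitly requires continuity of $\Phi$ with respect to $K_T^{MK}$ (or an equivalent supplementary argument), a step the paper does not spell out; your direct comparison makes this unnecessary, and your observation that being Cauchy for $K_{T,U}^{MK}$ does not by itself give convergence in that (non-complete) metric is accurate. The price you pay is needing $K_{T,U}^{MK}(\rho,\mu^*)<\infty$, i.e.\ a finite first moment of the running supremum of the intensities; but the paper's bound is equally vacuous unless $K_{T,U}^{MK}(\Phi(\rho),\rho)<\infty$, so this is not a restriction in substance, and your justification via the moment bounds of Lemmas \ref{lem_moment_bound} and \ref{lem_exp_moment_bound} covers the inputs actually used afterwards (i.i.d.\ inputs with initial conditions satisfying Assumption \ref{Ass_2}).
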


\begin{proof}
Let $\rho \in (\mathcal{P}(D_T))^{MK}$ be the law of the stochastic intensity of a point process.
By immediate induction, from \eqref{eq_sznitbound}, we have, for all $l \in \N^*,$
\begin{equation*}
K_{T,U}^{MK}(\Phi^{l+1}(\rho),\Phi^l(\rho))\leq C_T^l \frac{T^l}{l!} K_{T,U}^{MK}(\Phi(\rho), \rho).
\end{equation*}
This in turn implies that for any $p<q \in \N^*,$
\begin{equation}
\label{eq_cauchy_bound}
K_{T,U}^{MK}(\Phi^{p}(\rho),\Phi^q(\rho))\leq \sum_{l=p}^{q-1}\frac{C_T T^l}{l!} K_{T,U}^{MK}(\Phi(\rho), \rho).
\end{equation}
Since the series on the right hand side is converging, it proves that the sequence $(\Phi^l)_{l \in \N^*}$ is a Cauchy sequence for the $K_{T,U}^{MK}$ metric.
The space $(\mathcal{P}(D_T))^{MK},K_{T,U}^{MK})$ is not complete. However, since for any $\mu, \nu \in D_T, d_{D_T}(\mu,\nu)\leq d_{D_T,U}(\mu,\nu),$ it follows that $(\Phi^l)_{l \in \N^*}$ is a Cauchy sequence for the $K_{T}^{MK}$ metric  as well.
By completeness of $(\mathcal{P}(D_T))^{MK},K_T^{MK})$, $(\Phi^l)_{l \in \N^*}$ converges to the unique fixed point of $\Phi$.
\end{proof}

All that remains is proving that the triangular law of large numbers is carried over by the function $\Phi$, namely, that if we have some input $X$ that verifies $\TLLN(X)$, then we have $\TLLN(\Phi(X)).$

To do so, the key lemma will be the following law of large numbers:

\begin{lemma}
\label{lem_tlln}
Let $M \in \N^*$. Let $(X_1^M, \ldots, X^M_M)$ be $M$-exchangeable centered random variables with finite exponential moments. Suppose that for any $N \in \N^*$, $(X^M_1,\ldots, X^M_N) \overset{\mathcal{L}}{\rightarrow} (\tilde{X}_1,\ldots, \tilde{X}_N)$ when $M \rightarrow \infty$, where $(\tilde{X}_i)_{i \in \N^*}$ are i.i.d. random variables and the convergence takes place in distribution. Then
\begin{equation}
\label{eq_tlln_lem}
\E\left[\left|\frac{1}{M}\sum_{n=1}^M X^M_n\right|\right] \rightarrow 0
\end{equation}
when $M \rightarrow \infty$.
\end{lemma}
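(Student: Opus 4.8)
The plan is to argue in $\mathcal{L}^2$ and reduce the whole statement to the decay of the pairwise correlation $\E[X_1^M X_2^M]$. I read the hypothesis ``finite exponential moments'' as a bound uniform in $M$ (which is exactly what Lemma~\ref{lem_exp_moment_bound} supplies in the situations where this lemma gets applied, namely $X_n^M=\overline{N_{n,i}}([0,t])-\E[\overline{N_{n,i}}([0,t])]$ or the analogous centered intensity), so that all polynomial moments of the $X_n^M$ are bounded uniformly in $n$ and $M$; in particular $\sup_M\E[(X_1^M)^4]<\infty$. Expanding the square and using exchangeability to collapse the $M$ diagonal terms and the $M(M-1)$ off-diagonal terms,
\begin{equation*}
\E\left[\left(\frac{1}{M}\sum_{n=1}^M X_n^M\right)^2\right]=\frac{1}{M}\,\E[(X_1^M)^2]+\frac{M-1}{M}\,\E[X_1^M X_2^M].
\end{equation*}
The first term is $O(1/M)$, so by $\E[|Y|]\le\sqrt{\E[Y^2]}$ it suffices to show $\E[X_1^M X_2^M]\to 0$; the identity being exact, no rate is needed, and replacing $\frac1M$ by $\frac1{M-1}$ as in \eqref{eq_TLLN_cont} changes nothing since the ratio tends to $1$.

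For the covariance I would combine the assumed convergence in distribution with uniform integrability. By the continuous mapping theorem $X_1^M X_2^M\Rightarrow \tilde X_1\tilde X_2$, and the uniform $\mathcal{L}^4$ bound gives $\sup_M\E[(X_1^M X_2^M)^2]\le\sup_M\E[(X_1^M)^4]^{1/2}\E[(X_2^M)^4]^{1/2}<\infty$, so $\{X_1^M X_2^M\}_M$ is uniformly integrable; hence $\E[X_1^M X_2^M]\to\E[\tilde X_1\tilde X_2]=\E[\tilde X_1]\,\E[\tilde X_2]$, using the assumed independence of the $\tilde X_i$. The same uniform-integrability argument applied to $\{X_1^M\}_M$, which converges in distribution to $\tilde X_1$, yields $\E[\tilde X_1]=\lim_M\E[X_1^M]=0$ because the $X_n^M$ are centered; likewise $\E[\tilde X_2]=0$. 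Therefore $\E[X_1^M X_2^M]\to 0$, the right-hand side above tends to $0$, and \eqref{eq_tlln_lem} follows.

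The step I expect to need the most care is the passage from finite exchangeability of the triangular array to the i.i.d.\ structure at the limit: it is precisely the hypothesis that the finite-dimensional marginals converge to an \emph{independent} limit that forces asymptotic decorrelation, and one must check that the moment hypotheses are strong enough to carry expectations through the weak limit (convergence in distribution alone does not give $\E[X_1^M X_2^M]\to\E[\tilde X_1\tilde X_2]$). I would therefore state explicitly the uniform exponential-moment bound being used and cite Lemma~\ref{lem_exp_moment_bound} for its availability in the applications, and spell out the uniform-integrability-plus-weak-convergence implication for both $X_1^M$ and $X_1^M X_2^M$. Everything else is a routine second-moment computation.
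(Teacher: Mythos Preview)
Your proposal is correct and follows essentially the same route as the paper: expand $\E[U_M^2]$ via exchangeability into a diagonal term of order $1/M$ plus the cross term $\E[X_1^M X_2^M]$, then kill the cross term by combining weak convergence of $(X_1^M,X_2^M)$ to the independent pair $(\tilde X_1,\tilde X_2)$ with uniform integrability from the moment bounds. Your write-up is in fact slightly cleaner than the paper's, which detours through Chebychev and convergence in probability before invoking bounded second moments to upgrade to $\mathcal{L}^1$, whereas you go directly from $\E[U_M^2]\to 0$ to $\E[|U_M|]\to 0$; you also make explicit the step $\E[\tilde X_1]=0$, which the paper uses without comment.
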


\begin{proof}
Let $$U_M=\frac{1}{M}\sum_{n=1}^M X^M_n.$$ Note that $\E[U_M]=0.$
We have
\begin{equation*}
\E[U_M^2]=\frac{1}{M^2}\E\left[\sum_{n=1}^M(X^M_n)^2+\sum_{m \neq n}X^M_m X^M_n\right].
\end{equation*}
Since the exponential moments of $(X^M_n)$ are bounded and they all converge in distribution, we have by asymptotic independence that for any $m,n$,
\begin{equation*}
\E[X^M_m X^M_n]\rightarrow \E[\tilde{X}_m \tilde{X}_n]=\E[\tilde{X}_m]\E[\tilde{X}_n]=0
\end{equation*} 
when $M \rightarrow \infty$.

Therefore, for $\eps >0$, for large enough $M$, each of the terms of the corresponding sums in the equation above is smaller than $\eps$. Combining this with exchangeability, we get the existence of a positive constant $C$ s.t. for large enough $M$,
\begin{equation*}
\E[U_M^2] \leq \frac{1}{M^2}(CM+M(M-1)\eps).
\end{equation*}
Now, applying Chebychev's inequality, for any $\delta >0,$
\begin{equation*}
\P(|U_M-\E[U_M]|>\delta) \leq \frac{\E[U_M^2]}{\delta^2}\leq \frac{C}{M\delta^2}+\frac{M(M-1)\eps}{M^2\delta^2}.
\end{equation*}
This gives convergence in probability of $U_M$ to 0 when $M\rightarrow\infty$.

Since in addition the second moments are uniformly bounded, $\mathcal{L}^1$ convergence follows.
\end{proof}

Now, we can finally prove the following lemma which is the last step needed to prove the main theorem:

\begin{lemma}
\label{lem_tlln_Phi}
Let $(\overline{N_{m,i}})$ be point processes on $[0,T]$ with finite exponential moments. Let $t \in [0,T].$ Suppose $\TLLN((\overline{N_{m,i}}([0,t]))$ holds. Then, \\$\TLLN(\Phi((\overline{N_{m,i}}([0,t])))$ holds as well.
\end{lemma}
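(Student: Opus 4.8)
The plan is to reduce the statement to an application of Lemma~\ref{lem_tlln}, whose only non‑trivial hypothesis — convergence of the finite‑dimensional marginals of the output to i.i.d.\ limits — I would obtain by combining the Chen--Stein bound of Lemma~\ref{lem_poisson_chenstein_bound} with the asymptotic independence built into the routing mechanism. Throughout, write $\overline{A}^{\Phi}_{m,i}(s)=\sum_{j\neq i}\mu_{j\rightarrow i}\sum_{n\neq m}\int_0^s\one_{\{V^M_{(n,j)\rightarrow i}(u)=m\}}\overline{N}_{n,j}(\diff u)$ for the arrival process feeding \eqref{eq_phi_def}, and $\overline{A}^{\Phi}_{j\rightarrow(m,i)}(s)=\sum_{n\neq m}\sum_{k\leq \overline{N}_{n,j}([0,s])}B^M_{k,(n,j)\rightarrow(m,i)}$ for its $j$‑components, so that $\overline{A}^{\Phi}_{m,i}=\sum_{j\neq i}\mu_{j\rightarrow i}\overline{A}^{\Phi}_{j\rightarrow(m,i)}$.

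First I would dispatch the easy ingredients. For each fixed $i$, exchangeability of $(N^{\Phi}_{m,i}([0,t]))_{1\leq m\leq M}$ in $m$ follows from exchangeability of the inputs $(\overline{N}_{n,j})_n$, the permutation symmetry of the routing variables, and the fact that $\lambda^{\Phi}_{m,i}(0)=Z_i$ does not depend on $m$; and finite exponential moments of $N^{\Phi}_{m,i}([0,t])$ follow from those of the inputs and of $Z_i$ by the Grönwall argument of Lemma~\ref{lem_exp_moment_bound}. Hence, setting $X^M_m=N^{\Phi}_{m,i}([0,t])-\E[N^{\Phi}_{m,i}([0,t])]$, it suffices to prove that for every $N\in\N^*$ the vector $(N^{\Phi}_{1,i}([0,t]),\dots,N^{\Phi}_{N,i}([0,t]))$ converges in law to $N$ i.i.d.\ copies of a single limit law: then Lemma~\ref{lem_tlln} gives \eqref{eq_TLLN_cont} (the $\tfrac1M$ versus $\tfrac1{M-1}$ discrepancy being harmless), the case $N=1$ gives \eqref{eq_TLLN_cont_2}, and $\TLLN(\Phi((\overline{N}_{m,i}([0,t])))$ is proved.

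For the finite‑dimensional convergence I would proceed as follows. By the Poisson‑embedding representation, $N^{\Phi}_{m,i}$ is a measurable functional $\Psi(\overline{A}^{\Phi}_{m,i}(\cdot),\hat N_{m,i})$, continuous at the relevant limit points, where the auxiliary rate‑$1$ processes $(\hat N_{m_l,i})_{l\leq N}$ are i.i.d.\ and independent of the inputs and of the routing; so it is enough to show that $(\overline{A}^{\Phi}_{m_l,i}(\cdot))_{l\leq N}$ converges on $D([0,t])^N$ to $N$ independent copies of $\tilde A_i(\cdot)=\sum_{j\neq i}\mu_{j\rightarrow i}\tilde A_{j\rightarrow i}(\cdot)$, with the $\tilde A_{j\rightarrow i}$ independent Poisson processes of intensities $a_j(s)=\lim_M\E[\overline{N}_{1,j}([0,s])]$, and then invoke the continuous mapping theorem ($t$ being a.s.\ not a jump time of the limit). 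Conditionally on all the aggregate counts $(\overline{N}_{n,j}([0,s]))$, the routing marks $(B^M_{k,(n,j)\rightarrow(m_l,i)})$ become asymptotically independent $\mathrm{Bernoulli}(\tfrac1{M-1})$ across the labels $l$ and $j$: for a spike of $\overline{N}_{n,j}$ with $n\notin\{m_1,\dots,m_N\}$ the indicators of routing towards $m_1,\dots,m_N$ come from a single uniform draw on $\{1,\dots,M\}\setminus\{n\}$, whose coordinates decorrelate at rate $1/M$, and the contribution of the finitely many source replicas $n\in\{m_1,\dots,m_N\}$ is negligible since each of their (exponentially integrable, hence a.s.\ finitely many) spikes routes to a given $m_l$ with probability $\leq\tfrac1{M-1}$. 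Applying Lemma~\ref{lem_poisson_chenstein_bound} conditionally on the counts — its proof uses only the conditional independence of the routing marks (immediate here), the finite moments of $\overline{N}_{n,j}$, and the quantity $\tfrac1{M-1}\E[|\sum_{n\neq m}(\E[\overline{N}_{n,j}([0,s])]-\overline{N}_{n,j}([0,s]))|]$, which tends to $0$ for each $s\leq t$ by \eqref{eq_TLLN_cont} applied to $\overline{N}_{m,i}([0,s])$ — I would conclude that each $\overline{A}^{\Phi}_{j\rightarrow(m_l,i)}([0,s])$ converges in total variation to $\mathrm{Poi}(a_j(s))$ and that the joint law of $(\overline{A}^{\Phi}_{j\rightarrow(m_l,i)}([0,s]))_{l\leq N,\,j\neq i}$ converges to the corresponding product of Poisson laws. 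Since this holds for every $s\leq t$, weak convergence of the void probabilities upgrades it to convergence of the counting processes, which (the limit being a superposition of independent Poisson processes with no common points) takes place in $D([0,t])$, as needed.

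The hard part is precisely this last step: squeezing out of the weak control \eqref{eq_TLLN_cont}--\eqref{eq_TLLN_cont_2} on the inputs both the Poisson limit of each marginal arrival stream — a Le Cam/Chen--Stein estimate for a sum with a random number of Bernoulli terms, which is exactly what Lemma~\ref{lem_poisson_chenstein_bound} packages — and the asymptotic independence across recipients $m_1,\dots,m_N$ and source neurons $j$, which hinges on the fact that conditioning on the aggregate counts restores conditional independence of the routing marks while the $\tfrac1{M-1}$‑averaged counts converge to deterministic limits by the very property being propagated. A secondary technical point is the passage from convergence of counts at fixed times to convergence of the arrival trajectories in Skorokhod space, needed so that the (continuous) solution map of \eqref{eq_phi_def} can be composed with the limit; this is why \eqref{eq_TLLN_cont} is invoked for every $s\leq t$ rather than only at $s=t$.
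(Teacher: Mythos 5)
Your proposal is correct and follows essentially the same route as the paper: apply the Chen--Stein bound of Lemma~\ref{lem_poisson_chenstein_bound} conditionally on the counts, using the assumed $\TLLN$ on the inputs to kill the empirical-mean term, use the conditional independence of the routing variables (Lemma~\ref{lem_cond_indep_bern}) to obtain joint convergence to independent Poisson limits, push this through the solution map of \eqref{eq_phi_def} via the mapping theorem, and conclude with Lemma~\ref{lem_tlln}. You are in fact somewhat more explicit than the paper on the joint convergence across recipient replicas $m_1,\dots,m_N$ and on the process-level (Skorokhod) convergence needed before composing with the solution map, points the paper treats tersely.
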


\begin{proof}
Suppose $\TLLN((\overline{N_{m,i}}([0,t]))$ holds. For $(m,i)\in \{1,\ldots,M\}\times\{1,\ldots,K\}$, let $\overline{\lambda_{m,i}}$ be the stochastic intensity of the process $\overline{N_{m,i}}$.
We write for all $t \in [0,T]$,
\begin{equation*}
\lambda_{m,i}(t)=\lambda_{m,i}(0)+\sum_{j \neq i}\mu_{j\rightarrow i}\overline{A_{j\rightarrow (m,i)}}(t)+\int_0^t (r_i-\lambda_{m,i}(s))N_{m,i}(\diff s),
\end{equation*}
where $(\lambda_{m,i}(0))$ verifies Assumption \ref{Ass_2} and 
\begin{equation*}
\overline{A_{j\rightarrow (m,i)}}(t)=\sum_{n \neq m}\int_0^t\one_{\{V^M_{(n,j)\rightarrow i}(s)=m\}}\overline{N_{n,j}}(\diff s).
\end{equation*}

Analogously to \eqref{eq_chen_poisson_bound}, we have
\begin{equation*}
\begin{split}
&d_{TV}(\overline{A_{j\rightarrow (m,i)}}(t),\tilde{A}_{j \rightarrow i}(t)) \leq\\ &\left(1\wedge\frac{0.74}{\sqrt{\E[\overline{N_{1,j}}\left([0,T]\right)]}}\right)\frac{1}{M-1}\E[|\sum_{n \neq m}\left(\E[\overline{N_{n,j}}\left([0,T]\right)]-\overline{N_{n,j}}\left([0,T]\right)\right)|]\\
&+\frac{1}{M-1}\left(1\wedge \frac{1}{\E[\overline{N_{1,j}}\left([0,T]\right)]}\right)\E[\overline{N_{1,j}}\left([0,T]\right)].
\end{split}
\end{equation*}
where $\tilde{A}_{j \rightarrow i}$ are independent Poisson random variables with mean $\E[N_{1,j}([0,T])]$.

As such, $\TLLN((\overline{N_{m,i}}([0,t]))$ implies convergence in total variation of $(\overline{A_{j\rightarrow (m,i)}})$ to independent random variables $(\tilde{A}_{j \rightarrow i})$. We will now show that this implies convergence in total variation of $\sum_{j \neq i}\mu_{j\rightarrow i}\overline{A_{j\rightarrow (m,i)}}.$ 

Denote as before $\overline{N}=(\overline{N}_{n,j})_{n\neq m, j \neq i}.$ Let $q\in \N^{(M-1)(K-1)}.$ Let $B_1,B_2\in \mathcal{B}(\R^+).$ Let $l_1\neq l_2 \in \{1,\ldots,K\}\setminus \{i\}.$
Then, using the total probability formula, we have
\begin{equation*}
\begin{split}
&\P(\overline{A}_{l_1\rightarrow(m,i)}\in B_1,\overline{A}_{l_2\rightarrow(m,i)}\in B_2)=\\
&\sum_q \P(\overline{A}_{l_1\rightarrow(m,i)}\in B_1,\overline{A}_{l_2\rightarrow(m,i)}\in B_2|\overline{N}=q)\P(\overline{N}=q).
\end{split}
\end{equation*}
Using Lemma \ref{lem_cond_indep_bern}, by conditional independence, we have
\begin{equation*}
\begin{split}
&\P(\overline{A}_{l_1\rightarrow(m,i)}\in B_1,\overline{A}_{l_2\rightarrow(m,i)}\in B_2)=\\
&\sum_q \P(\overline{A}_{l_1\rightarrow(m,i)}\in B_1|\overline{N}=q)\P(\overline{A}_{l_2\rightarrow(m,i)}\in B_2|\overline{N}=q)\P(\overline{N}=q).
\end{split}
\end{equation*}
Using the same reasoning as in Lemma \ref{lem_poisson_chenstein_bound}, we have that: 
\begin{equation*}
\P(\overline{A}_{l_1\rightarrow(m,i)}\in B_1|\overline{N}=q)\rightarrow \P(\tilde{A}_{l_1\rightarrow i}\in B_1|\overline{N}=q)=\P(\tilde{A}_{l_1\rightarrow i}\in B_1),
\end{equation*} 
\begin{equation*}
\P(\overline{A}_{l_2\rightarrow(m,i)}\in B_2|\overline{N}=q)\rightarrow \P(\tilde{A}_{l_2\rightarrow i}\in B_2|\overline{N}=q)=\P(\tilde{A}_{l_2\rightarrow i}\in B_2)
\end{equation*} and 
\begin{equation*}
\P(\overline{N}=q)\rightarrow\P(\overline{\tilde{N}}=q).
\end{equation*}
By dominated convergence,
\begin{equation*}
\begin{split}
\P(\overline{A}_{l_1\rightarrow(m,i)}\in B_1,\overline{A}_{l_2\rightarrow(m,i)}\in B_2)&\rightarrow \sum_q \P(\tilde{A}_{l_1\rightarrow i}\in B_1)\P(\tilde{A}_{l_2\rightarrow i}\in B_2)\P(\overline{N}=q)\\
&=\P(\tilde{A}_{l_1\rightarrow(m,i)}\in B_1,\tilde{A}_{l_2\rightarrow(m,i)}\in B_2).
\end{split}
\end{equation*}
This implies convergence of $\sum_j \mu_{j\rightarrow i}\overline{A}_{j\rightarrow(m,i)}$.
Finally, the mapping theorem implies convergence in total variation of $\lambda_{m,i}(t)$ when $M \rightarrow \infty$. All conditions of Lemma \ref{lem_tlln} are thus satisfied. Applying it completes the proof.
\end{proof}
Thus, we can now state the result that we were aiming to prove:
\begin{lemma}
Denote by $(N_{m,i})$ the point processes of the $M$-replica RMF dynamics \eqref{eq_RMF_LGL_SDE} that are the fixed point of $\Phi$. Then $\TLLN((N_{m,i}([0,T])))$ holds.
\end{lemma}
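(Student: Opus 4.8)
The plan is to carry $\TLLN$ from an explicitly chosen starting law all the way to the fixed point of $\Phi$, exploiting that $\TLLN$ is stable under $\Phi$ (Lemma~\ref{lem_tlln_Phi}) and that the iterates of $\Phi$ converge to the fixed point (Lemma~\ref{lem_iter_conv_cFIAP}). First, for each $M$ I would fix $\rho^{(M)}\in(\mathcal{P}(D_T))^{MK}$ to be the law of the stochastic intensities of $MK$ point processes that, for each fixed $i$, are i.i.d. in the replica index $n$ --- concretely homogeneous Poisson processes of a fixed rate, which satisfy Assumption~\ref{Ass_2} and are exchangeable. For such a family $\TLLN(\rho^{(M)})$ is immediate: \eqref{eq_TLLN_cont_2} is trivial because the law of $N_{n,i}([0,t])$ does not depend on $M$, and \eqref{eq_TLLN_cont} follows from the $\mathcal{L}^2$ law of large numbers and uniform integrability, exactly as in the proof of Lemma~\ref{lem_tlln}.

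Next I would check that $\Phi$ preserves both exchangeability in the replica index (the routing variables and the initial conditions are exchangeable and the reset term in \eqref{eq_phi_def} acts symmetrically across replicas) and finiteness of the exponential moments (the Grönwall argument behind Lemmas~\ref{lem_moment_bound} and \ref{lem_exp_moment_bound} applies to \eqref{eq_phi_def} with the input process frozen). Consequently Lemma~\ref{lem_tlln_Phi} applies at every stage, and by induction $\TLLN(\Phi^l(\rho^{(M)}))$ holds for all $l\ge 0$; write $\tilde X^{(l)}_i$ for the weak limit of $\Phi^l(\rho^{(M)})_{n,i}([0,t])$ as $M\to\infty$, which by Lemma~\ref{lem_cond_indep_bern} is in fact the common marginal of an i.i.d. family.

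The heart of the argument is the passage to the limit $l\to\infty$. By Lemma~\ref{lem_iter_conv_cFIAP}, $\Phi^l(\rho^{(M)})\to\mu^{(M)}$, where $\mu^{(M)}$ is the law of the genuine RMF dynamics \eqref{eq_RMF_LGL_SDE}. The point I would stress is that this convergence is \emph{uniform in $M$, coordinate by coordinate}: in Lemma~\ref{lem_sznitbound} the constant $C_T$ does not depend on $M$ (the factors $\tfrac1{M-1}$ from the routing indicators exactly compensate the sums $\sum_{n\neq m}$), while $K_{T,U}^{MK}(\Phi(\rho^{(M)}),\rho^{(M)})\le MK\,D$ for a constant $D$ furnished by the uniform moment bounds; combining this with \eqref{eq_cauchy_bound}, completeness, and the replica-exchangeability of every $\Phi^l(\rho^{(M)})$ gives $K_T(\Phi^l(\rho^{(M)})_{m,i},\mu^{(M)}_{m,i})\le\eta_l$ with $\eta_l\to 0$ independently of $M$ and $m$. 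Coupling the stochastic intensities through common Poisson embeddings, as in the proof of Lemma~\ref{lem_sznitbound}, one transfers this to $\E\big[\,\big|N^{(M)}_{m,i}([0,t])-N^{(M),l}_{m,i}([0,t])\big|\,\big]\le C\eta_l$, again uniformly in $M$, where $N^{(M),l}$ denotes point processes with intensity law $\Phi^l(\rho^{(M)})$. Both clauses of $\TLLN$ then transfer to $\mu^{(M)}$ by a three-$\eps$ argument: for \eqref{eq_TLLN_cont} one inserts $N^{(M),l}$, bounds the error by $2C\eta_l$, chooses $l$ large uniformly in $M$, and then lets $M\to\infty$ using $\TLLN(\Phi^l(\rho^{(M)}))$; for \eqref{eq_TLLN_cont_2} the sequence $\tilde X^{(l)}_i$ is Cauchy and its limit is identified, by the same interchange of limits, with the weak limit of $N^{(M)}_{m,i}([0,t])$. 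Alternatively, once \eqref{eq_TLLN_cont_2} is available at the fixed point, \eqref{eq_TLLN_cont} can be recovered directly from Lemma~\ref{lem_tlln} together with the exponential moment bound of Lemma~\ref{lem_exp_moment_bound}, which is stated for the true dynamics; taking $t=T$ gives the statement.

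I expect the passage to the limit to be the main obstacle. The two delicate points are, first, making certain that the contraction estimate of Lemma~\ref{lem_sznitbound} is genuinely $M$-free, so that the convergence of the iterates is uniform in the number of replicas; and second, transferring the estimates, which live at the level of the stochastic intensities and of the metrics $K_T$ and $K_{T,U}$, down to the counting variables $N_{m,i}([0,t])$ in terms of which $\TLLN$ is phrased --- since the Billingsley metric permits small time reparametrizations, a little care is needed near the fixed time $t$. Once these uniformities are in place, interchanging the limits $l\to\infty$ and $M\to\infty$ is the standard diagonal argument.
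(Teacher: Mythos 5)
Your proposal is correct and follows essentially the same route as the paper: seed the iteration with i.i.d.\ inputs for which $\TLLN$ trivially holds, propagate it through each application of $\Phi$ via Lemma~\ref{lem_tlln_Phi}, and pass to the fixed point via Lemma~\ref{lem_iter_conv_cFIAP}, justifying the interchange of the limits $l\to\infty$ and $M\to\infty$ by the uniform (in $M$) convergence of the iterates coming from the $M$-free contraction constant and the uniform moment bounds of Lemma~\ref{lem_moment_bound}. Your added care about transferring the $K_{T,U}$ estimates on intensities to the counting variables $N_{m,i}([0,T])$ via the common Poisson embeddings is a sound elaboration of a step the paper treats implicitly.
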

\begin{proof}
Let $(\overline{N_{n,j}})$ be random variables satisfying $\TLLN(\overline{N_{n,j}}).$ Let us first write out equalities and justify them afterwards. We have
\begin{equation*}
\begin{split}
&\lim_{M \rightarrow\infty}\frac{1}{M-1}\E\left[\left|\sum_{n \neq m}\left(\E[N_{n,j}([0,T])]-N_{n,j}([0,T])\right)\right|\right]\\
&=\lim_{M \rightarrow\infty}\frac{1}{M-1}\E\left[\left|\sum_{n \neq m}\left(\E[\lim_{l \rightarrow \infty}\Phi^l(\overline{N_{n,j}}([0,T]))]-\lim_{l \rightarrow \infty}\Phi^l(\overline{N_{n,j}}([0,T]))\right)\right|\right]\\
&=\lim_{M \rightarrow\infty}\lim_{l \rightarrow \infty}\frac{1}{M-1}\E\left[\left|\sum_{n \neq m}\left(\E[\Phi^l(\overline{N_{n,j}}([0,T]))]-\Phi^l(\overline{N_{n,j}}([0,T]))\right)\right|\right]\\
&=\lim_{l \rightarrow \infty}\lim_{M \rightarrow\infty}\frac{1}{M-1}\E\left[\left|\sum_{n \neq m}\left(\E[\Phi^l(\overline{N_{n,j}}([0,T]))]-\Phi^l(\overline{N_{n,j}}([0,T]))\right)\right|\right]\\
&=0.
\end{split}
\end{equation*}
The first equality is given by Lemma \ref{lem_iter_conv_cFIAP}. To obtain the second equality, we use the dominated convergence theorem and the fact that all moments all uniformly bounded through Lemma \ref{lem_moment_bound} (note that initial conditions are fixed in the definition of $\Phi$ and are chosen to verify Assumption \ref{Ass_2}).
To justify the third equality, note that from \eqref{eq_cauchy_bound}, using Lemma \ref{lem_moment_bound} to obtain again a uniform bound of the moments, we get that the Cauchy sequence of iterates of $\Phi$ verifies the uniform Cauchy criterion and thus converges uniformly to the fixed point, which in turn allows for the exchange of limits in $M$ and $l$.
The last equality stems directly from Lemma \ref{lem_tlln_Phi}.
\end{proof}
\section{Tightness and convergence on \texorpdfstring{$\R$}{R}}
\label{sec_Rel_comp_RMF_LGL}

The goal of this section is to generalize the main result of the paper. In Theorem \ref{th_Poisson_CV}, we proved weak convergence of the replica-mean-field processes on compacts of $\R^+$. We now prove weak convergence on $\R^+$. One motivation for doing so is that the results on the Galves-Löcherbach replica-mean-field model in the paper \cite{Baccelli_2019} by Baccelli and Taillefumier assumes that the Poisson Hypothesis holds at the limit in the stationary regime. As such, the following result provides the missing rigorous justification, albeit for a slightly simplified model due to Assumption \ref{Ass_1}.

\begin{theorem}
\label{Th_rel_comp_RMF_LGL}
Let $K,M \geq 2.$ For all $m \in \{1,...,M\}$ and $i \in \{1,...,K\}$, the process $\lambda_{m,i}$ weakly converges in the Skorokhod space of càdlàg functions on $\R^+$.
\end{theorem}

Recall the following tightness criterion due to Aldous \cite{Aldous78}:

\begin{Prop}
Let $(\Omega, \mathcal{F}, (\mathcal{F}_t^{(n)}, t \in \mathbb{R}^+), \P)$ be a probability space, let $X^{(n)}$ be adapted càdlàg processes. If for all $T>0, (\mathcal{L}(||X^{(n)}||))$ is tight on $[0,T]$, and if for all $ \epsilon >0$, for all $\epsilon ' >0$, there exists $\delta\in (0,T]$ such that 
\begin{equation*}
\limsup_{n\rightarrow +\infty} \sup_{ (\substack{S_1,S_2 \in \mathcal{F}_t^{(n)}) \text{ such that} \\ S_1 \leq S_2 \leq (S_1+\delta) \wedge T}} \P(|X_{S_1}^{(n)}-X_{S_2}^{(n)}| > \epsilon ') \leq \epsilon,
\end{equation*}
then $(\mathcal{L}(X^{(n)}))$ is tight on the space of càdlàg functions on $\R^+$.
\end{Prop}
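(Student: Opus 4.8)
This is the classical tightness criterion of Aldous \cite{Aldous78}, so the plan is to recall the architecture of its proof. The first move is to reduce the conclusion to the two standard ingredients for tightness in the Skorokhod space. By Prokhorov's theorem together with the relative-compactness criterion for càdlàg paths (see \cite{billingsley1968}), a family of laws on $D(\R^+)$ is tight if and only if, for every $T>0$, one has (a) $\lim_{a\to\infty}\sup_n\P(\|X^{(n)}\|>a)=0$ on $[0,T]$, and (b) for every $\epsilon>0$, $\lim_{\delta\downarrow 0}\limsup_{n\to\infty}\P\big(w'_{X^{(n)},T}(\delta)>\epsilon\big)=0$, where $w'_{x,T}(\delta)$ is Billingsley's càdlàg modulus, namely the infimum of $\max_i\sup_{s,t\in[t_{i-1},t_i)}|x(s)-x(t)|$ over partitions $0=t_0<\dots<t_v=T$ with $\min_i(t_i-t_{i-1})>\delta$. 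The passage from $[0,T]$ to $\R^+$ is routine since the topology of $D(\R^+)$ is generated by the restrictions to the intervals $[0,T]$; I would dispose of it first and keep $T$ fixed thereafter. Ingredient (a) is exactly the first hypothesis, so the whole task is to derive (b) from the stopping-time hypothesis.

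For (b), the relevant tool is the Aldous lemma relating $w'$ to the oscillation of the process sampled at stopping times. Fixing $\epsilon$ and $T$, I would introduce the successive $\epsilon$-oscillation stopping times $R_0^{(n)}=0$ and $R_{k+1}^{(n)}=\inf\{t>R_k^{(n)}:|X^{(n)}_t-X^{(n)}_{R_k^{(n)}}|\ge\epsilon\}\wedge T$, together with $N^{(n)}=\#\{k:R_k^{(n)}<T\}$, which is almost surely finite because a càdlàg path has only finitely many $\epsilon$-oscillations on a compact interval. The key deterministic observation is that on each cell $[R_k^{(n)},R_{k+1}^{(n)})$ the path stays within $\epsilon$ of $X^{(n)}_{R_k^{(n)}}$, so if the partition $\{R_k^{(n)}\}_k\cup\{T\}$ is $\delta$-sparse then $w'_{X^{(n)},T}(\delta)\le 2\epsilon$; consequently $\{w'_{X^{(n)},T}(\delta)>2\epsilon\}$ forces either a consecutive gap $R_{k+1}^{(n)}-R_k^{(n)}\le\delta$ with $R_k^{(n)}<T$, or a short last cell $T-R_{N^{(n)}}^{(n)}\le\delta$. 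On an event of the first kind, $S_1=R_k^{(n)}$ and $S_2=R_{k+1}^{(n)}\wedge(R_k^{(n)}+\delta)\wedge T$ are $(\mathcal F^{(n)}_t)$-stopping times with $S_1\le S_2\le(S_1+\delta)\wedge T$ and, by right-continuity, $|X^{(n)}_{S_1}-X^{(n)}_{S_2}|\ge\epsilon$; hence the stopping-time hypothesis, applied at a level $\epsilon'<\epsilon$, makes $\limsup_n\P\big(R_k^{(n)}<T,\ R_{k+1}^{(n)}-R_k^{(n)}\le\delta\big)$ as small as we wish once $\delta$ is small, while the boundary event is absorbed using hypothesis (a). One also needs a handle on $N^{(n)}$: since the gaps sum to at most $T$, at most $\lceil T/\delta\rceil$ of them can exceed $\delta$, so $\P(N^{(n)}\ge m)$ is again estimated through the same stopping-time probabilities.

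The step I expect to be the genuine obstacle is turning these per-event estimates into the statement $\lim_{\delta\downarrow 0}\limsup_n\P(w'_{X^{(n)},T}(\delta)>2\epsilon)=0$: a crude union bound over the (up to roughly $T/\delta$) cells multiplies the stopping-time probability by the large factor $T/\delta$, which the bare hypothesis does not obviously beat, so hypothesis 2 cannot be invoked cell by cell. Circumventing this is precisely the content of Aldous's theorem. The remedy is to replace $w'$ by the finer ``$V$-shaped'' modulus $w''_{x,T}(\delta)=\sup\{\min(|x(u)-x(s)|,|x(t)-x(u)|):s\le u\le t\le s+\delta\}$, which is comparable to $w'$ through Billingsley's deterministic inequalities $w''_{x,T}(\delta)\le w'_{x,T}(\delta)\le 2\,w''_{x,T}(2\delta)+(\text{jumps near }0\text{ and }T)$, and to carry out a sharper localization of the oscillation times; for this I would follow \cite{Aldous78} (see also \cite{billingsley1968}). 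Once (b) is established, (a) and (b) together with Prokhorov yield tightness of $(\mathcal L(X^{(n)}))$ on every $D([0,T])$, hence on the space of càdlàg functions on $\R^+$, which is the claim.
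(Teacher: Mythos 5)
The paper does not actually prove this Proposition: it is recalled verbatim as a known criterion and attributed to \cite{Aldous78}, so there is no internal proof to compare against. Judged on its own terms, your sketch follows the classical architecture (reduce tightness on $D(\R^+)$ to the compact intervals $[0,T]$, then to boundedness in probability plus control of Billingsley's modulus $w'$, then attack $w'$ via the successive $\epsilon$-oscillation stopping times $R_k^{(n)}$), and every step you actually carry out is correct. But the proof is incomplete at exactly the point you flag: deriving $\lim_{\delta\downarrow 0}\limsup_n\P(w'_{X^{(n)},T}(\delta)>2\epsilon)=0$ from the two-stopping-time hypothesis is the entire content of Aldous's theorem, and you resolve it by citing \cite{Aldous78} and \cite{billingsley1968}. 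Since the statement being proved \emph{is} Aldous's theorem, that deferral is circular as a standalone argument, even if it mirrors the paper's own decision to quote the result.

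The missing idea is more elementary than the $w''$ detour you propose, and it fits the stopping-time machinery you already set up: the $T/\delta$ union-bound loss is beaten by a two-stage choice of $\delta$. First apply the hypothesis at level $\eta$ to get some $\delta_0$ with $\P(R_{k+1}^{(n)}-R_k^{(n)}\le\delta_0,\ R_{k+1}^{(n)}\le T)\le\eta$ for every $k$ and large $n$ (taking $S_1=R_k^{(n)}$, $S_2=R_{k+1}^{(n)}\wedge(R_k^{(n)}+\delta_0)\wedge T$). If $K\ge 2T/\delta_0$ and $R_K^{(n)}\le T$, then since the $K$ gaps sum to at most $T$, at least $K/2$ of them are $\le\delta_0$; hence $\tfrac{K}{2}\,\P(R_K^{(n)}\le T)\le\sum_{k<K}\P(R_{k+1}^{(n)}-R_k^{(n)}\le\delta_0,\ R_{k+1}^{(n)}\le T)\le K\eta$, so $\P(R_K^{(n)}\le T)\le 2\eta$ with $K$ now \emph{fixed}, depending only on $\eta$, $\epsilon$, $T$. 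Second, apply the hypothesis again at level $\eta/K$ to obtain a smaller $\delta_1$, and bound $\P(w'_{X^{(n)},T}(\delta_1)>2\epsilon)$ by $\P(R_K^{(n)}\le T)$ plus a union bound over the at most $K$ relevant gaps, giving $2\eta+K\cdot(\eta/K)=3\eta$; the endpoint cell is handled in the same way. With this counting argument inserted, your outline becomes a complete proof without invoking the comparison between $w'$ and $w''$.
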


We will also require the following inequality on martingales:
\begin{lemma}
\label{lem_nonneg_mart}
Let $(X_t)_{t\in [0,T]}$ be a nonnegative $(\mathcal{F}_t)$-martingale, let $S_1,S_2$ be two stopping times such that $S_1 \leq S_2 \leq S_1+\delta$. Then
\begin{equation}
\label{eq_mart_major}
\E\left[\int_{S_1}^{S_2}X_s \diff s\right] \leq 2\delta \sqrt{\E[X_T^2]}.
\end{equation}
\end{lemma}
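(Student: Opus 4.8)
The plan is to reduce \eqref{eq_mart_major} to Doob's $L^2$ maximal inequality, exploiting nonnegativity of $X$ to pull the running supremum out of the time integral. We may assume $\E[X_T^2]<\infty$, since otherwise the right-hand side of \eqref{eq_mart_major} is infinite and there is nothing to prove; we also read the hypotheses as $0\le S_1\le S_2\le T$ with $S_2-S_1\le\delta$, which is the case needed for the Aldous criterion (where $S_2\le(S_1+\delta)\wedge T$) and the only case in which $\int_{S_1}^{S_2}X_s\diff s$ is determined by $(X_t)_{t\in[0,T]}$.

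First I would record the pointwise bound
\begin{equation*}
\int_{S_1}^{S_2}X_s\diff s\ \le\ (S_2-S_1)\sup_{s\in[0,T]}X_s\ \le\ \delta\sup_{s\in[0,T]}X_s,
\end{equation*}
which follows at once from $X\ge 0$ and $0\le S_2-S_1\le\delta$. This is the step where nonnegativity is genuinely used, and it is what lets us avoid any delicate optional-sampling or measurability discussion for the random interval $[S_1,S_2]$.

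Next I would bound the expectation of the maximum. Since $X$ is a nonnegative martingale, it is in particular a nonnegative submartingale, so Doob's $L^2$ maximal inequality yields $\bigl\|\sup_{s\in[0,T]}X_s\bigr\|_2\le 2\|X_T\|_2$, hence by Jensen (equivalently Cauchy--Schwarz) $\E\bigl[\sup_{s\in[0,T]}X_s\bigr]\le\sqrt{\E\bigl[\sup_{s\in[0,T]}X_s^2\bigr]}\le 2\sqrt{\E[X_T^2]}$. Taking expectations in the pointwise bound and combining gives
\begin{equation*}
\E\left[\int_{S_1}^{S_2}X_s\diff s\right]\ \le\ \delta\,\E\left[\sup_{s\in[0,T]}X_s\right]\ \le\ 2\delta\sqrt{\E[X_T^2]},
\end{equation*}
which is \eqref{eq_mart_major}.

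There is no real obstacle here; the only items meriting attention are checking that the hypotheses indeed locate $S_1,S_2$ in $[0,T]$ and invoking Doob's inequality in the correct form. I would add the remark that the constant $2$ is not sharp: if the filtration is right-continuous then $\{S_1\le s\le S_2\}\in\mathcal{F}_s$, and Fubini's theorem together with the martingale property gives $\E\bigl[\int_{S_1}^{S_2}X_s\diff s\bigr]=\int_0^T\E\bigl[X_s\one_{\{S_1\le s\le S_2\}}\bigr]\diff s=\int_0^T\E\bigl[X_T\one_{\{S_1\le s\le S_2\}}\bigr]\diff s=\E[X_T(S_2-S_1)]\le\delta\sqrt{\E[X_T^2]}$; but the weaker form with the factor $2$ is all that is needed downstream and requires no assumption on the filtration.
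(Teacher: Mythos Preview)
Your proof is correct and follows exactly the same route as the paper: bound the integral pointwise by $\delta\sup_{s\in[0,T]}X_s$ using nonnegativity, then apply Cauchy--Schwarz and Doob's $L^2$ maximal inequality. Your additional remarks (on the implicit assumption $S_2\le T$ and the sharper constant via Fubini and the martingale property) are sound but go beyond what the paper records.
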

\begin{proof}
We have
\begin{equation*}
\begin{split}
\E\left[\int_{S_1}^{S_2}X_s \diff s\right] &\leq \delta\E[\sup_{s\in [0,T]} X_s] \\
&\leq \delta \sqrt{\E[\sup_{s\in [0,T]} X_s^2]} \\
&\leq 2 \delta \sqrt{\E[X_T^2]} \textrm{ (by the Doob inequality) }
\end{split}
\end{equation*}
\end{proof}
We are now ready to prove Theorem \ref{Th_rel_comp_RMF_LGL}.

\begin{proof}
We use the Aldous criterion mentioned above to prove tightness.
Let $T>0$.
Let $\epsilon '>0, \delta >0, S_1,S_2$ two stopping times such that $S_1 \leq S_2 \leq (S_1+\delta) \wedge T$.

Using the Markov inequality, the fact that $r_i-\lambda_{m,i}$ is negative, and the property of stochastic intensity, we have:
\begin{equation*}
\begin{split}
&\P\left(|\lambda_{m,i}(S_2)-\lambda_{m,i}(S_1)|>\epsilon'\right) =\\
&\P\left(|\sum_{j \neq i}\mu_{j\rightarrow i}\sum_{n\neq m}\int_{S_1}^{S_2} \one_{\{V_{(n,j)\rightarrow i}(s)=m\}} N_{n,j}(\diff s)+\int_{S_1}^{S_2} \left(r_i-\lambda_{m,i}(s)\right) N_{m,i}(\diff s)|>\epsilon'\right) \\
 &\leq \frac{1}{\epsilon'}\E\left[\sum_{j \neq i}\mu_{j\rightarrow i}\sum_{n\neq m}\int_{S_1}^{S_2}\one_{\{V_{(n,j)\rightarrow i}(s)=m\}} N_{n,j}(\diff s)+\int_{S_1}^{S_2}\left(\lambda_{m,i}(s)-r_i\right)  N_{m,i}(\diff s) \right]\\
 &\leq \frac{1}{\epsilon'}\E\left[\int_{S_1}^{S_2}\sum_{n\neq m}\sum_{j \neq i}\mu_{j\rightarrow i}\one_{\{V_{(n,j)\rightarrow i}(s)=m\}}\lambda_{n,j}(s)+\left(\lambda_{m,i}^2(s)-r_i\lambda_{m,i}(s)\right)\diff s\right] \\
 &\leq \frac{1}{\epsilon'}\E\left[\int_{S_1}^{S_2}\sum_{j \neq i}\mu_{j\rightarrow i}\lambda_{1,j}(s)+\lambda_{m,i}^2(s)-r_i\lambda_{m,i}(s) \diff s\right],\\
\end{split}
\end{equation*}

Since $\lambda_{m,i}(s)$ is non-negative for all $s \in [0,T]$, we can write
\begin{equation*}
\P\left(|\lambda_{m,i}(S_2)-\lambda_{m,i}(S_1)|>\epsilon'\right)\leq \frac{1}{\epsilon'}\E\left[\int_{S_1}^{S_2}\sum_{j \neq i}\mu_{j\rightarrow i}\lambda_{1,j}(s)+\lambda_{m,i}^2(s)\diff s\right].
\end{equation*}
For all $(m,i)\in \{1,...,M\}\times\{1,...,K\}$, let 
\begin{equation*}
c_{m,i}(t)=\sum_{n \neq m} \sum_{j \neq i} \int_0^t \one_{\{V_{(n,j)\rightarrow i}(s)=m\}} \lambda_{n,j}(s) \diff s+\int_0^t \left(r_i-\lambda_{m,i}(s)\right) \lambda_{m,i}(s) \diff s,
\end{equation*}
and
\begin{equation*}
\begin{split}
d_{m,i}(t)&=\sum_{n \neq m} \sum_{j \neq i} \int_0^t \one_{\{V_{(n,j)\rightarrow i}(s)=m\}} \left(2\lambda_{n,j}(s)+1\right)\lambda_{n,j}(s) \diff s\\
&+\int_0^t \left(r_i^2-\lambda_{m,i}^2(s)\right)\lambda_{m,i}(s) \diff s.
\end{split}
\end{equation*} 
By the property of stochastic intensity, $\rho_{m,i}(t)=\lambda_{m,i}(t)-c_{m,i}(t)$ and $\nu_{m,i}(t)=\lambda_{m,i}^2(t)-d_{m,i}(t)$
are $(\mathcal{F}_t)$-martingales.

Therefore, we can write
\begin{equation}
\label{eq_aldous_bound_1}
\begin{split}
&\P\left(|\lambda_{m,i}(S_2)-\lambda_{m,i}(S_1)|>\epsilon'\right) \leq\\
&\frac{1}{\epsilon'}\E\left[\int_{S_1}^{S_2}\sum_{j\neq i}\mu_{j\rightarrow i}\rho_{1,j}(s)+\nu_{m,i}(s)+\sum_{j \neq i}\mu_{j\rightarrow i}c_{1,j}(s)+d_{m,i}(s) \diff s\right].
\end{split}
\end{equation}

We now bound separately the martingale part and the rest of the right-hand-side expression in \eqref{eq_aldous_bound_1}. 

Using Lemma \ref{lem_nonneg_mart}, we have that
\begin{equation*}
\E\left[\int_{S_1}^{S_2}\left(\sum_{j\neq i}\mu_{j\rightarrow i}\rho_{1,j}(s)+\nu_{m,i}(s)\right)\diff s\right]\leq 2\delta\sqrt{\E[(\sum_{j\neq i}\mu_{j\rightarrow i}\rho_{1,j}(T)+\nu_{m,i}(T))^{2}]}.
\end{equation*}

Therefore, using the Cauchy-Schwarz inequality and Lemma \ref{lem_moment_bound}, there exists a nonnegative constant $Q_1(T)$ such that
\begin{equation}
\label{eq_mart_bound_final}
\E\left[\int_{S_1}^{S_2}\left(\sum_{j\neq i}\mu_{j\rightarrow i}\rho_{1,j}(s)+\nu_{m,i}(s)\right)\diff s\right]\leq \delta Q_1(T).
\end{equation}

Now, let us bound the rest of the right-hand side term in \eqref{eq_aldous_bound_1}.

We have, using Fubini's theorem for the first equality and the fact that the term under the integral is nonnegative for the upper bound,
\begin{equation*}
\begin{split}
&\E\left[\int_{S_1}^{S_2}\left( \int_0^t \sum_{n \neq m}\sum_{j \neq i}\mu_{j\rightarrow i}\one_{\{V_{(n,j)\rightarrow i}(s)=m\}} \lambda_{n,j}(s) \diff s +\left(r_i-\lambda_{m,i}(s)\right) \lambda_{m,i}(s) \diff s \right) \diff t\right] \\
&=\E\bigg[\int_{0}^{S_2}\left(S_2-(S_1\vee s)\right)\bigg( \sum_{n \neq m}\sum_{j \neq i}\mu_{j\rightarrow i}\one_{\{V_{(n,j)\rightarrow i}(s)=m\}} \lambda_{n,j}(s) \diff s\\
&+\left(r_i-\lambda_{m,i}(s)\right) \lambda_{m,i}(s) \bigg)\diff s\bigg]\\
&\leq \delta \int_{0}^{T}\E\left[\sum_{j \neq i}\mu_{j\rightarrow i}\lambda_{1,j}(s)+\lambda_{m,i}^{2}(s)-r_i\lambda_{m,i}(s)\right] \diff s.
\end{split}
\end{equation*}

Therefore, using Lemma \ref{lem_moment_bound}, we have the existence of a constant $Q_2(T)>0$ such that for all $(n,j)\in \{1,...,M\}\times\{1,...,K\}$,
\begin{equation*}
\E\left[\int_{S_1}^{S_2} c_{n,j}(s)\diff s\right] \leq \delta Q_2(T).
\end{equation*}
Similarly, we obtain the existence of a constant $Q_3(T)>0$ such that for all $(n,j)\in \{1,...,M\}\times\{1,...,K\}$,
\begin{equation*}
\E\left[\int_{S_1}^{S_2}d_{n,j}(s)\diff s\right] \leq \delta Q_3(T).
\end{equation*}
Combining the two previous bounds, we have the existence of a nonnegative constant $Q_4(T)$ such that
\begin{equation}
\label{eq_comp_bound_final}
\E\left[\int_{S_1}^{S_2}\left(\sum_{j \neq i}\mu_{j\rightarrow i}c_{1,j}(s)+d_{m,i}(s)\right)\diff s\right] \leq \delta Q_4(T).
\end{equation}

Finally, combining \eqref{eq_mart_bound_final} and \eqref{eq_comp_bound_final} in \eqref{eq_aldous_bound_1}, we obtain
\begin{equation*}
\P\left(|\lambda_{m,i}(S_2)-\lambda_{m,i}(S_1)|>\epsilon'\right) \leq \frac{\delta (Q_1(T)+Q_4(T))}{\epsilon'}.
\end{equation*}

Therefore, we can choose $\delta$ so that 
\begin{equation*}
\limsup_{N\rightarrow +\infty} \sup_{\substack{S_1,S_2 (\mathcal{F}_t^{(n)})\text{ s.t.} \\ S_1 \leq S_2 \leq (S_1+\delta) \wedge T}} \P(|\lambda_{m,i}^{N}(S_1)-\lambda_{m,i}^{N}(S_2)| > \epsilon ') \leq \epsilon.
\end{equation*}
This proves the second condition of the Aldous criterion.

For the first condition, for $t \in [0,T],$ let
\begin{equation*}
\begin{split}
G(t)&=\sum_{n \neq m}\sum_{j \neq i}\mu_{j\rightarrow i} \int_0^t \one_{\{V_{(n,j)\rightarrow i}(s)=m\}}(N_{n,j}(\diff s)-\lambda_{n,j}(s)\diff s)\\
&+\int_0^t(\lambda_{m,i}(s)-r_i) (N_{m,i}(\diff s)-\lambda_{m,i}(s)\diff s)
\end{split}
\end{equation*}
and 
\begin{equation*}
H(t)=\sum_{n \neq m}\sum_{j \neq i}\mu_{j\rightarrow i} \int_0^t \one_{\{V_{(n,j)\rightarrow i}(s)=m\}}\lambda_{n,j}(s)\diff s +\int_0^t(\lambda_{m,i}(s)-r_i) \lambda_{m,i}(s)\diff s.
\end{equation*}
As $G$ is a martingale, by Doob's inequality and by Lemma \ref{lem_moment_bound}, there exists a constant $K_1(T)$ such that
\begin{equation*}
\E\left[\sup_{t\in [0,T]}\left|G(t)\right|\right]\leq 4\E\left[\left|G(T)\right|^2\right] \leq K_1(T).
\end{equation*}

Moreover,since all terms under the integrals in $H(t)$ are non-negative, $H(t)$ is non-decreasing in $t$, so by Lemma \ref{lem_moment_bound}, there exists a constant $K_2(T)$ such that
\begin{equation*}
\E\left[\sup_{t \in [0,T]}\left|H(t)\right|\right]\leq \E[H(T)] \leq K_2(T).
\end{equation*}
By the triangular inequality,
\begin{equation*}
\begin{split}
\E\left[\sup_{[0,T]}\left|\lambda_{m,i}(t)\right|\right] &\leq \E\left[\sup_{[0,T]}\left|G(t)\right|\right]+\E\left[\sup_{[0,T]}\left|H(t)\right|\right] \\ 
 &\leq K_1(T)+K_2(T).
\end{split}
\end{equation*}

Thus, for all $M\geq 2$, if $\kappa>0$, by the Markov inequality,
\begin{equation*}
\begin{split}
\P\left(||\lambda^M_{m,i}||_{\infty}>\kappa\right) &\leq \frac{1}{\kappa}\E\left[\|\lambda^M_{m,i}\|_{\infty}\right] \\
&\leq \frac{K_1(T)+K_2(T)}{\kappa},
\end{split}
\end{equation*}
so for all $\epsilon>0$, there exists $\kappa>0$ such that for all $M \geq 2$,
\begin{equation*}
\P(\|\lambda_{m,i}^M\|_{\infty}>\kappa)<\epsilon,
\end{equation*}
which proves the first condition of the Aldous criterion.

Thus, both conditions of the Aldous criterion are verified, and the set of processes \\
$(\lambda^M_{m,i})_{(m,i)\in \{1,...,M\}\times\{1,...,K\}}$ is tight in the space of càdlàg functions on $\R^+$. Combining it with statement 4 of Theorem \ref{th_Poisson_CV} yields the result.
\end{proof} 

\section*{Acknowledgments}
The author would like to thank François Baccelli for his guidance and suggestions. The author would like to thank Sergey Foss, Eva Löcherbach and Gianluca Torrisi for feedback and suggestions.

\section*{Funding}
The author was supported by the ERC NEMO grant (\# 788851) to INRIA Paris.
\newpage
\bibliography{biblio}
\bibliographystyle{abbrv}

\end{document}